\newcommand{\C}{\mathbb{C}}
\newtheorem{theorem}{Theorem}[section]
\newtheorem{lemma}[theorem]{Lemma}
\newtheorem{proposition}[theorem]{Proposition}
\newtheorem{corollary}[theorem]{Corollary}
\newtheorem{remark}[theorem]{Remark}
\newtheorem{hypothesis}[theorem]{Hypothesis}
\newcommand{\tb}[1]{\textcolor{blue}{#1}}
\newcommand{\eps}{\varepsilon}
\newcommand{\E}{\mathbb E}\newcommand{\Pm}{\mathbb P}
\newcommand{\Rm}{\mathbb R}\newcommand{\Nm}{\mathbb N}
\newcommand{\dint}{\displaystyle\int}
\newcommand{\mQ}{\mathcal Q}
\newcommand{\tps}{\mathrm{t}}
\newcommand{\aver}[1]{\langle {#1} \rangle}
\newcommand{\dsum}{\displaystyle\sum}
\newcommand{\sff}{{\mathsf f}}
\newcommand{\sk}{{\mathsf k}}
\newcommand{\sm}{{\mathsf m}}
\newcommand{\sn}{{\mathsf n}}
\newcommand{\sC}{{\mathsf C}}
\newcommand{\sL}{{\mathsf L}}
\newcommand{\sM}{{\mathsf M}}
\newcommand{\sR}{{\mathsf R}}
\newcommand{\sS}{{\mathsf S}}
\newcommand{\sB}{{\mathsf B}}
\newcommand{\sth}{{\vartheta}}
\title{Complex Gaussianity of long-distance random wave processes }
\author{Guillaume Bal \thanks{Departments of Statistics and Mathematics and Committee on Computational and Applied Mathematics, University of Chicago, Chicago, IL 60637; guillaumebal@uchicago.edu} \and Anjali Nair \thanks{Department of Statistics and Committee on Computational and Applied Mathematics, University of Chicago, Chicago, IL 60637; anjalinair@uchicago.edu}}
\date{\today}
\begin{document}

\maketitle

\begin{abstract}
   Interference of randomly scattered classical waves naturally leads to familiar speckle patterns, where the wave intensity follows an exponential distribution while the wave field itself is described by a circularly symmetric complex normal distribution. In the It\^o-Schr\"odinger paraxial model of wave beam propagation, we demonstrate how a deterministic incident beam transitions to such a fully developed speckle pattern over long distances in the so-called scintillation (weak-coupling) regime.
\end{abstract}

\noindent{\bf Keywords:} Wave propagation in random media; Speckle pattern; It\^o-Schr\"odinger regime; Gaussian summation rule conjecture.

\section{Introduction}
\label{sec:intro}

In a well-known paradigm of long-distance wave propagation in random media, the wave field is described as a random superposition of plane waves with Gaussian distributed amplitudes and uniformly distributed phases \cite{goodman1976some},\cite[Chapter 11]{sheng1990scattering}. Such a wave field is then {\em circularly symmetric complex normal (Gaussian)}, with independent zero-mean normal real and imaginary parts. As a consequence, the wave energy density is distributed according to an exponential law and the {\em scintillation index}, defined as the normalized variance of the energy density with respect to its squared mean, is unity. Wave fields at nearby points are also often observed to be essentially independent. This regime provides a model for the {\em speckle formation} observed in many experiments of classical wave propagation in heterogeneous media \cite{andrews2001laser,carminati2021principles,furutsu1972statistical,goodman1976some,sheng1990scattering}. While fairly intuitive and  well accepted in the physical literature \cite{furutsu1972statistical,valley1976application,yakushkin1978moments},\cite[Chapter 20]{ishimaru1978wave}, this conjecture is not entirely supported by any mathematical derivation. The main objective of this paper is to provide a complete derivation in the  {\em diffusive regime of the scintillation scaling} for an It\^o-Schr\"odinger paraxial wave model.

While a complete understanding of wave fields propagating in reasonably arbitrary  random media remains essentially out of reach, much progress has been made in the setting of beam propagation. In this setting, time-harmonic waves propagate primarily along a privileged direction while scattering from interactions with the underlying medium occurs only in the transverse {\color{black} and forward} directions. This is the {\em paraxial regime} of wave propagation, which has been derived mathematically from time-harmonic scalar wave models in a number of situations \cite{bailly1996parabolic, garnier2009coupled}. {\color{black} In the homogeneous setting, the paraxial approximation applies when the wavelength ($\lambda$) is much smaller than the beam width $(w_0)$ and the propagation distance is at most of order $\frac{w_0^2}{\lambda}$, called the Rayleigh length.} 
{\color{black} Further assuming that correlation length of the underlying medium is larger than the wavelength while smaller than the overall distance of propagation, the paraxial regime may be modeled by a random wave process satisfying an {\em It\^o-Schr\"odinger equation}}. This approximation was justified rigorously  \cite{bailly1996parabolic, garnier2009coupled} and the theory of the random wave process developed in \cite{dawson1984random}. 
A major technical advantage of the It\^o-Schr\"odinger model is the availability of closed-form partial differential equations for the statistical moments of the random wave process.

A direct analysis of the large-distance behavior of solutions to the It\^o-Schr\"odinger model also seems out of range. In fact, different asymptotic regimes may be considered leading to different statistical limits for the wave field. For instance, in the so-called spot dancing regime \cite{dawson1984random,furutsu1973spot}, the wave fields are not circularly Gaussian asymptotically but are rather shown to satisfy a Rice-Nakagami distribution for Gaussian incident conditions for the propagating beam. We will consider specifically the {\em scintillation scaling}, also referred to as the weak-coupling scaling, in which essentially all limiting models of wave propagation such as, e.g., radiative transfer or Fokker-Planck, are derived \cite{BKR-KRM-10}. It consists in assuming that the effects of the fluctuations are weak $\epsilon\ll1$ at the microscopic scale but integrated over long (adimensionalized) distances of propagation $z$ at least of order $\epsilon^{-1}$. In the scintillation (or weak-coupling) scaling, we consider the {\em kinetic} regime, where the distance of propagation $z\epsilon\approx 1$, and the {\em diffusive} regime, where $z\epsilon \gg 1$. Under natural assumptions on the power spectrum of the random medium and appropriate assumptions on the width of the propagating beam in the plane $z=0$, we show that in this latter regime displaying fully developed speckle, the wave process is indeed circular Gaussian with a scintillation function (normalized variance of the process) uniformly equal to unity. We also obtain in this regime that the expectation of the energy density (the irradiance) solves a diffusion equation in an appropriate set of variables. If we identify time with distance of propagation $z$ along the beam, then the mean distance in the transverse variables $\langle x\rangle \approx z^{\frac32}$ follows a very anomalous diffusion simply reflecting a faster-than-linear beam spreading.

\medskip

The field of wave propagation in (random) heterogeneous media has received considerable attention over the past decades. Meaningful macroscopic descriptions of wave fields appear for several weak-coupling, long-distance propagation regimes. In such regimes, the wave field itself is in general highly oscillatory and the main object of interest involves higher-order correlations of the field. In the radiative transfer regime, {\color{black} obtained when the medium correlation length and the typical wavelength of the wave field are comparable,} the main object of interest is {\color{black} the evolution of a} phase-space energy density. Its convergence to the deterministic solution of a linear Boltzmann equation is proved rigorously using diagramatic expansions in \cite{erdHos2000linear,LS-ARMA-07,Spohn}; see also \cite{RPK-WM,B-WM-05} for a formal derivation based on two-scale asymptotic expansions and \cite{sheng1990scattering} for formal ladder diagram expansions. {\color{black} The diffusive scaling in the radiative transfer regime of the random Schr\"{o}dinger equation  has been treated in \cite{erdHos2008quantum,hernandez2024quantum}.}  When the correlation length of the random medium is significantly larger than the typical wavelength, then the kinetic limit takes the form of a Fokker-Planck equation \cite{BKR-liouv} instead. Several results of convergence are presented in the review papers \cite{BKR-KRM-10,garnier2018multiscale}. These limiting deterministic models hold in sufficiently high dimension.  Wave propagation in {\color{black} one-dimensional or} randomly layered media is different, even in the weak-coupling regime because of Anderson localization. We refer to \cite{fouque2007wave} for results in this setting. %%% \an{These results are mainly for the time dependent Schroedinger equation, not the paraxial approximation.}

A direct macroscopic analysis of the wave field, rather than phase-space energy densities, may be performed in the paraxial approximation \cite{bal2011asymptotics}. It is shown there that the wave field, written in dual Fourier variables and after appropriate phase compensation modeling propagation in a homogeneous medium, may be decomposed in the weak-coupling regime as a known deterministic component plus a mean-zero circular Gaussian variable. The derivation of the result also relies on a fair amount of combinatorial techniques to capture the interaction of the wave field with the underlying heterogeneous environment. In the It\^o-Schr\"odinger approximation of the paraxial regime, these results have recently been improved in \cite{gu2021gaussian} to provide a refined estimate of the statistical dependence of wave field at nearby wavenumbers. The analysis in the latter paper leverages the martingale structure of the random wave process developed in \cite{dawson1984random}. 

The results in \cite{bal2011asymptotics,gu2021gaussian} are obtained in the aforementioned kinetic regime of the scintillation scaling and for relatively narrow beams. They hold for wave fields in the Fourier domain and, while they provide information on the statistical stability of a properly averaged phase space energy density, it is not immediate to devise results for wave field in the physical variables where speckle is observed. It is in fact not clear that such results hold for narrow beams, or that they extend to the long-propagation setting of the diffusive regime.

\medskip

An analysis of broad beams in the kinetic regime of the scintillation scaling of the time-harmonic It\^o-Schr\"odinger model is presented in a series of papers \cite{garnier2014scintillation,garnier2016fourth,garnier2018noninvasive,garnier2022scintillation}, where it is shown that the statistical moments of order up to four of the random wave process in the physical variables are indeed consistent with the circular Gaussian conjecture. This is based on a careful analysis of the closed-form partial differential equations satisfied by the statistical moments of the random wave process and the asymptotic expansion of their solutions in the scintillation regime. 

The main technical component of our derivation revisits these partial differential equations for arbitrary-order statistical moments. In particular, we show that in Fourier variables, the solutions to these equations with arbitrary initial measures with bounded variation can be written, up to a negligible component, as a functional of first and second statistical moments as for complex Gaussian variables. This control allows us to pass to the diffusive (long-propagation distance) regime and after inverse Fourier transformation, to obtain error estimates in the physical variables in the uniform sense. These elements allow us to show convergence of finite dimensional distributions of the random wave process to a circular Gaussian limit. We also obtain stochastic continuity and tightness results showing that the random wave process converges in distribution to its limit as distributions over spaces of H\"older-continuous functions.

We note also that a steady-state solution of the It\^o-Schr\"odinger wave process is analyzed in \cite{fouque1998forward}.  When augmented with appropriate boundary conditions in the lateral variables, that paper shows that the wave field is circular complex normal. One of our main results is indeed to demonstrate that an initial deterministic beam profile converges to such a limit over long distances of propagation. Additional results on moments of the solution to the It\^o-Schr\"odinger wave equation may be found in {\color{black}\cite{bal2004self, bal2025long,bal2010dynamics,papanicolaou2007self}. Also see~\cite{bal2024long, bal2025splitting} for a direct analysis of statistical moments of the paraxial approximation in the scintillation regime and its role in justifying numerical schemes when $\eps=1$.}

\medskip

The rest of the paper is structured as follows. Section \ref{sec:process} introduces the random wave process solution of the It\^o-Schr\"odinger equation and its appropriate rescalings in the kinetic and diffusive sub-regimes of the scintillation regime. It also presents the main convergence results of the paper. The wave process is uniquely characterized by its first and second statistical moments in the scintillation regime. Such moments are computed explicitly in the regimes of interest in section \ref{sec:firstmoments}. The main technical part of the paper is a careful analysis of arbitrary statistical moments of the wave process in the scintillation regime. We analyze such moments in section \ref{sec:highermoments}. The proof of the main results on tightness and convergence properties as well as remarks and possible extensions are collected in section \ref{sec:final}.

\section{Random wave process and main results}
\label{sec:process}
\paragraph{It\^{o}-Schr\"{o}dinger wave process.}
We consider the setting of a time-harmonic wave beam propagating along a main axis $z\in [0,\infty)$ with lateral variables denoted by $x\in\Rm^d$ (with $d=2$ in the physical setting). 
{\color{black}
The starting model of scalar wave beam propagation in heterogeneous media is the following time-harmonic Helmholtz (wave) equation with varying propagation speed:
\[
   \Big( \partial^2_z + \Delta_x + \frac{\omega_0^2}{c^2(z,x)}\Big) p(z,x) =  p_0(x) \delta'_0(z)
\]
where $p_0(x)$ is the profile of the incident beam at $z=0$. Writing the central frequency  $\omega_0=c k_0$ for $c$ an effective speed, $k_0$ the carrier wave number of the source and  $n(z,x)=\frac{c_0}{c(z,x)}$ the refractive index where $c_0$ the speed of light in vacuum, the above equation with $n^2(z,x)=1+\nu(z,x)$ for $\nu(z,x)$ a random coefficient with variance $\sigma^2$ (say at $(0,0)$), is recast as
\[
   \Big( \partial^2_z + \Delta_x + k_0^2 (1+\nu(z,x))\Big) p(z,x) = p_0(x) \delta'_0(z).
\]
Different regimes of wave propagation appear for different scalings of the random coefficient $\nu(z,x)$ \cite{bal2015limiting,BKR-KRM-10,garnier2018multiscale,sheng1990scattering}. The paraxial approximation aims at considering high-frequency waves propagating over long distances along the main axis $z$, and then rescaling the random fluctuations so they contribute to leading order, which is modeled as
\[
  z \to \frac{z}{\theta},\qquad x \to x,\qquad k_0\to \frac{k_0}\theta,\qquad \sigma^2\to \theta^3 \sigma^2, \qquad p_0\to p_0,
\]
for some adimensionalized parameters $0<\theta\ll1$. The rapid oscillations along the main axis are compensated as follows
\[
  p(z,x) = e^{i\frac{ k_0 z}{\theta}} u(\theta z,x),\quad u(z,x) = e^{-i\frac{k_0}\theta \frac z\theta} p (\frac z\theta,x).
\]
With the approximation $|\theta^2\partial^2_z u|\ll1$, we formally obtain the following paraxial equation for the envelope $u(z,x)$:
\[
  \Big(2ik_0 \partial_z + \Delta_x + \frac {k_0^2}{\theta^{\frac12}} \nu(\frac z\theta,x) \Big)u = 0,\qquad u(0,x)=u_0(x)
\]
with incident beam profile $u_0(x)=p_0(x)$. For a process $\nu(z,x)$ with sufficiently short correlation (in $z$), a formal functional central limit approximation leads to 
\[
  \frac 1{\theta^{\frac12}} \nu(\frac z\theta,x) dz \approx B(dz,x)
\]
with $B(z,x)$ denoting a mean zero Gaussian process over $[0,\infty)\times \mathbb{R}^d$, with statistics given by 
\begin{equation}\label{eq:B}
    \mathbb{E}[B(z,x)B(z',y)]=\min(z,z') R(x-y).
\end{equation}
Here, $R(x-y)=\int_{-\infty}^\infty\mathbb{E}\nu(0,x)\nu(z,y)\mathrm{d}z$ denotes the lateral spatial covariance of the fluctuations in the medium. We collect the assumptions we make on $R$ in a paragraph below.

Formally passing to a limit $\theta\to0$ in the above equation for $u$ yields the It\^o-Schr\"{o}dinger equation
\begin{equation}\label{eq:ustrato}
 du = \frac{i}{2k_0}\Delta_x u dz + \frac{ ik_0}2 u \circ dB,\qquad u(0,x)=u_0(x).
\end{equation}
Here, $u\circ dB$ is product in the Stratonovich sense, reflecting the property that the random medium $\nu(z,x)$ is anticipating toward neither  positive nor negative values of $z$. For a rigorous derivation of the It\^o-Schr\"{o}dinger equation starting from the Helmholtz model, we refer the reader to~\cite{bailly1996parabolic, garnier2009coupled}.

Recasting product in~\eqref{eq:ustrato} in It\^o form to display the independence of $u(z,\cdot)$ and $B(dz,\cdot)$, we obtain 
\begin{equation}\label{eqn:u_SDE}
    \mathrm{d}u=\frac{i}{2k_0}\Delta_xu\mathrm{d}z-\frac{k_0^2R(0)}{8}u\mathrm{d}z+\frac{ik_0}{2}u\mathrm{d}B,\quad (z,x)\in[0,\infty)\times\mathbb{R}^d\,.
\end{equation}
The product $udB$ is written in the It\^o sense, i.e., such that $\E udB=0$ with $\E$ expectation associated to $\Pm$, the probability measure on a probability space where the Gaussian process is defined. This is the starting point in this paper.
}

We assume a source (incident boundary condition) of the form
\begin{equation*}
    u(0,x)=u_0(x),
\end{equation*}
with $u_0$ square integrable and such that its Fourier transform is the density of a measure with finite total variation. 

From the theory developed in \cite{dawson1984random}, we obtain that the process $u(z,x;\omega)$ is defined in $\sL^2(\mathbb{R}^d)$ uniformly in $z\ge 0$  $\mathbb{P}$-a.s., where $\omega$ is a realization of $B$ over a sample space $\Omega$.

\paragraph{Rescaled process in scintillation regime.}

The kinetic regime of the scintillation scaling, or weak-coupling regime \cite{garnier2016fourth,gu2021gaussian}, corresponds to a scaling of the form
\begin{equation*}
    z \to \frac{z}{\eps},\quad R \to \eps R(x),\quad 0< \eps < 1.
\end{equation*}
%where $z'$ is $\mathcal{O}(1)$ and the Fourier transform of $R$, $\hat{R}(\xi)\in L^1(\mathbb{R}^d)\cap L^\infty(\mathbb{R}^d)$. 
Under this scaling, we observe that the random wave process for $u^\eps$ takes the form
\begin{equation}\label{eq:uSDEk}
    \mathrm{d}u^\eps=\frac{i}{2k_0\eps}\Delta_xu^\eps\mathrm{d}z-\frac{k_0^2R(0)}{8}u^\eps\mathrm{d}z+\frac{ik_0}{2}u^\eps\mathrm{d}B.
\end{equation}
Distances $z=\mathcal{O}(1)$ correspond to an intermediate distance regime where the cumulative effects of random perturbations are of order $\mathcal{O}(1)$. We will refer to this as the kinetic scaling under scintillation regime. 

In order to analyze the long-distance propagation statistics under the scintillation regime, we introduce a parameter $\eta=\eta(\eps)$ such that  the rescaling from \eqref{eqn:u_SDE} now takes the form:
\begin{equation*}
z \to \frac{\eta z}{\eps},\quad R\to\frac{\eps}{\eta^{3}}R(x)\,.  
\end{equation*}
For $\eta\ll1$, specifically $\eta=(\ln|\ln\eps|)^{-1}$, this corresponds to a long distance propagation regime where the cumulative effects of randomness is $\frac{\eta}\eps \frac\eps{\eta^3}\gg 1$. For reasons that will become clear soon, we will refer to this as the diffusive regime. In this setting, we find
\begin{equation}\label{eq:uSDEd}
      \mathrm{d}u^\eps=\frac{i\eta}{2k_0\eps}\Delta_xu^\eps\mathrm{d}z-\frac{k_0^2R(0)}{8\eta^2}u^\eps\mathrm{d}z+\frac{ik_0}{2\eta}u^\eps\mathrm{d}B\,.  
\end{equation}
To analyze wave fields in the diffusive regime, we will assume that $R(x)$ is sufficiently smooth near $x=0$ and that $\nabla^2R(0)$ is a negative definite tensor with $R(x)$ maximal at $x=0$. 

We do not write the dependence of the wave field in $\eta$. The kinetic regime corresponds to $\eta(\eps)=1$ while in the diffusive regime, we have  $\eta(\eps)=(\ln|\ln\eps|)^{-1}$.

The incident conditions $u(z=0)$ have not been rescaled yet. A boundary condition $u(0,x)=u_0(x)$ independent of $\eps$ corresponds to what we will refer to as a narrow beam. These are the beams analyzed in \cite{gu2021gaussian,bal2011asymptotics}. Our results apply to broader beams, as in the analysis in \cite{garnier2014scintillation, garnier2016fourth}.

{\color{black} We consider sufficiently wide incident beam profiles of the form 
\begin{equation}\label{eq:u0}
    u^\eps_0(x)= \sff(\eps^\beta x),\quad \sff\in\mathcal{S}(\mathbb{R}^d),\ \  \beta\ge 1\,.
\end{equation}
Here, $\mathcal{S}(\mathbb{R}^d)$ is the space of Schwartz functions. We can also extend our analysis to include incident beam profiles given as finite superposition of plane waves modulated by wide sources:
\begin{equation}\label{eq:u0plane}
    u_0^\eps(x)=\sum_{\sm=1}^\sM \sff_\sm(\eps^\beta x)e^{i\sk_\sm\cdot x},\quad \sff_\sm\in\mathcal{S}(\mathbb{R}^d), \ \ \beta\ge 1\,,
\end{equation}
and $\Rm^d\ni \sk_\sm\neq \sk_{\sn}$ when $\sm\neq \sn$. These sources can be viewed as modulated rapidly oscillating plane waves. We consider only finitely many such superpositions as a continuous superposition could correspond to narrow incident beams for which our results presumably do not apply.
}

In both cases, what is important for our derivation is that the Fourier transform of $u_0^\eps$ is given by $\sum_{\sm=1}^\sM\eps^{-d\beta}\,\hat{\sff}_\sm(\eps^{-\beta}(\xi-\sk_\sm))$ in the latter case and $\eps^{-d\beta}\,\hat\sff(\eps^{-\beta}\xi)$ in the former case. These functions are integrable uniformly in $\eps$ so that $\widehat{u_0^\eps}(\xi)d\xi$ is a signed measure with uniformly finite total variation.

While a richer asymptotic limit is obtained in the diffusive regime for $\beta=1$, we also present the simpler case of very wide beam profile $\beta>1$ as a model of one (or $\sM$) incident plane wave(s).

{
\color{black} 
Note that while our incident beam is broad with a width of order $\eps^{-\beta}$ for $\beta\geq1$, the overall distance of propagation along the main axis after which we probe the signal is of order $\frac{\eta}{\eps\theta}\gg \frac{1}{\eps^\beta}$.

}

%%%%%

\begin{hypothesis}[Covariance function $R(x)$]\label{hyp:R} \rm
We assume throughout the paper that $R(x)=R(-x)\in \sL^1(\mathbb{R}^d)\cap \sL^\infty(\mathbb{R}^d)$. This implies that $\hat{R}(\xi)\in \sL^1(\mathbb{R}^d)\cap \sL^\infty(\mathbb{R}^d)$ as well. As a power spectrum, we have $\hat R(\xi)\geq0$ although all arguments below would apply after replacing $\hat R(\xi)$ by $|\hat R(\xi)|$. 

We also assume the existence of a radially symmetric $\hat \sR(\xi)\in \sL^1(\Rm^d)$ such that $\hat R(\xi) \leq \hat \sR(\xi)=\hat \sR(|\xi|)$. %%% \an{$S$ is the scintillation index} %%% \gb{Should we try $\sR$? or ${\mathrm R}$?}\an{sounds good.}

We next assume that for each $e\in {\mathbb S}^{d-1}$ and $\tau\in\Rm^d$, then $s\mapsto R(\tau+se)$ is integrable. 

Finally, in lateral dimension $d\geq3$, we assume that $\aver{\xi}^{d-2}\hat R(\xi)\in \sL^\infty(\Rm^d)$, where $\aver{\xi}:=\sqrt{1+|\xi|^2}$.

These assumptions are needed in the kinetic regime $\eta(\eps)=1$. In the diffusive regime with $\eta^{-1}=\ln\ln\eps^{-1}$, we further {\color{black} observe that $R(x)$ is maximal at $x=0$ and assume it is }sufficiently smooth there that the Hessian $\Gamma=\nabla^2 R(0)$ is defined and negative definite. 
\end{hypothesis}

The above hypotheses are sufficient to obtain convergence of finite dimensional distributions. Additional natural continuity requirements are imposed in Theorem \ref{thm:tightness} to deduce stochastic continuity and tightness properties of $u^\eps$.

\paragraph{Spatially rescaled random vector.}
We know from the results in \cite{dawson1984random} that {\color{black} for a realization $\omega$ of $B$ over sample space $\Omega$,} $u^\eps(z,x;\omega)$ with condition at $z=0$ given by \eqref{eq:u0plane} is defined as a continuous process in $z$ with values in $\sL^2(\Rm^d\times\Omega)$ and hence for a given $z$ for almost all $(x,\omega)$. Moreover, restricting the definition of $\eps$ to a countable sequence, for instance $\eps^{-1}\in\Nm^*$, we may assume $u^\eps(z,x;\omega)$ thus defined for all such values of $\eps$. We drop the dependence in $\omega$. 

For $z>0$ and $r\in\Rm^d$ fixed, let us define the rescaled process
\begin{equation}\label{eq:phieps}
   \phi^\eps(z,r,x)=u^\eps(z,\eps^{-\beta}r+\eta x).
\end{equation}
Let $X=(x_1,\cdots,x_N)$ be a collection of $N$ points in $\Rm^d$. For almost all choices of $X$, we define the random vector
\begin{equation}\label{eq:Phieps}
   \Phi^\eps(z,r,X) = (\phi^\eps(z,r,x_1),\cdots,\phi^\eps(z,r,x_N))
   =(u^\eps(z,\eps^{-\beta}r+\eta x_1),\cdots,u^\eps(z,\eps^{-\beta}r+\eta x_N)).
\end{equation}

Our objective is to establish the convergence of the finite dimensional distributions $\Phi^\eps$ and of the process $\phi^\eps$ as $\eps\to0$. We recall that $\eta(\eps)=1$ in the kinetic regime while $\eta^{-1}=\ln\ln\eps^{-1}$ in the diffusive regime. In particular, we will establish that $\Phi^\eps-\E\Phi^\eps$ is in law approximately circularly symmetric complex Gaussian in the kinetic regime as $\eps\to0$ while $\Phi^\eps$ {\em itself} is in law approximately circularly symmetric complex Gaussian as $\eps\to0$ in the diffusive regime. The latter case corresponds to the setting of fully developed speckle. We recall that a process is circularly symmetric complex Gaussian if its finite dimensional distributions $Z=(Z_1,\ldots,Z_n)$ are circularly symmetric complex Gaussian, i.e., when $Z=\Re Z+i\Im Z$ with the real part $\Re Z$ and imaginary part $\Im Z$ independent zero-mean Gaussian processes. The moments characterizing such processes are given explicitly in Remark \ref{rem:GSR} below.% \ref{subsec:mu_pq_phys}.

\paragraph{Main results.} We assume that $R$ satisfies the assumptions in Hypothesis \ref{hyp:R} and first that $\eta(\eps)=1$.
\begin{theorem}[Kinetic regime]\label{thm:kinetic}
    The mean zero random vector $\Phi^\eps-\mathbb{E}[\Phi^\eps]\boldsymbol{\Rightarrow}\tilde{\Phi}$ in distribution as $\eps\to 0$, where $\tilde{\Phi}=(\tilde{\phi}_1,\ldots,\tilde{\phi}_N)$ is a circularly symmetric Gaussian random vector characterized by
    \begin{equation}\label{eq:mtsZ}
        \mathbb{E}[\tilde{\phi}_j\tilde{\phi}_l]=0\qquad \mbox{ and } \qquad       \mathbb{E}[\tilde{\phi}_j\tilde{\phi}^\ast_l]=\widetilde{M}_{1,1}(z,r,x_j,x_l)\,.
    \end{equation}
    Here, $\widetilde{M}_{1,1}$ is the limiting centered second moment given by \eqref{eqn:plane_wave_mod}-\eqref{eqn:mu_2_plane_wave_super_pos} below in both cases $\beta=1$ and $\beta>1$.
\end{theorem}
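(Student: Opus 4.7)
The plan is to prove convergence in distribution via the method of moments. Because the limit $\tilde\Phi$ is a circularly symmetric complex Gaussian vector, its law is uniquely determined by its joint moments (its characteristic function is entire, and the moments satisfy Carleman's condition). Hence it suffices to show that, for each pair of nonnegative integers $(p,q)$ and each choice of indices $i_1,\ldots,i_p$ and $i'_1,\ldots,i'_q$ in $\{1,\ldots,N\}$, the centered mixed moment
\begin{equation*}
\E\Bigl[\prod_{k=1}^{p}\bigl(\phi^\eps(z,r,x_{i_k})-\E\phi^\eps(z,r,x_{i_k})\bigr)\prod_{l=1}^{q}\bigl(\phi^\eps(z,r,x_{i'_l})-\E\phi^\eps(z,r,x_{i'_l})\bigr)^\ast\Bigr]
\end{equation*}
converges as $\eps\to0$ to the corresponding moment of $\tilde\Phi$, which vanishes when $p\neq q$ and, when $p=q$, is given by the Wick/Isserlis sum $\sum_{\sigma\in S_p}\prod_{k=1}^{p}\widetilde{M}_{1,1}(z,r,x_{i_k},x_{i'_{\sigma(k)}})$.

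The core input is the moment analysis promised in Section~\ref{sec:highermoments}. In Fourier variables, the $(p,q)$-moment $\mu_{p,q}^\eps$ satisfies a closed linear evolution equation whose solution operator decomposes as $U^\eps_{p,q}=N^\eps_{p,q}+E^\eps_{p,q}$, where $N^\eps_{p,q}$ is a functional of the first and second moments that precisely reproduces the complex Gaussian Wick structure, and $E^\eps_{p,q}$ is negligible as $\eps\to0$. Because $\widehat{u_0^\eps}(\xi)\,d\xi$ is a signed measure of uniformly bounded total variation (by the form \eqref{eq:u0plane} or \eqref{eq:u0} of the incident beam), applying $U^\eps_{p,q}$ to the product initial datum and inverting the Fourier transform yields error bounds that are \emph{uniform in the physical points} $x_1,\dots,x_N$. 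Substituting this decomposition into the centered moment above, the centering cancels exactly those terms of $N^\eps_{p,q}$ that carry a dangling first-moment factor, leaving a sum over pairings of the remaining positions. For $p\neq q$, no admissible pairing exists between like-type factors once the moment structure dictated by the It\^o--Schr\"odinger equation is taken into account (the surviving pairings force equal numbers of $u^\eps$ and $(u^\eps)^\ast$ factors, as unbalanced pairings produce rapidly oscillating Fourier integrands whose contributions decay), so the centered moment tends to zero; for $p=q$, only pairings between $u^\eps$'s and $(u^\eps)^\ast$'s survive, producing the Isserlis sum with kernel $\widetilde{M}_{1,1}$, which by the explicit computation in Section~\ref{sec:firstmoments} matches \eqref{eqn:plane_wave_mod}--\eqref{eqn:mu_2_plane_wave_super_pos}.

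The main obstacle is the moment-equation analysis itself: controlling the Wick-factorization error $E^\eps_{p,q}$ uniformly in $(p,q)$-order, uniformly in the lateral variables, and uniformly in $\eps$. The combinatorial bookkeeping of pairings versus non-pairings in the Duhamel expansion of the moment equation is delicate, and one must additionally verify that the bounded total variation of $\widehat{u_0^\eps}$ suffices to convert the Fourier-side estimates into pointwise estimates at the prescribed (almost every) points $x_j$, which is the reason the functional class on the initial data appears in the hypotheses. A secondary but essential point is a tail estimate on the moments guaranteeing that the method of moments applies, i.e.\ that the $(p,q)$-moments of $\Phi^\eps-\E\Phi^\eps$ grow no faster than those of a Gaussian, so that the moment problem is determinate and convergence of moments implies convergence in law of the finite-dimensional distributions; this follows from the same explicit Wick-type bound for $N^\eps_{p,q}$ once $E^\eps_{p,q}$ is controlled.
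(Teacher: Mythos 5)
Your proposal is correct and follows essentially the same route as the paper: method of moments using the decomposition $U^\eps_{p,q}=N^\eps_{p,q}+E^\eps_{p,q}$ (Theorem \ref{thm:Ueps} and Corollaries \ref{corro:centred_mom}, \ref{coro:M_pq_moments}), total-variation control of $\widehat{u_0^\eps}$ to pass to uniform physical-space estimates, the vanishing of centered moments for $p\neq q$ via $\tilde N^\eps_{p,q}=0$, and the Carleman criterion for moment determinacy of the Gaussian limit. One minor remark: uniformity of the bound on $E^\eps_{p,q}$ in the order $(p,q)$, which you flag as a main obstacle, is not actually required — the paper only needs $E^\eps_{p,q}\to0$ for each fixed $(p,q)$, since the moment method is applied order by order.
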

\begin{remark}[Kinetic regime for smooth incident beams]\label{rem:cv} \rm When the incident beam profile is given by \eqref{eq:u0}, the mean vector $\mathbb{E}[\Phi^\eps]$ converges as $\eps\to0$. This is not the case in general when \eqref{eq:u0plane}
 holds. {\color{black}This is because the first moment $\mathbb{E}[\phi^\eps]$ has a limit as $\eps\to0$ when the incident beam is of the form \eqref{eq:u0} but remains oscillatory for plane-wave incident beams in \eqref{eq:u0plane} in the kinetic regime.} Then, $\Phi^\eps\boldsymbol{\Rightarrow}\Phi$ in distribution as $\eps\to 0$, where $\Phi=({\phi}_1,\ldots,{\phi}_N)$ is a random vector such that $\mathbb{E}[\phi_j]=M_{1,0}(z,r,x_j)$ and $\mathbf{Z}=\Phi-\mathbb{E}[\Phi]$ is a circularly symmetric Gaussian random vector characterized by
        \begin{equation}\label{eq:mtsZ_raw}
        \mathbb{E}[Z_jZ_l]=0\qquad \mbox{ and } \qquad       \mathbb{E}[Z_jZ^\ast_l]=M_{1,1}(z,r,x_j,x_l)-M_{1,0}(z,r,x_j)M_{0,1}(z,r,x_l),
    \end{equation}
where $Z_j=\phi_j-\mathbb{E}[\phi_j]$. Here, $M_{1,1}$ is given by \eqref{eqn:mu_2_kinetic_beta>1} when $\beta>1$ and by \eqref{eqn:mu_2_kinetic_beta=1} when $\beta=1$. $M_{1,0}$ is given by~\eqref{eqn:mu_1_limit} and $M_{0,1}=M_{1,0}^*$.    
\end{remark}
Assume further that $R$ is sufficiently smooth and maximal at $x=0$ with $\Gamma=\nabla^2 R(0)<0$. Then we have for $\eta^{-1}(\eps)=\ln\ln\eps^{-1}$ the convergence result:
\begin{theorem}[Diffusive regime]\label{thm:diffusive}
        The random vector  $\Phi^\eps\boldsymbol{\Rightarrow}\Phi$ in distribution as $\eps\to 0$, where  $\Phi$ is a circularly symmetric Gaussian random vector with elements $\{\phi_j\}_{j=1}^N$ such that
    \begin{equation}\label{eq:mtphi}
        \mathbb{E}[\phi_j\phi_l]=0,\quad \mathbb{E}[\phi_j\phi^\ast_l]=M_{1,1}(z,r,x_j,x_l)\,.
    \end{equation}
  Here, $M_{1,1}$ is given below by~\eqref{eqn:mu_2_diffusive_beta>1} when $\beta>1$ and~\eqref{eqn:mu_2_diffusive_beta=1} when $\beta=1$. 
\end{theorem}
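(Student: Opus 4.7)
The plan is to prove convergence in law by the moment method: since the claimed limit is Gaussian (and hence determined by its moments), showing that every joint moment of $\Phi^\eps$ converges to the corresponding Wick moment of the Gaussian limit in~\eqref{eq:mtphi}, together with uniform-in-$\eps$ moment bounds, will yield $\Phi^\eps \boldsymbol{\Rightarrow} \Phi$. The explicit target moments are the Isserlis/Wick sums over complete pairings referenced in Remark~\ref{rem:GSR}, with all $M_{1,0}$ factors absent.

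Concretely, I would introduce the mixed moments
\begin{equation*}
m^\eps_{p,q}(z,r,X,Y)=\mathbb{E}\Big[\prod_{j=1}^p \phi^\eps(z,r,x_j)\prod_{l=1}^q \phi^{\eps*}(z,r,y_l)\Big]
\end{equation*}
and invoke the central output of Section~\ref{sec:highermoments}, namely a decomposition $m^\eps_{p,q}=\mathscr{N}^\eps_{p,q}+\mathscr{E}^\eps_{p,q}$ in which $\mathscr{N}^\eps_{p,q}$ is a functional built from first and second moments of $\phi^\eps$ mimicking the complex-Gaussian Wick expansion, while $\mathscr{E}^\eps_{p,q}$ is a remainder estimated in $L^\infty$ after inverse Fourier transformation. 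The key distinction from Theorem~\ref{thm:kinetic} is that in the diffusive scaling the damping term $-k_0^2 R(0)/(8\eta^2)$ in~\eqref{eq:uSDEd} forces $\mathbb{E}[u^\eps(z,x)]$, and hence $M_{1,0}$, to decay like $e^{-c z/\eta^2}\to 0$. Consequently, in the Gaussian functional $\mathscr{N}^\eps_{p,q}$ all contributions containing an unpaired factor vanish in the limit and only fully-paired terms survive, forcing $p=q$ and producing
\begin{equation*}
\lim_{\eps\to 0} m^\eps_{p,q}=\delta_{pq}\sum_{\sigma\in S_p}\prod_{j=1}^p M_{1,1}(z,r,x_j,y_{\sigma(j)}),
\end{equation*}
which is exactly the Wick moment of a circularly symmetric complex Gaussian vector with covariance $M_{1,1}$. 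The explicit identification of $M_{1,1}$ as~\eqref{eqn:mu_2_diffusive_beta>1} (for $\beta>1$) or~\eqref{eqn:mu_2_diffusive_beta=1} (for $\beta=1$) follows from the dedicated second-moment computation in Section~\ref{sec:firstmoments}.

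The main obstacle will be showing $\mathscr{E}^\eps_{p,q}\to 0$ in the diffusive regime. In the kinetic setting, the analogous bound arises from a Dyson/tree expansion of the phase-compensated evolution operator $U^\eps_{p,q}$ in which each non-pairing interaction contributes a small factor in $\eps$, summable over the combinatorial tree. In the diffusive regime, the factors $\eta^{-2}$ in the damping and $\eta^{-1}$ in the stochastic forcing inflate each iteration by negative powers of $\eta$, and the very slow choice $\eta^{-1}=\ln\ln\eps^{-1}$ is calibrated precisely so that accumulated factors of the form $e^{C/\eta^2}$ are still dominated by the powers of $\eps$ generated by unpaired interactions. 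The smoothness hypothesis $\Gamma=\nabla^2 R(0)<0$ from Hypothesis~\ref{hyp:R} enters through the quadratic expansion $R(x)\approx R(0)+\tfrac12 x\cdot\Gamma x$ near the origin, which converts the phase-compensated operator into an effective diffusion generator in the rescaled transverse variables and is what ultimately produces the limiting covariance $M_{1,1}$.

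Combining the moment convergence above with uniform-in-$\eps$ bounds on $m^\eps_{p,q}$ (inherited from energy estimates on the closed PDEs for the moments of $u^\eps$) yields, via the moment method for Gaussian distributions on $\mathbb{R}^{2N}$, convergence of finite-dimensional distributions $\Phi^\eps\boldsymbol{\Rightarrow}\Phi$ with $\Phi$ the circularly symmetric complex Gaussian described by~\eqref{eq:mtphi}. The upgrade to functional convergence on Hölder spaces via stochastic continuity and tightness is deferred to Theorem~\ref{thm:tightness}.
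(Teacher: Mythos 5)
Your proposal is correct and follows essentially the same route as the paper: the paper proves Theorem \ref{thm:diffusive} by repeating the moment-method argument of Theorem \ref{thm:kinetic} (uniform moment bounds giving tightness, convergence of all moments via the decomposition $\mu^\eps_{p,q}=\mathscr{N}^\eps_{p,q}+\mathscr{E}^\eps_{p,q}$ of Theorem \ref{thm:mupqphys} together with the first- and second-moment limits of Lemma \ref{lem:limmts}, and moment determinacy of the Gaussian limit via a Carleman criterion), with the only diffusive-specific observation being that $M_{1,0}=0$ when $\eta\to 0$ so that only the fully-paired Wick terms survive --- exactly the points you identify, including the role of $\eta^{-1}=\ln\ln\eps^{-1}$ in taming the $e^{C/\eta^2}$ growth of the error term.
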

These theorems are proved in section \ref{sec:final}. The explicit expressions for the moments $\widetilde{M}_{1,1}$, $M_{1,1}$, and $\mathbb{E}[\phi^\eps]$ are given in section \ref{sec:firstmoments} in the kinetic and diffusive regimes. 

The result of Theorem \ref{thm:diffusive} shows that in the diffusive regime, the random wave field $\phi^\eps$ has finite dimensional distributions that are indeed circularly symmetric normal asymptotically as $\eps\to0$. Moreover, we obtain the following immediate corollaries:
%%%%%
\begin{corollary}[Scintillation in the diffusive regime]\label{cor:scint}
Define the intensity
\begin{equation}\label{eqn:I_eps}
    I^\eps(z,r,x)=|\phi^\eps(z,r,x)|^2 = |u^\eps(z,\eps^{-\beta} r +\eta x)|^2\,.
\end{equation}
    Then $I^\eps\boldsymbol{\Rightarrow}I$ in distribution as $\eps\to 0$ where $I(z,r)$ at (z,r) fixed is distributed according to an exponential law with $\mathbb{E}[I](z,r)$ given by $\sum_{\sm=1}^\sM|\,\sff_\sm(r)|^2$ (or $|\,\sff(r)|^2$ for the incident beam \eqref{eq:u0}) when $\beta>1$ and by $\mathbb{E}[I](z,r)=I_2(z,r)$, the solution to a diffusion equation \eqref{eqn:I2_diffusion} when $\beta=1$ independent of $x$. In particular, the scintillation index
    \begin{equation}\label{eq:scintindex}
    \sS(z,r):=\frac{\mathbb{E}[I^2]-\mathbb{E}[I]^2}{\mathbb{E}[I]^2}=1,\quad\forall z>0, \ r\in\mathbb{R}^d\,.
    \end{equation}
\end{corollary}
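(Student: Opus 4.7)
The plan is to specialize Theorem \ref{thm:diffusive} to the single-point case $N=1$ and then apply the continuous mapping theorem. Since Theorem \ref{thm:diffusive} gives $\phi^\eps(z,r,x) \boldsymbol{\Rightarrow} \phi(z,r,x)$ in distribution to a circularly symmetric complex Gaussian with $\mathbb{E}[\phi\phi]=0$ and $\mathbb{E}[\phi\phi^\ast] = M_{1,1}(z,r,x,x)$, and since $w \mapsto |w|^2$ is continuous $\Rm^2 \to \Rm$, we immediately obtain $I^\eps \boldsymbol{\Rightarrow} I := |\phi|^2$ in distribution.

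Next I would identify the law of $I$. Writing $\sigma^2 := M_{1,1}(z,r,x,x)$ and decomposing $\phi = X + iY$, the circular symmetry and the vanishing pseudo-covariance $\mathbb{E}[\phi^2]=0$ force $X$ and $Y$ to be independent centered real Gaussians with common variance $\sigma^2/2$. Therefore $|\phi|^2/(\sigma^2/2) \sim \chi^2_2$, which is exponential with mean $2$; rescaling, $I$ is exponential with mean $\sigma^2 = M_{1,1}(z,r,x,x)$.

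The third step is to read off $M_{1,1}(z,r,x,x)$ from the explicit formulas derived in Section \ref{sec:firstmoments}. For the very-wide regime $\beta>1$, formula \eqref{eqn:mu_2_diffusive_beta>1} yields $M_{1,1}(z,r,x,x) = \sum_{\sm=1}^\sM |\sff_\sm(r)|^2$ (or $|\sff(r)|^2$ for the beam \eqref{eq:u0}), which is manifestly independent of $x$. For $\beta=1$, formula \eqref{eqn:mu_2_diffusive_beta=1} gives $M_{1,1}(z,r,x,x) = I_2(z,r)$, where $I_2$ solves the stated diffusion equation \eqref{eqn:I2_diffusion}; the $x$-independence here reflects the fact that in the diffusive regime the centered second moment collapses at coincident points, leaving only the $r$-dependent mean intensity.

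Finally, for the scintillation index, the cleanest route is to apply the Isserlis/Wick rule for circularly symmetric complex Gaussians: the only nonzero pairing in $\mathbb{E}[\phi\phi\phi^\ast\phi^\ast]$ gives $\mathbb{E}[|\phi|^4] = 2(\mathbb{E}[|\phi|^2])^2$, hence $\mathbb{E}[I^2] - \mathbb{E}[I]^2 = 2\sigma^4 - \sigma^4 = \sigma^4 = \mathbb{E}[I]^2$, yielding $\sS(z,r)=1$. (Equivalently, an exponential random variable has variance equal to the square of its mean.) The only non-trivial ingredient here is Theorem \ref{thm:diffusive} itself, together with the explicit evaluation of $M_{1,1}$ at coincident spatial arguments; everything else reduces to elementary facts about circular complex Gaussians. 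The main obstacle, therefore, lies entirely in the preceding results and not in the corollary.
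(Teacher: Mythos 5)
Your proposal is correct and follows essentially the same route as the paper: both reduce the corollary to Theorem \ref{thm:diffusive} plus elementary facts about circularly symmetric complex Gaussians, the only cosmetic difference being that you invoke the continuous mapping theorem on the weak limit $\phi$ and then identify the law of $|\phi|^2$ directly, whereas the paper passes through the moment convergence $\mathbb{E}[(I^\eps)^p]\to p!\,M_{1,1}^p(z,r,x,x)$ and recognizes these as the moments of an exponential law. Your evaluation of $M_{1,1}$ at coincident points and the Wick computation giving $\sS=1$ match the paper's.
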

%%%%
The above corollary fully characterizes the scintillation regime with a scintillation index uniformly equal to $1$ corresponding to a fully developed speckle pattern. The diffusion equation \eqref{eqn:I2_diffusion} obtained when $\beta=1$ is naturally written in the `time' variable $z^3$ and lateral `spatial' variables $r$, resulting in a very anomalous diffusion regime with $r\sim z^{\frac32}$ for large $z$ {\color{black} corresponding to anomalous} beam spreading.
%%%
\begin{corollary}[Self-averaging in the diffusive regime]\label{cor:selfaver} Consider the average intensity over a region $D$
    \begin{equation}
        I^{\eps}_D(z,r,x)=\frac{1}{|D|}\int_{D}I^\eps(z,r+r',x)\mathrm{d}r'\,,
    \end{equation}
    where $D$ is a cube centered at the origin in $\mathbb{R}^d$ with length $a_\eps\gg \eps^\beta \eta$. Then $I^{\eps}_D(z,r,x) \boldsymbol{\Rightarrow} I_{D}(z,r,x)$ in probability as $\eps\to 0$ where the deterministic limit $I_D$ is given by
    \begin{equation}
        I_{D}(z,r,x)= \left\{ \begin{array}{cl} \dfrac{1}{|D|}\dint_{D} \E I(z,r+r')\mathrm{d}r',  & a_\eps=a>0 \\
        \E I(z,r), & a_\eps\to0, \end{array}\right.
    \end{equation}
    with $I$ as in Corollary \ref{cor:scint}.
\end{corollary}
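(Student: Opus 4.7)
The plan is to show $I^\eps_D \to I_D$ in $\sL^2(\Omega)$, from which convergence in probability follows by Chebyshev's inequality. Since $I_D$ is deterministic, this reduces to (i) $\E I^\eps_D \to I_D$ and (ii) $\mathrm{Var}(I^\eps_D) \to 0$. For (i), write $\E I^\eps_D(z,r,x) = |D|^{-1}\int_D \E I^\eps(z,r+r',x)\,\mathrm{d}r'$; Corollary \ref{cor:scint} (equivalently, the second-moment formulas of Section \ref{sec:firstmoments} evaluated at $x_1 = x_2 = x$) yields $\E I^\eps(z,\cdot,x) \to \E I(z,\cdot)$ locally uniformly in $r$, with limit independent of $x$. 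For $a_\eps = a > 0$, dominated convergence delivers the average of $\E I$ over $D$; for $a_\eps \to 0$, uniform convergence combined with continuity of $r \mapsto \E I(z,r)$ (smooth by \eqref{eqn:I2_diffusion}) produces $\E I(z,r)$.

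For (ii), expand
\begin{equation*}
\mathrm{Var}(I^\eps_D) = \frac{1}{|D|^2}\int_D\int_D \mathrm{Cov}\bigl(I^\eps(z,r+r',x),\, I^\eps(z,r+r'',x)\bigr)\,\mathrm{d}r'\,\mathrm{d}r''.
\end{equation*}
The integrand is a fourth-order moment of $u^\eps$ controlled in Section \ref{sec:highermoments}. Combining the asymptotic circular Gaussianity from Theorem \ref{thm:diffusive} with the Isserlis/Wick rule and $\E[\phi\phi]=0$ yields $\mathrm{Cov}(|\phi^\eps_1|^2,|\phi^\eps_2|^2) = |\E[\phi^\eps_1 \phi^{\eps\ast}_2]|^2 + o(1)$, where $\phi^\eps_j = \phi^\eps(z,r+r'_j,x)$. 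Relabeling $\tilde r = r + (r'+r'')/2$ and setting $\tilde x_j = x \pm (r'-r'')/(2\eta\eps^\beta)$ so that $\eps^{-\beta}(r+r'_j) + \eta x = \eps^{-\beta}\tilde r + \eta \tilde x_j$ (with $|\tilde x_1 - \tilde x_2| = |r'-r''|/(\eta\eps^\beta)$), Theorem \ref{thm:diffusive} identifies the leading term with $|M_{1,1}(z,\tilde r,\tilde x_1,\tilde x_2)|^2$. The substitution $s = (r'-r'')/(\eta\eps^\beta)$, $t = (r'+r'')/2$ produces a Jacobian $(\eta\eps^\beta)^d$, and since $a_\eps \gg \eta\eps^\beta$ the $s$-range covers $\Rm^d$ asymptotically. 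Using $\sL^2$-integrability of $s \mapsto M_{1,1}(z,\tilde r,s/2,-s/2)$ uniformly in $\tilde r$, visible from \eqref{eqn:mu_2_diffusive_beta>1} and \eqref{eqn:mu_2_diffusive_beta=1} via the Gaussian factor induced by $\Gamma = \nabla^2 R(0) < 0$, I conclude $\mathrm{Var}(I^\eps_D) \le C(\eta\eps^\beta/a_\eps)^d \to 0$.

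The main obstacle is the Isserlis reduction at the level of moments rather than merely in distribution: the argument needs convergence of $\E[|\phi^\eps_1|^2|\phi^\eps_2|^2]$ to its Gaussian value $\E[|\phi_1|^2]\E[|\phi_2|^2] + |\E[\phi_1\phi_2^\ast]|^2$, which requires either uniform integrability of $|\phi^\eps|^4$ or a direct treatment of the explicit fourth-moment equations. Precisely this control is the subject of Section \ref{sec:highermoments} as developed for Theorem \ref{thm:diffusive}, so it can be invoked rather than re-derived; indeed, the proof of Theorem \ref{thm:diffusive} already supplies the error bounds that drive the $o(1)$ term above uniformly over the relevant range of $(\tilde r, \tilde x_1, \tilde x_2)$. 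The remaining ingredient, the $\sL^2$-decay of $M_{1,1}$ in its spatial-separation argument, is a direct inspection of the formulas of Section \ref{sec:firstmoments}.
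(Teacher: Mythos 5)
Your proposal is correct and follows essentially the same route as the paper: both compute the second moment of $I^\eps_D$, use the uniform Gaussian (Wick) approximation of the fourth moment from Section \ref{sec:highermoments} to reduce the covariance to $|M_{1,1}|^2$ evaluated at separations of order $|r'-r''|/(\eta\eps^\beta)\to\infty$, and conclude that the variance vanishes so that Chebyshev gives convergence in probability. The only difference is cosmetic: you extract a quantitative rate $(\eta\eps^\beta/a_\eps)^d$ via an explicit change of variables, whereas the paper argues the limit of $\E[(I^\eps_D)^2]$ qualitatively from the decay of $M_{1,1}$ at infinite separation.
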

%%%

The corollaries are also proved in section \ref{sec:final}.  These results display the multi-scale structure of the wave field. While it remains statistically unstable at the scale $(z,r)$, it becomes self-averaging when spatially averaged over domains that are much larger than $\eta$ (at the microscopic scale $x$ or much larger than $\eps^\beta\eta$ at the macroscopic scale $r$).

The above results apply to finite dimensional distributions of $\phi^\eps$. With minor additional constraints on $R(x)$ and the incident beam profile $u_0^\eps(x)$, we can in fact prove that for each fixed $(r,z)$, $\phi^\eps$ has a H\"older continuous version and the family of measures it generates is tight. More precisely, we have the following result. 

\begin{theorem}[Stochastic continuity and tightness]\label{thm:tightness}
    Assume $\aver{k}^{2\alpha_R}\hat R(k) \in \sL^1(\Rm^d)$ for $\alpha_R\in(0,1]$ in the kinetic regime and $\alpha_R=1$ in the diffusive regime. 
    Assume an incident condition of the form \eqref{eq:u0plane} (or \eqref{eq:u0}) with a bound $\|\aver{k}^{2\alpha}\hat \sff_\sm(k)\|\leq C$ (or $\|\aver{k}^{2\alpha}\hat \sff(k)\|\leq C$) for $\alpha_R\geq \alpha>0$. 

    Let $\alpha_0=\alpha$ in the kinetic regime and $0<\alpha_0<\alpha=1$ in the diffusive regime.
    For $C(z,\alpha_0)$ independent of $\eps\in(0,1]$, we have
    \begin{equation}\label{eq:tightness}
       \sup_{s\in [0,z]} \E |\phi^\eps(s,r,x+h) - \phi^\eps(s,r,x)|^{2n} \leq C(z,\alpha_0) |h|^{2\alpha_0 n},\quad \mbox{ for all} \ h\in \sB(0,1)\subset \Rm^d.
    \end{equation}
    Choosing $n$ large enough so that $2\alpha_0 n\geq d+2\alpha_- n$ for $0<\alpha_-<\alpha_0$ shows that for each $z>0$, a version of $\phi^\eps(z)$ is H\"older continuous in $C^{0,\alpha_-}(\Rm^d)$ and the process $\phi^\eps(z)$ is tight on $C^{0,\alpha_-}(\Rm^d)$. The latter continuity and tightness properties also hold for the centered process $\phi^\eps(z)-\E\phi^\eps(z)$.
\end{theorem}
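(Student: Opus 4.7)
The strategy is to establish the Kolmogorov-type increment bound \eqref{eq:tightness} and then invoke Kolmogorov-Chentsov continuity together with the classical tightness criterion in H\"older spaces. The main reduction is that the $2n$-th moment of $\delta\phi^\eps := \phi^\eps(s,r,x+h)-\phi^\eps(s,r,x)$ factors, up to a controlled error, through $(\E|\delta\phi^\eps|^2)^n$. Expanding $|\delta\phi^\eps|^{2n}$ as a signed binomial sum of $(n,n)$-products of $\phi^\eps$ at points in $\{x, x+h\}$, and inserting the Gaussian summation rule representation from Section~\ref{sec:highermoments} (that is, $m^\eps_{n,n}$ equals a symmetric sum over pairings of second-moment kernels plus an $o(1)$ remainder), all contributions that do not produce a double second difference cancel by the alternating signs. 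What survives at leading order is $n!\,(\E|\delta\phi^\eps|^2)^n$, while the remainder inherits the same double-difference structure and is therefore estimable by the Fourier bound below.

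The core estimate is thus $\E|\delta\phi^\eps|^2 \leq C(z)|h|^{2\alpha_0}$. Starting from the Fourier representation $\phi^\eps(s,r,x) = \int e^{i\xi\cdot(\eps^{-\beta}r + \eta x)}\hat u^\eps(s,\xi)\,d\xi$, a direct computation yields
\begin{equation*}
    \E|\delta\phi^\eps|^2 = \iint e^{i(\xi-\xi')\cdot(\eps^{-\beta}r+\eta x)} (e^{i\eta\xi\cdot h}-1)\overline{(e^{i\eta\xi'\cdot h}-1)}\,\mathcal{K}^\eps(s,\xi,\xi')\,d\xi\,d\xi',
\end{equation*}
with $\mathcal{K}^\eps(s,\xi,\xi') = \E[\hat u^\eps(s,\xi)\overline{\hat u^\eps(s,\xi')}]$. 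The elementary inequality $|e^{i\theta}-1| \leq 2^{1-\alpha_0}|\theta|^{\alpha_0}$ combined with $\eta \leq 1$ gives $|(e^{i\eta\xi\cdot h}-1)\overline{(e^{i\eta\xi'\cdot h}-1)}| \leq C|h|^{2\alpha_0}\aver{\xi}^{\alpha_0}\aver{\xi'}^{\alpha_0}$, so the whole expression is controlled by $C|h|^{2\alpha_0}\iint \aver{\xi}^{\alpha_0}\aver{\xi'}^{\alpha_0}|\mathcal{K}^\eps|\,d\xi\,d\xi'$. The last integral is uniformly bounded in $\eps$ and $s\in[0,z]$ by appealing to the explicit representation of $\mathcal{K}^\eps$ from Section~\ref{sec:firstmoments}: after the standard change to difference and sum frequencies, the second-moment equation integrates along characteristics and contributes a multiplicative factor $\exp(\frac{k_0^2 s}{4\eta^2}\int_0^1 Q(\ldots)\,d\sigma) \leq 1$ since $Q := R-R(0) \leq 0$, reducing the estimate to a weighted $\sL^1$-bound on $\hat u_0^\eps(\xi)\overline{\hat u_0^\eps(\xi')}$ that follows from $\|\aver{k}^{2\alpha}\hat\sff_\sm\|_\infty \leq C$ and the Schwartz decay of $\hat\sff_\sm$.

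With \eqref{eq:tightness} in hand, a standard application of Kolmogorov-Chentsov with $n$ chosen so that $2\alpha_0 n > d + 2\alpha_- n$ produces a H\"older-$\alpha_-$ modification of $\phi^\eps(z,r,\cdot)$ on every bounded subset of $\Rm^d$, with uniform-in-$\eps$ moment bounds on the $C^{0,\alpha_-}$ seminorm. The compact embedding $C^{0,\alpha_0} \hookrightarrow C^{0,\alpha_-}$ for $\alpha_- < \alpha_0$, together with a diagonal argument over an exhausting sequence of balls, gives tightness on $C^{0,\alpha_-}_{\loc}(\Rm^d)$; the centered process inherits the same bounds since $\E\phi^\eps$ is a deterministic oscillatory function whose H\"older seminorms are uniformly bounded by the Schwartz regularity of $\sff_\sm$. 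The main obstacle lies in the diffusive regime: the factor $1/\eta^2$ multiplying $Q$ forces the damping to concentrate on frequencies of order $\eta^{-1}$, and controlling the resulting high-frequency tail requires a careful truncation using the Hessian $\Gamma = \nabla^2 R(0) < 0$, which introduces a logarithmic loss at the endpoint $\alpha_R = 1$ and is the reason only $\alpha_- < 1$ (rather than $\alpha_- = 1$) can be attained.
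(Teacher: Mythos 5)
Your proposal reproduces only one half of the paper's argument, and the missing half is essential. The reduction of $\E|\delta\phi^\eps|^{2n}$ to $n!\,(\E|\delta\phi^\eps|^2)^n$ via the Gaussian summation rule carries an additive remainder that is \emph{not} small in $|h|$: Theorem \ref{thm:mupqphys} and Corollary \ref{coro:M_pq_moments} control the error $\mathscr{E}^\eps_{n,n}$ only in the uniform norm, by ${\mathcal O}(\eps^{1/3})$, because the underlying operator $E^\eps_{n,n}$ is bounded only in (unweighted) total variation on the Fourier side. Since $\E|\delta\phi^\eps|^{2n}$ is the $2n$-fold mixed finite difference of $m^\eps_{n,n}$ evaluated on the diagonal, and a $2n$-fold finite difference of a function that is merely bounded by $\eps^{1/3}$ is still only ${\mathcal O}(\eps^{1/3})$, your assertion that ``the remainder inherits the same double-difference structure'' would require weighted TV bounds of the form $\|\prod_j\aver{\xi_j}^{\alpha_0}\prod_l\aver{\zeta_l}^{\alpha_0}\,E^\eps_{n,n}\hat\mu^\eps_{n,n}(0)\|\lesssim 1$, which are nowhere established (all of section \ref{sec:highermoments} works in unweighted TV, and propagating such weights through the evolution is a substantial extra piece of analysis). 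Your route therefore yields at best $\E|\delta\phi^\eps|^{2n}\le C(|h|^{2\alpha_0 n}+\eps^{\frac13})$, which fails to give \eqref{eq:tightness} uniformly in $\eps$ as soon as $|h|\ll\eps^{\frac{1}{6n\alpha_0}}$; uniformity in $\eps$ is precisely what the tightness claim requires.

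The paper closes this gap with a separate small-$|h|$ argument that your proposal does not contain: Itô calculus produces a closed hierarchy of evolution equations for the mixed moments $\E\prod\varphi^\eps_j\prod\delta_h\varphi^\eps_l$ (equation \eqref{eq:deltameps}), whose source terms are multiplications by $\delta_hR^\eps$ and $\delta_h^2R^\eps$ and are bounded in dual TV norm by $C|h|^{\alpha}\eta^{\alpha-2}$ and $C|h|^{2\alpha}\eta^{2\alpha-2}$ (Lemma \ref{lem:bddhR}); iterating from $q=2n$ down to $q=0$ gives the bound $|h|^{2n\alpha}\eta^{-2n(2-\alpha)}$. This alone settles the kinetic regime (where $\eta=1$ and where, note, your weighted-Fourier second-moment computation is not even needed), and in the diffusive regime it is interpolated at $|h|=\eps^{1/(6n)}$ against the Gaussian-factorization estimate—essentially your route, which is valid only for $|h|$ not too small relative to $\eps$. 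Incidentally, your stated reason for $\alpha_-<1$ in the diffusive regime (a ``logarithmic loss at the endpoint $\alpha_R=1$'' from truncating high frequencies using the Hessian) does not correspond to the actual mechanism: the loss comes from the factor $\eta^{-2n}=(\ln\ln\eps^{-1})^{2n}$ in the small-$|h|$ branch, which is absorbed as $|h|^{-g}$ for arbitrary $g>0$ on the range $|h|\le\eps^{1/(6n)}$.
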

The proof is given in section \ref{sec:final}.
\begin{corollary}[Convergence of processes]\label{cor:conv}
    Under the hypotheses of the preceding theorem, the processes whose finite dimensional distributions are shown to converge in Theorems \ref{thm:kinetic} and \ref{thm:diffusive}  (and Remark \ref{rem:cv}) in the kinetic and diffusive regimes converge in distribution as probability measures on $C^{0,\alpha_-}(\Rm^d)$. 
\end{corollary}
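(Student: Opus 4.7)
The plan is to combine the convergence of finite-dimensional distributions established in Theorems \ref{thm:kinetic} and \ref{thm:diffusive} (and Remark \ref{rem:cv}) with the tightness asserted in Theorem \ref{thm:tightness}, and conclude via Prokhorov's theorem together with the continuous mapping theorem.

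First, I would fix $z>0$ and view $\phi^\eps(z,r,\cdot)$ (or its centred version $\phi^\eps(z,r,\cdot)-\mathbb{E}[\phi^\eps(z,r,\cdot)]$, in the kinetic case of Theorem \ref{thm:kinetic}) as a random element of the Polish space $C^{0,\alpha_-}(\mathbb{R}^d)$. Theorem \ref{thm:tightness} directly provides tightness of the corresponding family of laws on this space. By Prokhorov's theorem applied to the Polish space $C^{0,\alpha_-}(\mathbb{R}^d)$, any subsequence $\eps_n \to 0$ admits a further subsequence along which the laws converge weakly to some Borel probability measure $\mu$.

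Next, for any finite collection $X=(x_1,\ldots,x_N)\in (\mathbb{R}^d)^N$, the evaluation map $f\mapsto (f(x_1),\ldots,f(x_N))$ is Lipschitz, hence continuous, from $C^{0,\alpha_-}(\mathbb{R}^d)$ into $\mathbb{C}^N$. The continuous mapping theorem therefore identifies the finite-dimensional marginals of the subsequential limit $\mu$ with the limits already computed in Theorems \ref{thm:kinetic} and \ref{thm:diffusive}, or Remark \ref{rem:cv}, namely the finite-dimensional marginals of the circularly symmetric complex Gaussian process there described. Since any Borel probability measure on $C^{0,\alpha_-}(\mathbb{R}^d)$ is uniquely determined by its values on the $\sigma$-algebra generated by pointwise evaluations at a countable dense subset of $\mathbb{R}^d$, and since elements of $C^{0,\alpha_-}(\mathbb{R}^d)$ are determined by their values on such a dense set, $\mu$ is independent of the chosen subsequence. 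A standard subsequence argument then upgrades this to convergence in distribution along the full family $\eps\to 0$, giving the asserted convergence of the processes as probability measures on $C^{0,\alpha_-}(\mathbb{R}^d)$.

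The substance of the argument is contained in Theorems \ref{thm:kinetic}, \ref{thm:diffusive}, and \ref{thm:tightness}; no further estimates are needed. The only mildly delicate point, which is handled upstream in Theorem \ref{thm:tightness}, is the combination of the Kolmogorov-type increment bound \eqref{eq:tightness} with control of the field at a base point in order to produce tightness on the full Hölder space over the non-compact domain $\mathbb{R}^d$. Once that ingredient is granted, the corollary is a routine application of Prokhorov's theorem and the continuous mapping theorem, and does not involve any new analysis of the random wave process.
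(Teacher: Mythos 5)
Your proposal is correct and is essentially the paper's own argument: the paper disposes of this corollary in one line by citing the standard fact that tightness together with convergence of finite-dimensional distributions yields convergence in distribution, which is exactly the Prokhorov--plus--marginal-identification reasoning you spell out. No discrepancy to report.
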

The proof is an immediate consequence of the fact that tight processes converge in the above sense when their finite dimensional distributions converge \cite{billingsley2017probability,kunita1997stochastic}. Under appropriate smoothness conditions on $\hat R$ and $\hat \sff_\sm$ (or $\hat\sff$), we may choose $\alpha_R=\alpha=1$ with $\alpha_-\in [0,1)$ arbitrary.

As a direct extension of the above results, we also readily obtain that a vector of processes $\phi^\eps(z,r_j,x)$ for different values of $r_j$ for $1\leq j\leq N$ converges in the same sense to their independent limits $\phi(z,r_j,x)$. It is quite possible that the processes satisfies joint continuity properties in $(z,x)$ rather than separately in $z$ as shown in \cite{dawson1984random} when $\eps=1$ and $x$ as shown in the above theorem combining the above theory with the martingale techniques in \cite{dawson1984random,gu2021gaussian}. This is not considered here.

\paragraph{Main steps of the derivation.} The main technical part of the derivation is a detailed analysis of the statistical moments of $u^\eps$ and $\phi^\eps$ as $\eps\to0$. For $X=(x_1,\cdots,x_p)\in \Rm^{pd}$ and $Y=(y_1,\cdots,y_q)\in\Rm^{qd}$, we define the $p+q$th moment of $u^\eps$  and $\phi^\eps$ as
\begin{equation}\label{eq:mupq}
\mu_{p,q}^\eps(z,X,Y)=\mathbb{E}\Big[\prod_{j=1}^pu^\eps(z,x_j)\prod_{l=1}^q{u^{\eps*}}(z,y_l)\Big],
    \quad m_{p,q}^\eps(z,r,X,Y) = \E \Big[\prod_{j=1}^p \phi^\eps(z,r,x_j)\prod_{l=1}^q \phi^{\eps\ast}(z,r,y_l)\Big].
\end{equation}
It is well known that such moments satisfy closed-form equations. They are given explicitly in \eqref{eq:mupqpde} below. This is a specificity of the It\^o-Schr\"odinger regime of wave propagation. The above terms allow us to express arbitrary moments of the random vector $\Phi^\eps$ in \eqref{eq:Phieps} as well. Denote by $v\in\Rm^{(p+q)d}$ the dual (Fourier) variables to $(X,Y)$, by $\hat\mu^\eps_{p,q}(z,v)$ the Fourier transform of $\mu_{p,q}^\eps(z,X,Y)$ in \eqref{eq:hatmupq}, and finally by $\psi^\eps_{p,q}(z,v)$ a phase compensated wave field such that
\[
 \hat\mu^\eps_{p,q}(z,v) = \Pi^\eps_{p,q}(z,v)  \psi^\eps_{p,q}(z,v), \quad \Pi^\eps_{p,q}(z,v) = e^{-\frac{iz\eta}{2k_0\eps}  \, v^{\mathrm t} \Theta v}.
\]
Here, $\Theta$ is a diagonal matrix of $p$ ones followed by $q$ minus ones; see \eqref{eq:Phipq}. The phase compensation ensures that $\psi^\eps_{p,q}$ would be constant in $z$ if propagation occurred in a homogeneous medium with $R(x)\equiv0$.

The moments $\psi^\eps_{p,q}(z,v)$ of the compensated wave field satisfy closed form equations of the form
\[
  (\partial_z - L^\eps_{p,q}) \psi^\eps_{p,q}=0,\quad L^\eps_{p,q} = \sum_j L^\eps_j
\]
with each operator $L^\eps_j$ of the form
\[
   L^\eps_j \rho (z,v) = \frac{c_j}{\eta^2} \dint_{\Rm^d}  \hat R(k) e^{\frac{iz\eta}{2k_0\eps}  (v,k)^{\mathrm t} \check A_j  (v,k)} \rho(z,v- A_j k) dk,
\]
for appropriate matrices $(A_j,\check A_j)$ that are described in detail in section \ref{sec:highermoments}. The highly oscillatory phases generate a number of cancellations as $\eps\to0$, which we leverage to obtain the following result. Let $U^\eps_{p,q}(z)$ be the solution operator of the evolution equation $(\partial_z - L^\eps_{p,q})\psi=0$ mapping a boundary condition at $z=0$ to a solution at a given $z>0$. We will obtain that $U^\eps_{p,q}$ is a bounded operator on the Banach space  ${\mathcal M}_B(\Rm^{pd+qd})$ of complex-valued, finite, signed Borelian measures with norm given by their total variation (TV). Any finite signed measure may be decomposed as its real and imaginary part, and for each such part, $\nu=\nu_+-\nu_-$ for $\nu_\pm$ positive signed measures so that the total variation $\|\nu\|:=|\nu|(\Rm^{pd+qd})=\nu_+(\Rm^{pd+qd})+\nu_-(\Rm^{pd+qd})$. The total variation of the complex-valued measure is then the sum of the total variations of its real and imaginary components.
The operators $L^\eps_j$ above are clearly bounded in operator norm as operators acting on ${\mathcal M}_B(\Rm^{pd+qd})$. We also denote by $\|\cdot\|$ such an operator norm. 

Throughout the paper, we use the notation $\|\cdot\|$ exclusively as either the total variation norm of a bounded measure or as the uniform norm of operators acting on spaces of bounded measures. We use the norms $\|\cdot\|_\infty$ and $\|\cdot\|_1$ as the uniform (supremum) norm and the $\sL^1$-norm of bounded and integrable measurable functions, respectively.

We will show in Theorem \ref{thm:Ueps} below that $U^\eps_{p,q}= N^\eps_{p,q} + E^\eps_{p,q}$ where $N^\eps_{p,q}$ corresponds to the $(p,q)$ moment of a circular complex Gaussian variable and where 
\[
  \| E^\eps_{p,q} \| \ll c(p,q,z) \eps^{\frac13},
\]
for the choice $\eta^{-1}=\ln\ln\eps^{-1}$ and for some $c(p,q,z)$ bounded on compact domains. This shows that in the Fourier domain and after phase compensation, the $(p,q)$ moments $\psi^\eps_{p,q}$ are approximately equal to moments coming from a circular complex normal variable.

An advantage of the Banach space ${\mathcal M}_B(\Rm^{pd+qd})$ is that upon inverse Fourier transform, the error term $E^\eps_{p,q}$ translates into a controlled error in the uniform sense in the physical variables. For incident beams of the form \eqref{eq:u0}, this allows us to write %%\gb{change exponents}
\begin{eqnarray} \nonumber
      \mu_{p,q}^{\eps }(z,X,Y) \! &=&\! \!\mathscr{F}(\mu^\eps_{1,0}(z,x_1),\cdots,\mu^\eps_{1,0}(z,x_p),\mu^\eps_{0,1}(z,y_1),\cdots,\mu^\eps_{0,1}(z,y_q),\mu^\eps_{1,1}(z,x_1,y_1),\cdots,\mu^\eps_{1,1}(z,x_p,y_q))\\ & + &  \!\!{\mathcal O}_{\|\cdot\|_\infty}(\eps^{\frac13}c(p,q,z)) 
      \label{eq:mupqlimit}
\end{eqnarray}
where $\mathscr{F}$ defined in \eqref{eq:F} below is the continuous functional describing $(p,q)$ moments of a complex circular Gaussian variable in terms of its first- and second- moments. This also shows that $\phi^\eps(z,X,Y)$ has moments approximately given by those of a complex Gaussian variable. Since such moments characterize their distribution, we deduce that $\Phi^\eps$ converges in distribution to its normal limit provided the right-hand side in \eqref{eq:mupqlimit} converges. Since $\mathscr{F}$ is continuous, this is a consequence of the convergence of first- and second- moments to their limits. This convergence, and the constraint that $\|\hat\phi^\eps_{p,q}(0,v)\|$ be of order ${\mathcal O}(1)$ in total variation dictates our choice of incident beam profiles \eqref{eq:u0} and \eqref{eq:u0plane}. 

For incident profiles in \eqref{eq:u0}, we will obtain convergence of first- and second- moments in both the kinetic and the diffusive regimes.  For incident beam profiles \eqref{eq:u0plane}, the first and second moments converge in the diffusive regime but remain oscillatory in the kinetic regime as $\eps\to0$. What we obtain instead is that the centered second moments converge. This allows us to still express the $p+q$th moments of the centered variable $\tilde{u}^\eps=u^\eps-\mathbb{E}[u^\eps]$ as 
\begin{eqnarray}\label{eqn:mu_bar_pq}
   \tilde{\mu}^\eps_{p,q}(z,X,Y)= \mathbb{E}[\prod\limits_{j=1}^p\tilde{u}^\eps(z,x_j)\prod\limits_{j=1}^q{\tilde{u}^{\eps\ast}(z,y_l)}]=\widetilde{\mathscr{F}}(\tilde{\mu}^\eps_{1,1}(z,x_1,y_1),\cdots,\tilde{\mu}^\eps_{1,1}(z,x_p,y_q))+ {\mathcal O}_{\|\cdot\|_\infty}(\eps^{\frac13}),
   \end{eqnarray}
where $\widetilde{\mathscr{F}}$ defined in \eqref{eqn:bar_F} below, is a continuous functional which describes the moments of a complex circular Gaussian random variable in terms of its centered second moments $\tilde{\mu}^\eps_{1,1}(z,x,y)$.

First and second moments $\mu^\eps_{0,1}, \mu^\eps_{1,0}, \mu^\eps_{1,1}$ and hence centered second moments $\tilde\mu^\eps_{1,1}$, satisfy closed form expressions. These and their limiting behavior are presented in detail in section \ref{sec:firstmoments}. They fully characterize the limiting distributions expressed in Theorems \ref{thm:kinetic}, \ref{thm:diffusive}, Remark \ref{rem:cv}, and Corollary \ref{cor:conv}.

The final step of the derivation consists in proving results of stochastic continuity and tightness for the process $\phi^\eps$. This is done by looking at moments of $\phi^\eps(z,r,x+h)-\phi^\eps(z,r,x)$ and showing that they satisfy equations similar to those for $\mu^\eps_{p,q}$, except that these equations involve source terms that are bounded in TV norm in the dual variables.

\section{Limiting expressions for first and second statistical moments}
\label{sec:firstmoments}

Using It\^o's formula for Hilbert space-valued processes \cite{garnier2016fourth,miyahara1982stochastic}, we obtain closed-form partial differential equations for the moments of copies of $u^\eps$. These equations are not solvable explicitly except for first and second moments. This section recalls such equations and analyzes their solutions in the scintillation regime.

\subsection{First moment}
The first moment may be obtained by realizing that $\E udB=0$ and taking mathematical expectation in \eqref{eq:uSDEd}. Defining $\mu_{1,0}^\eps(z,x)=\mathbb{E}[u^\eps(z,x)]$, we have
\begin{equation}\label{eq:mu1}
    \partial_z\mu_{1,0}^\eps=\frac{i\eta}{2k_0\eps} \Delta_x \mu_{1,0}^\eps-\frac{k_0^2}{8\eta^2}R(0)\mu_{1,0}^\eps,\quad \mu_{1,0}^\eps(0,x)=u_0^\eps(x)\,.
\end{equation}
Defining the Fourier transform
\begin{equation*}
  \hat{\mu}_{1,0}^\eps(z,\xi)=\int_{\Rm^d}
  \mu_{1,0}^\eps(z,x)e^{-i\xi\cdot x}\mathrm{d}x\,,
\end{equation*}
we obtain
\begin{equation*}
    \partial_z\hat{\mu}_{1,0}^\eps=\Big(-\frac{i\eta}{2k_0\eps}|\xi|^2-\frac{k_0^2}{8\eta^2}R(0)\Big)\hat{\mu}_{1,0}^\eps,\quad \hat{\mu}_{1,0}^\eps(0,\xi)=\hat{u}_0^\eps(\xi)\,.
\end{equation*}
This can be solved explicitly, which after an inverse Fourier transform gives 
\begin{equation*}
    \mu_{1,0}^\eps(z,x)=e^{-\frac{k_0^2}{8\eta^2}R(0)z}\int_{\Rm^d}\hat{u}_0^\eps(\xi)e^{-\frac{iz\eta}{2k_0\eps}|\xi|^2}e^{i\xi\cdot x} \frac{\mathrm{d}\xi}{(2\pi)^d}\,.
\end{equation*}

When $u_0^\eps$ is of the form in~\eqref{eq:u0}, after a change of variables $\eps^{-\beta}\xi\to \xi$, we obtain 
\begin{equation*}
     m^\eps_{1,0} (z,r,x) = \mu_{1,0}^\eps(z,\eps^{-\beta} r + \eta x)=e^{-\frac{k_0^2}{8\eta^2}R(0)z}\int_{\Rm^d}\hat{\sff}(\xi)e^{-\frac{iz\eta}{2k_0}\eps^{2\beta-1}|\xi|^2}e^{i(r+\eps^\beta\eta x)\cdot \xi} \frac{\mathrm{d}\xi}{(2\pi)^d},
\end{equation*}
with $m_{1,0}^\eps(z,r,x)=\mathbb{E}\phi^\eps(z,r,x)$. Let $M_{1,0}(z,r,x)=\lim_{\eps\to 0}m_{1,0}^\eps(z,r,x)$. Since $\hat{\sff}(\xi)$ is integrable, we obtain from the Lebesgue dominated convergence theorem that in the limit $\eps\to0$,
\begin{equation}\label{eqn:mu_1_limit}
M_{1,0}(z,r,x)=
    \begin{cases}
        e^{-\frac{k_0^2}{8}R(0)z} \,\sff (r),\quad &\eta(\eps)=1\\
        0,\quad &\eta(\eps)\to 0\,.
    \end{cases}
\end{equation}
The moment $M_{0,1}=M_{1,0}^*$ is obtained by complex conjugation.

When the incident beam is of the form \eqref{eq:u0plane}, we still obtain that $M_{1,0}(z,r,x)=0$ when $\eta\ll 1$. However, when $\eta=1$, 
\begin{eqnarray}
    m^\eps_{1,0}(z,r,x) %= \mu^\eps_{1,0}(z,\eps^{-\beta}r+x)
    =e^{-\frac{k_0^2}{8}R(0)z}\sum\limits_{\sm=1}^\sM
    \int_{\Rm^d} \hat{\sff}_\sm(\xi)e^{-\frac{iz}{2k_0\eps}|\eps^\beta\xi+\sk_\sm|^2}e^{i\xi\cdot (r+\eps^\beta x)}e^{i\sk_\sm\cdot (\eps^{-\beta}r+x)} \frac{\mathrm{d}\xi}{(2\pi)^d},
\end{eqnarray}
which remains highly oscillatory as $\eps\ll 1$. Nevertheless, as will be shown in section \ref{subsec:second_moment}, the centered second moment still converges in the limit $\eps\to0$.
\subsection{Second moment}\label{subsec:second_moment}
Let us now consider the $p=q=1$ moment $\mu_{1,1}^\eps(z,x,y)=\mathbb{E}[u^\eps(z,x){u^{\eps*}}(z,y)]$. Using It\^o's formula for the Hilbert space-valued process in \eqref{eq:uSDEd}, we obtain
\begin{equation}
    \partial_z\mu_{1,1}^\eps=\frac{i\eta}{2k_0\eps}(\Delta_x-\Delta_y)\mu_{1,1}^\eps+\frac{k_0^2}{4\eta^2} \big( R(x-y)-R(0) \big)\mu_{1,1}^\eps,\quad \mu_{1,1}^\eps(0,x,y)=u_0^\eps(x){u_0^{\eps*}}(y)\,.
\end{equation}
Introduce the variables $\tau=y-x, r=\frac{1}{2}(x+y)$ so that we obtain for $\breve{\mu}_{1,1}^\eps(z,r,\tau)=\mu_{1,1}^\eps(z,x,y)$ 
that
\begin{equation*}
    \partial_z\breve{\mu}_{1,1}^\eps=-\frac{i\eta}{k_0\eps}\partial_r\cdot\partial_\tau\breve{\mu}_{1,1}^\eps+\frac{k_0^2}{4\eta^2}\big( R(\tau)-R(0)\big)\breve{\mu}_{1,1}^\eps\,.
\end{equation*}
Defining the Fourier transform w.r.t. $r$ as 
$\check{\mu}_{1,1}^\eps(z,\xi,\tau)=\int_{\Rm^d}\breve{\mu}_{1,1}^\eps(z,r,\tau)e^{-i\xi\cdot r}\mathrm{d}r$, we obtain
\begin{equation*}
    \big( \partial_z-\frac{\eta}{k_0\eps}\xi\cdot\partial_\tau \big)\check{\mu}_{1,1}^\eps=\frac{k_0^2}{4\eta^2} \big( R(\tau)-R(0) \big)\check{\mu}_{1,1}^\eps\,.
\end{equation*}
This can be solved explicitly as
\begin{equation*}%\label{eqn:mu_check} 
\check{\mu}_{1,1}^\eps(z,\xi,\tau)=\check{\mu}_{1,1}^\eps(0,\xi,\tau+\frac{\eta\xi z}{k_0\eps}) \ e^{\frac{k_0^2z}{4\eta^2}\int_{0}^1Q(\tau+\frac{\eta\xi s z}{k_0\eps})\mathrm{d}s}\, = \check{\mu}_{1,1}^\eps(0,\xi,\tau+\frac{\eta\xi z}{k_0\eps}) \ \mQ(\tau,\frac{\eta \xi}\eps),
\end{equation*}
where we have defined terms that we will be using repeatedly as:
\begin{equation}\label{eqn:Q}
    0\geq Q(x):=R(x)-R(0)\qquad \mbox{ and } \qquad \mQ(\tau,\alpha) := \exp \Big( \frac{k_0^2z}{4\eta^2}\int_0^1 Q\big(\tau+\alpha\frac{sz}{k_0}\big) ds\Big).
\end{equation}
Transforming the above equality back to spatial coordinates gives %%%\an{writing what is below in terms of $\breve\mu_{1,1}^\eps$. We can go back to $\mu_{1,1}^\eps$ if needed} %%% Yes, \breve is what I wanted to correct as well.
\begin{equation}\label{eqn:mu_2}
\breve\mu_{1,1}^\eps(z,r,\tau)=\int_{\Rm^{2d}} e^{i\xi\cdot(r-r')}\breve{\mu}_{1,1}^\eps(z=0,r',\tau+\frac{\eta\xi z}{k_0\eps}) \, \mQ(\tau,\frac{\eta\xi}{\eps})\,  \frac{\mathrm{d}\xi\mathrm{d}r'}{(2\pi)^d},
\end{equation}
where 
\begin{equation*}
    \breve{\mu}_{1,1}^\eps(z=0,r,\tau)=u_0^\eps\big(r-\frac{\tau}{2}\big){u_0^{\eps*}}\big(r+\frac{\tau}{2}\big)=\int\limits_{\mathbb{R}^{2d}}\hat{u}_0^\eps(\xi){\hat{u}_0^{\eps*}}(\zeta)e^{ir\cdot(\xi-\zeta)}e^{-i\frac{\tau}{2}\cdot(\xi+\zeta)}\frac{\mathrm{d}\xi\mathrm{d}\zeta}{(2\pi)^{2d}}\,.
\end{equation*}
Using \eqref{eq:mupq}, the corresponding moment of $\phi^\eps$, $m^\eps_{1,1}(z,r,x,y)= \mu^\eps_{1,1}(z,\eps^{-\beta} r+\eta x, \eps^{-\beta}r+\eta y)$ is given by
\begin{equation}\label{eq:meps11}
    m^\eps_{1,1}(z,r,x,y)=\int\limits_{\mathbb{R}^{2d}} e^{i\eps^{-\beta}r\cdot(\xi-\zeta)}e^{i\eta(\xi\cdot x-\zeta\cdot y)}e^{-\frac{iz\eta}{2k_0\eps}(|\xi|^2-|\zeta|^2)}\hat{u}_0^\eps(\xi){\hat{u}_0^{\eps*}}(\zeta)\mQ\big(\eta(y-x),\frac{\eta(\xi-\zeta)}{\eps}\big)\frac{\mathrm{d}\xi\mathrm{d}\zeta}{(2\pi)^{2d}}\,.
\end{equation}
When the source is a superposition of plane wave modulated sources~\eqref{eq:u0plane}, we have that
\begin{equation}\label{eqn:m_11_plane_wave}
    m^\eps_{1,1}(z,r,x,y)=\sum\limits_{\sm=1}^\sM\sum\limits_{\sn=1}^\sM\mathcal{I}_{\sm,\sn}^\eps(z,r,x,y)\,,
\end{equation}
%where
\begin{equation}\label{eqn:I_mn} 
  \begin{aligned}
    \mathcal{I}_{\sm,\sn}^\eps(z,r,x,y)
    = & \int_{\mathbb{R}^{2d}}
    \hat{\sff}_\sm(\xi)\, \hat{\sff}_{\sn}^*(\zeta)
    e^{i(\eps^\beta\xi+\sk_\sm)\cdot(\eps^{-\beta}r+\eta x)}
    e^{-i(\eps^\beta\zeta+\sk_{\sn})\cdot(\eps^{-\beta}r+\eta y)}
    \\ &\quad \times  
    e^{-\frac{iz\eta}{2k_0\eps}(|\eps^\beta\xi+\sk_\sm|^2-|\eps^\beta\zeta+\sk_{\sn}|^2)} \mQ\big(\eta(y-x), \frac\eta\eps(\eps^\beta(\xi-\zeta) + \sk_\sm-\sk_{\sn})\big)
    \frac{\mathrm{d}\xi \mathrm{d}\zeta}{(2\pi)^{2d}}\,,
\end{aligned}
\end{equation}
where we have used a change of variables $\eps^{-\beta}(\xi-\sk_{\sm})\to\xi, \eps^{-\beta}(\zeta-\sk_{\sn})\to\zeta$.  In particular, when $u_0^\eps$ is of the form~\eqref{eq:u0}, we have
\begin{equation}\label{eqn:meps11_f0}
    m^\eps_{1,1}(z,r,x,y)=\int\limits_{\mathbb{R}^{2d}} 
    e^{ir\cdot(\xi-\zeta)}
    e^{i\eta\eps^\beta(\xi\cdot x-\zeta\cdot y)}
    e^{-\frac{iz\eta\eps^{2\beta-1}}{2k_0}(|\xi|^2-|\zeta|^2)}\hat{\sff}(\xi)\hat{\sff}^\ast(\zeta)\mQ\big(\eta(y-x),\eta\eps^{\beta-1}(\xi-\zeta)\big)\frac{\mathrm{d}\xi\mathrm{d}\zeta}{(2\pi)^{2d}}\,.
\end{equation}
We now analyze the limit of $m^\eps_{1,1}$ as $\eps\to0$. Depending on $\eta(\eps)$ and the form of the incident beam at $z=0$, we need to consider several scenarios.

\paragraph{Kinetic regime $\eta(\eps)=1$.}
For incident beams of the form \eqref{eq:u0}, we now show that the limit $M_{1,1}(z,r,x,y):=\lim_{\eps\to 0}m^\eps_{1,1}(z,r,x,y)$ exists.

For $\beta>1$, as $\lim_{\eps\to 0}\int_{0}^1Q(y-x+\frac{\eps^{\beta-1}(\xi-\zeta) s z}{k_0})\mathrm{d}s=Q(y-x)$ pointwise by assumption on $R(x)$, we obtain, for instance by the Lebesgue dominated theorem, that 
\begin{equation}\label{eqn:mu_2_kinetic_beta>1}
      M_{1,1}(z,r,x,y)=|\sff(r)|^2\mQ(y-x,0) = |\sff(r)|^2e^{\frac{k_0^2z}{4}Q(y-x)}\,.
\end{equation}
Similarly, when $\beta=1$, we obtain
\begin{equation}\label{eqn:mu_2_kinetic_beta=1}
    M_{1,1}(z,r,x,y)=\int_{\Rm^{2d}} e^{i\xi\cdot(r-r')}|\sff(r')|^2
    %e^{\frac{k_0^2z}{4}\int_{0}^1Q(y-x+\frac{\xi s z}{k_0})\mathrm{d}s}
    \mQ(y-x,\xi)
     \frac{\mathrm{d}\xi\mathrm{d}r'}{(2\pi)^d}\,.
\end{equation}

For incident beams as defined in \eqref{eq:u0plane} and as seen from \eqref{eqn:m_11_plane_wave}-\eqref{eqn:I_mn}, several terms remain highly oscillatory as $\eps\to 0$.    However, the central second moment
\begin{eqnarray*}
        \tilde{m}^\eps_{1,1}(z,r,x,y)=m^\eps_{1,1}(z,r,x,y)-m^\eps_{1,0}(z,r,x)m^\eps_{0,1}(z,r,y)
\end{eqnarray*}
converges as $\eps\to 0$. To see this, note that
\begin{equation*}
        \tilde{m}^\eps_{1,1}(z,r,x,y)=\sum_{\sm=1}^\sM\sum_{\sn=1}^\sM{\tilde{\mathcal{I}}}^\eps_{\sm,\sn}(z,r,x,y)\,,
\end{equation*}
\begin{equation*}
        \begin{aligned}
         {\tilde{\mathcal{I}}}^\eps_{\sm,\sn}(z,r,x,y)&=\int_{\mathbb{R}^{2d}}\hat{\sff}_\sm(\xi)\, \hat{\sff}_{\sn}^*(\zeta)
    e^{i(\eps^\beta\xi+\sk_\sm)\cdot(\eps^{-\beta}r+ x)}
    e^{-i(\eps^\beta\zeta+\sk_{\sn})\cdot(\eps^{-\beta}r+ y)}e^{-\frac{iz}{2k_0\eps}(|\eps^\beta\xi+\sk_\sm|^2-|\eps^\beta\zeta+\sk_{\sn}|^2)}
    \\ &\quad \times  
    \big(\mQ\big(y-x,\eps^{\beta-1}(\xi-\zeta)+\frac{\sk_\sm-\sk_\sn}{\eps}\big)
    -e^{-\frac{k_0^2z}{4}R(0)}\big)
     \frac{\mathrm{d}\xi \mathrm{d}\zeta}{(2\pi)^{2d}}\,.   
        \end{aligned}
\end{equation*}
When $\sm=\sn$, we verify that pointwise, $\lim_{\eps\to 0}{\tilde{\mathcal{I}}}^\eps_{\sm,\sm}(z,r,x,y)={\tilde{\mathcal{I}}}_{\sm,\sm}(z,r,x,y)$ with
\begin{equation}\label{eqn:plane_wave_mod}
     {\tilde{\mathcal{I}}}_{\sm,\sm}(z,r,x,y):=\begin{cases}
        |\sff_\sm(r)|^2e^{-i\sk_\sm\cdot(y-x)}(e^{\frac{k_0^2z}{4}Q(y-x)}-e^{-\frac{k_0^2z}{4}R(0)}), &\beta>1,\\
        e^{-i\sk_\sm\cdot(y-x)}\dint_{\Rm^{2d}} \!\!\! e^{i\xi\cdot(r-r'-\frac{\sk_\sm z}{k_0})}|\sff_\sm(r')|^2
        \big(
        %e^{\frac{k_0^2z}{4}\int_{0}^1Q(\tau+\frac{\xi s z}{k_0})\mathrm{d}s}
        \mQ(y-x,\xi)
        -e^{-\frac{k_0^2z}{4}R(0)}\big) \frac{\mathrm{d}\xi\mathrm{d}r'}{(2\pi)^d}, &\beta=1.
    \end{cases}
\end{equation}
When $\sm\neq \sn$,
\begin{eqnarray*}
        |\tilde{\mathcal{I}}^\eps_{\sm,\sn}|(z,r,x,y)\le \int_{\mathbb{R}^{2d}}|\hat{\sff}_\sm(\xi)||\hat{\sff}_{\sn}^*(\zeta)| \Big|
        %e^{\frac{k_0^2z}{4}\!\int_{0}^1\! Q(\tau+\frac{\eps^{\beta-1}(\xi-\zeta) s z}{k_0}+\frac{(k_m-k_{m'}) s z}{k_0\eps})\mathrm{d}s}
        \mQ(y-x,\eps^{\beta-1}(\xi-\zeta)+\frac{\sk_\sm-\sk_{\sn}}{\eps})
        -e^{-\frac{k_0^2z}{4}R(0)}\Big|
       \frac{\mathrm{d}\xi \mathrm{d}\zeta}{(2\pi)^{2d}}\xrightarrow{\eps\to 0} 0\,,
\end{eqnarray*}
as for each $(\xi,\zeta,z)$ fixed and $\sk_\sm\not= \sk_{\sn}$ (and then using Lebesgue's dominated convergence), we have 
\[ \lim_{\eps\to 0}\int_{0}^1\! Q(\tau+\frac{\eps^{\beta-1}(\xi-\zeta) s z}{k_0}+\frac{(\sk_\sm-\sk_{\sn}) s z}{k_0\eps})\mathrm{d}s=  \tb{-} R(0),\quad \mbox{ or } \quad \lim_{|\tau'|\to\infty} \mQ(\tau,\tau') = e^{-\frac{k_0^2z}{4}R(0)}.\] 
%%% \gb{This probably requires a bit more detail, at least for us? In particular, we need to handle $\xi-\zeta$ large (should be fine). The sign was also incorrect?}
This means that only the terms with $\sm=\sn$ contribute in the limit $\eps\to 0$, and we have %for $\tau=y-x$,
\begin{eqnarray}\label{eqn:mu_2_plane_wave_super_pos}
         \widetilde{M}_{1,1}(z,r,x,y)=\lim_{\eps\to 0}\tilde{m}^\eps_{1,1}(z,r,x,y)=\sum\limits_{\sm=1}^\sM\tilde{\mathcal{I}}_{\sm,\sm}(z,r,x,y)\,,
\end{eqnarray}
where $\tilde{\mathcal{I}}_{\sm,\sm}$ is given by~\eqref{eqn:plane_wave_mod}. The centered second moment due to a broad incident beam $u_0^\eps(x)=\sff(\eps^\beta x)$ is also given by~\eqref{eqn:mu_2_plane_wave_super_pos}  (with $\sk_\sm=0$ identically). This result also shows that when $\eps\ll 1$, cross-correlations due to plane wave modulated incident beams with different transverse wavenumbers become negligible, and the limiting centered second moment is simply the sum of centered second moments due to individual sources. 
%%%
\paragraph{Diffusive regime with $\eta^{-1}=\ln\ln\eps^{-1}$.}
%%%
Since $R$ is sufficiently smooth and maximum at $x=0$, we write $Q(x)=\frac{1}{2}x^\tps\Gamma x+o(|x|^2)$ with $\Gamma=\nabla^2R(0)$. 
When $\beta>1$, this gives us the limit 
  \begin{equation*}
      \lim_{\eps\to 0}\frac{k_0^2z}{4\eta^2}\int_{0}^1Q(\eta(y-x)+\frac{\eps^{\beta-1}\eta(\xi-\zeta) s z}{k_0})\mathrm{d}s=\frac{k_0^2z}{8}(y-x)^\tps\Gamma(y-x)\,,
  \end{equation*}
and hence for sources of the form \eqref{eq:u0}, setting $\eta^{-1}=\ln\ln\eps^{-1}$ in~\eqref{eqn:meps11_f0} gives the limit
\begin{equation}\label{eqn:mu_2_diffusive_beta>1}
      M_{1,1}(z,r,x,y)=|\,\sff(r)|^2e^{\frac{k_0^2z}{8}(y-x)^\tps\Gamma(y-x)}\,.
\end{equation}
This shows that the correlation function of the limiting random vector $\phi$ decays exponentially as $z|x-y|^2$ increases.

When $\beta=1$, we obtain instead
  \begin{equation*}
       \lim_{\eps\to 0}\frac{k_0^2z}{4\eta^2}\int_{0}^1Q(\eta(y-x)+\frac{\eps^{\beta-1}\eta(\xi-\zeta) s z}{k_0})\mathrm{d}s=\frac{k_0^2z}{8}\int_{0}^1\big(y-x+\frac{(\xi-\zeta) s z}{k_0}\big)^\tps\Gamma\big(y-x+\frac{(\xi-\zeta) s z}{k_0}\big)\mathrm{d}s,
  \end{equation*}
so that
\begin{equation}\label{eqn:mu_2_beta_10}
    \begin{array}{rcl}
       M_{1,1}(z,r,x,y)&=&\dint_{\Rm^{2d}} e^{i\xi\cdot(r-r')}|\,\sff(r')|^2\exp\Big(\frac{k_0^2z}{8}\int_{0}^1\big(y-x+\frac{\xi s z}{k_0}\big)^\tps \Gamma\big(y-x+\frac{\xi s z}{k_0}\big)\mathrm{d}s\Big) \dfrac{\mathrm{d}\xi\mathrm{d}r'}{(2\pi)^d}\,
\\[4mm]
     &=& e^{\frac{k_0^2}{32} z \tau^\tps\Gamma\tau}e^{i\xi_0\cdot r} 
      \dint_{\Rm^{d}}  \Big( \dint_{\Rm^d} e^{i\xi\cdot(r-r')} e^{\frac{z^3}{24}\xi^\tps\Gamma \xi} \dfrac{d\xi}{(2\pi)^d}  \Big) [e^{-i\xi_0\cdot r'}|\sff(r')|^2] dr'
       \end{array}
\end{equation}
with $\tau=y-x$ and $\xi_0=-\frac{3k_0\tau}{2z}$ after evaluating the integrals in $s$ and completing squares in $(\xi-\xi_0)$. Introduce the Green's function of the following parabolic equation
\begin{equation}\label{eq:GFM11}
    (\partial_t + \frac1{24} \nabla_r\cdot \Gamma \nabla_r) G(t,r)=0,\quad G(0,r)=\delta_0(r).
\end{equation}
Then we deduce that 
\begin{equation}\label{eqn:mu_2_diffusive_beta=1}
    M_{1,1}(z,r,x,y)=  e^{\frac{k_0^2}{32} z (y-x)^\tps\Gamma(y-x)}e^{-i\frac{3k_0}{2z}(y-x)\cdot r} \  G(z^3, r) * [e^{i\frac{3k_0}{2z}(y-x)\cdot r} |\sff|^2(r)].
\end{equation}
Here, we write $f(r)*g(r)$ for the convolution $(f*g)(r)$.
This expression holds for $z>0$ while $M_{1,1}(0,r,x,y)=|\,\sff(r)|^2$. We thus obtain that $M_{1,1}$ is decaying exponentially as $z|x-y|^2$ increases and that it is otherwise given by {\color{black}an unusual diffusion propagator $G(t,\cdot)$ with diffusion tensor $-\Gamma/24$ evaluated at $t=z^3$} and applied to a $z-$dependent function. Note that for large $z$ such that $z|y-x|^2$ remains of order ${\mathcal O}(1)$, the phase $e^{-i\frac{3k_0}{2z}(y-x)\cdot r}$ is increasingly negligible.

When $x=y$, we obtain that $M_{1,1}(z,r,x,x)=I_2(z,r)$ the mean intensity solution of the following diffusion equation
\begin{equation}\label{eqn:I2_diffusion}
 \partial_{z^3}I_2+\frac1{24}\nabla_r\cdot\Gamma\nabla_rI_2=0, \qquad I_2(0,r)= \left\{\begin{array}{cl} |\sff(r)|^2\ & \ \mbox{ when} \,u_0^\eps \, \mbox{ is given by } \, \eqref{eq:u0} \\  \dsum_{\sm=1}^\sM|\sff_\sm(r)|^2\ &  \ \mbox{ when} \,u_0^\eps \, \mbox{ is given by } \, \eqref{eq:u0plane}.\end{array} \right.
\end{equation}
We recall that $\Gamma$ is negative-definite.
We also verify that when the incident beam is a superposition of modulated plane waves \eqref{eq:u0plane},% for $\tau=y-x$,
\begin{eqnarray}\label{eq:m11diffplane}
    M_{1,1}(z,r,x,y)=\sum\limits_{m=1}^M\lim_{\eps\to 0}\mathcal{I}^\eps_{\sm,\sm}(z,r,x,y)\,,
\end{eqnarray}
with $\lim_{\eps\to 0}\mathcal{I}^\eps_{\sm,\sm}$ is given by ~\eqref{eqn:mu_2_diffusive_beta>1} when $\beta>1$ and~\eqref{eqn:mu_2_diffusive_beta=1} when $\beta=1$ with $\sff\equiv \sff_\sm$. 
 Indeed, for $\sm\not=\sn$, we obtain as in the kinetic regime that $|\tilde{\mathcal{I}}_{\sm,\sn}|\to0$. Since the first moments also converge to $0$, we deduce that $\mathcal{I}_{\sm,\sn}=0$ in the limit $\eps\to0$. It remains to realize that $\sk_\sm$ does not appear directly in the limit $\eta\to0$ to conclude that \eqref{eq:m11diffplane} holds.

We summarize the above results as follows.
\begin{lemma}[First and second moment limits]\label{lem:limmts} 
  Let $m^\eps_{1,0}(z,r,x)=\E \phi^\eps(z,r,x)$, $m^\eps_{0,1}(z,r,x)=m^{\eps*}_{1,0}(z,r,x)$, $m^\eps_{1,1}(z,r,x,y) = \E \phi^\eps(z,r,x) \phi^{\eps*}(z,r,y)$ and $\tilde m^\eps_{1,1}(z,r,x,y)=m^\eps_{1,1}(z,r,x,y)-m^\eps_{1,0}(z,r,x)m^{\eps*}_{1,0}(z,r,y)$.

  For an incident condition \eqref{eq:u0}, $m^\eps_{1,0}$ converges to $M_{1,0}$ given in \eqref{eqn:mu_1_limit} in the kinetic and diffusive regimes. $m^\eps_{1,1}$ converges to $M_{1,1}$, which in the kinetic regime is given in \eqref{eqn:mu_2_kinetic_beta>1} when $\beta>1$ and in \eqref{eqn:mu_2_kinetic_beta=1} when $\beta=1$. In the diffusive regime, $M_{1,1}$ is given by \eqref{eqn:mu_2_diffusive_beta>1} when $\beta>1$ and \eqref{eqn:mu_2_diffusive_beta=1} when $\beta=1$.
  
  For an incident condition \eqref{eq:u0plane}, $\tilde m^\eps_{1,1}$ converges to $\widetilde M_{1,1}$ given in \eqref{eqn:plane_wave_mod}-\eqref{eqn:mu_2_plane_wave_super_pos} in the kinetic regime. In the diffusive regime, $m^\eps_{1,0}$ converges to $0$ while $m^\eps_{1,1}$ converges to $M_{1,1}$ given by \eqref{eqn:mu_2_diffusive_beta>1},\eqref{eq:m11diffplane} when $\beta>1$ and by \eqref{eqn:mu_2_diffusive_beta=1},\eqref{eq:m11diffplane}  when $\beta=1$.
\end{lemma}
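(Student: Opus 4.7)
The lemma is essentially a compilation of the computations already carried out in Section 3, so my plan is to organize those derivations into a single unified proof by treating each of the four cases (kinetic/diffusive $\times$ beam type \eqref{eq:u0}/\eqref{eq:u0plane}) in turn, with the Ito--Schrodinger closed-form moment equations as the starting point and Lebesgue's dominated convergence theorem as the unifying analytical tool.

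First I would handle the first moment. The PDE \eqref{eq:mu1} has Fourier symbol $-i\eta|\xi|^2/(2k_0\eps) - k_0^2 R(0)/(8\eta^2)$, and after the spatial rescaling $x\to \eps^{-\beta}r+\eta x$ and change of variables $\eps^{-\beta}\xi\to\xi$ in Fourier space, the integrand is dominated by $|\hat\sff(\xi)|$, which is integrable by the Schwartz hypothesis. The pointwise limit of the oscillatory factor is $1$ for beam \eqref{eq:u0} and immediate pointwise computation then gives \eqref{eqn:mu_1_limit}; in the diffusive regime the prefactor $\exp(-k_0^2 R(0)z/(8\eta^2))\to 0$, yielding the zero limit. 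For beam \eqref{eq:u0plane} in the kinetic regime, the phase $e^{-iz|\sk_\sm|^2/(2k_0\eps)}$ cannot be absorbed and $m^\eps_{1,0}$ stays oscillatory, which is why we pass to centered moments.

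Next I would treat the second moment. Following the center-of-mass / difference coordinates $\tau=y-x$, $r=(x+y)/2$ and Fourier transforming in $r$, the PDE becomes a transport equation in $\tau$ that is solved by the method of characteristics; re-inverting gives the representation \eqref{eqn:mu_2}, from which \eqref{eq:meps11}, \eqref{eqn:I_mn}, \eqref{eqn:meps11_f0} follow. The limits are then read off pointwise from $\mQ(\tau,\alpha)$ defined in \eqref{eqn:Q}. For beam \eqref{eq:u0}, the integrand is dominated by $|\hat\sff(\xi)||\hat\sff(\zeta)|$ (uniformly in $\eps$, since $|\mQ|\leq 1$ because $Q\leq 0$), and dominated convergence gives \eqref{eqn:mu_2_kinetic_beta>1}--\eqref{eqn:mu_2_kinetic_beta=1} in the kinetic regime and \eqref{eqn:mu_2_diffusive_beta>1}--\eqref{eqn:mu_2_diffusive_beta=1} in the diffusive regime; in the latter, I would expand $Q(x)=\tfrac12 x^{\tps}\Gamma x+o(|x|^2)$ using the smoothness of $R$ at the origin (Hypothesis \ref{hyp:R}) to identify the quadratic limit of $k_0^2 z/(4\eta^2)\int_0^1 Q(\eta(\cdot))ds$. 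For $\beta=1$ in the diffusive regime, I would complete the square in $\xi$ around $\xi_0=-3k_0\tau/(2z)$ and recognize the resulting Gaussian kernel in $\xi$ as the Fourier inverse of the heat kernel $G(z^3,\cdot)$ from \eqref{eq:GFM11}, giving \eqref{eqn:mu_2_diffusive_beta=1}.

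The main obstacle is the plane-wave case \eqref{eq:u0plane} in the kinetic regime, where the oscillatory phase $\exp(-iz(|\eps^\beta\xi+\sk_\sm|^2-|\eps^\beta\zeta+\sk_\sn|^2)/(2k_0\eps))$ in $\mathcal{I}^\eps_{\sm,\sn}$ does not converge. The resolution is to subtract the outer product of first moments, and to observe that the cross-term $\mathcal{I}^\eps_{\sm,\sn}$ with $\sm\neq\sn$ contains the factor $\mQ(\tau,(\sk_\sm-\sk_\sn)/\eps + O(\eps^{\beta-1}))$, whose argument diverges as $\eps\to 0$. Under the integrability assumption on $s\mapsto R(\tau+se)$ in Hypothesis \ref{hyp:R}, one gets $\lim_{|\alpha|\to\infty}\int_0^1 Q(\tau+\alpha sz/k_0)ds = -R(0)$, so $\mQ(\tau,\alpha)\to e^{-k_0^2 z R(0)/4}$, which is exactly the rapidly oscillating factor carried by $m^\eps_{1,0}(z,r,x)m^\eps_{0,1}(z,r,y)$; writing
\begin{equation*}
  \tilde m^\eps_{1,1}=\sum_{\sm,\sn}\mathcal{I}^\eps_{\sm,\sn} - m^\eps_{1,0}m^\eps_{0,1}=\sum_{\sm,\sn}\tilde{\mathcal{I}}^\eps_{\sm,\sn}
\end{equation*}
replaces $\mQ$ by $\mQ-e^{-k_0^2 z R(0)/4}$ and the cross-terms then vanish by dominated convergence while the diagonal $\sm=\sn$ terms converge to $\tilde{\mathcal{I}}_{\sm,\sm}$ in \eqref{eqn:plane_wave_mod}, yielding \eqref{eqn:mu_2_plane_wave_super_pos}. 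The diffusive case with incident profile \eqref{eq:u0plane} is easier since $m^\eps_{1,0}\to 0$ and the same vanishing-of-off-diagonals argument gives \eqref{eq:m11diffplane}.
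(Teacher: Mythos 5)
Your proposal is correct and follows essentially the same route as the paper: explicit Fourier-space solution of the first-moment equation, the transport equation in $(\xi,\tau)$ variables giving the representation \eqref{eqn:mu_2}, dominated convergence using $|\mQ|\le 1$, the Taylor expansion $Q(x)=\tfrac12 x^\tps\Gamma x+o(|x|^2)$ with completion of squares for the diffusive $\beta=1$ case, and the cancellation of the oscillatory off-diagonal terms via $\lim_{|\tau'|\to\infty}\mQ(\tau,\tau')=e^{-k_0^2 zR(0)/4}$ matched against the product of first moments. No gaps.
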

All convergence results have been obtained point-wise. More uniform estimates may be obtained under mild assumptions on $R(x)$ and the incident beam conditions. Such estimates will be obtained in the proof of Theorem \ref{thm:tightness}.

%%%%
\section{Higher order moments and the Gaussian summation rule}\label{sec:highermoments}
%%%
This section presents the main technical part of this paper. For $X=(x_1,\cdots,x_p)$ and $Y=(y_1,\cdots,y_q)$, recall that the $p+q$th moment of $u^\eps$ is defined in \eqref{eq:mupq}. Using It\^o's formula once more for Hilbert-valued processes \cite{garnier2016fourth}, we deduce from \eqref{eq:uSDEd} that
\begin{equation}\label{eq:mupqpde}
\begin{aligned}
        \partial_z\mu_{p,q}^\eps&= {\mathcal L}^\eps_{p,q}\mu_{p,q}^\eps,\qquad  {\mathcal L}^\eps_{p,q} :=  \frac{i\eta}{2k_0\eps}\big(\sum_{j=1}^p\Delta_{x_j}-\sum_{l=1}^q\Delta_{y_j}\big)+\frac{k_0^2}{4\eta^2}\mathcal{U}_{p,q}(X,Y)\\
        \mu_{p,q}^\eps(0,X,Y)&=\prod_{j=1}^pu_0^\eps(x_j)\prod_{l=1}^q{u_0^{\eps*}}(y_l)\,.
\end{aligned}
\end{equation}
The potential $\mathcal{U}_{p,q}$ is given by
\begin{equation}\label{eq:Upot}
    \begin{aligned}
        \mathcal{U}_{p,q}(X,Y)&=\sum_{j=1}^p\sum_{l=1}^qR(x_j-y_l)-\sum_{1\le j< j'\le p}R(x_j-x_{j'})-\sum_{1\le l< l'\le q}R(y_l-y_{l'})-\frac{p+q}{2}R(0)\,.
    \end{aligned}
\end{equation}
The limiting behavior of these moments is analyzed in the Fourier variables. Define the (complex symmetrized) Fourier transform of $\mu^\eps_{p,q}$ as
\begin{equation}\label{eq:hatmupq}
    \hat{\mu}^\eps_{p,q}(z,v)=\int_{\Rm^{(p+q)d}} \mu^\eps_{p,q}(z,X,Y)e^{-i(\sum_{j=1}^p\xi_j\cdot x_j-\sum_{l=1}^q\zeta_l\cdot y_l)}\mathrm{d}x_1\cdots\mathrm{d}x_p\mathrm{d}y_1\cdots\mathrm{d}y_q\,,
\end{equation}
where $v=(\xi_1,\cdots,\xi_p,\zeta_1,\cdots,\zeta_q)$ denotes the dual variables to $(X,Y)$. Then $\hat{\mu}_{p,q}^\eps$ satisfies %the evolution equation
\begin{equation}\label{eq:mupqF}   
\begin{aligned}
        {\partial_z\hat{\mu}^\eps_{p,q}}&=-\frac{i\eta}{2k_0\eps}\Big(\sum_{j=1}^p|\xi_j|^2-\sum_{l=1}^q|\zeta_l|^2\Big)\hat{\mu}^\eps_{p,q}+\frac{k_0^2}{4\eta^2}\int_{\Rm^d}\hat{R}(k)\Big[\sum_{j=1}^p\sum_{l=1}^q\hat{\mu}^\eps_{p,q}(\xi_j-k,\zeta_l-k)\\
    & \ \ \ \ -\sum_{1\le j< j'\le p} \!\!\! \hat{\mu}^\eps_{p,q}(\xi_j-k,\xi_{j'}+k)-\sum_{1\le l< l'\le q}\hat{\mu}^\eps_{p,q}(\zeta_l-k,\zeta_{l'}+k)-\frac{p+q}{2}\hat{\mu}^\eps_{p,q}\Big] \dfrac{\mathrm{d}k}{(2\pi)^d}\,,\\
        \hat{\mu}^\eps_{p,q}(0,v)&=\prod_{j=1}^p\hat{u}^\eps_0(\xi_j)\prod_{l=1}^q\hat{u}^{\eps\ast}_0(\zeta_l)\,.
    \end{aligned}
\end{equation}
\subsection{Moments of compensated wave field and integro-differential equations}
It is convenient to recast \eqref{eq:mupqF} as an integro-differential equation with a highly oscillatory kernel. We first need to introduce some notation.
Let $\mathbb{I}_d$ be the $d\times d$ identity matrix and $\Lambda_0$ denote a $(p+q)\times (p+q)$ diagonal matrix. We define the $d(p+q)\times d(p+q)$ matrix $\Theta$ as
\begin{equation*}
    \Theta=\Lambda_0\otimes \mathbb{I}_d\,,\quad  \{\Lambda_0\}_{m,m}=\begin{cases}
        1,\quad m=1,\cdots p\\
        -1,\quad m=p+1,\cdots, q\,.
    \end{cases}
\end{equation*}
This makes $\Theta$ a block diagonal matrix of $p$ ones followed by $q$ negative ones. We also define a number of $d(p+q)\times d$ matrices as follows. Let $\lambda_{1}, \lambda_2$ below denote $(p+q)\times 1$ vectors. For $j=1,\cdots,p, l=1,\cdots,q$, we define $A_{j,l}$ as:
\begin{equation*}
    A_{j,l}=\lambda_1\otimes \mathbb{I}_d\,,\qquad   \{\lambda_1\}_m=\begin{cases}
        1,\quad m=j, p+l\\
        0,\quad \text{otherwise}\,.
\end{cases}
\end{equation*}
In other words, blocks $j$ and $p+l$ of $A_{j,l}$ are identity and the rest vanish. Finally, for $1\le j< j'\le p$ and $p+1\le j< j'\le p+q$, define $B_{j,j'}$ as:
\begin{equation*}
    B_{j,j'}=\lambda_2\otimes \mathbb{I}_d\,,\qquad \{\lambda_2\}_m=\begin{cases}
        1,\quad m=j\\
        -1,\quad m=j'\\
        0,\quad \text{otherwise}\,.
\end{cases}
\end{equation*}
This means that the $j$th block of $B_{j,j'}$ is $\mathbb{I}_d$ and the $j'$th block is $-\mathbb{I}_d$ with the rest vanishing.  Under this operator formulation,~\eqref{eq:mupqF} takes the form
\begin{equation*}
\begin{aligned}
    \partial_z\hat{\mu}_{p,q}^\eps&=-\frac{i\eta}{2\eps k_0}v^\tps\Theta v\hat{\mu}_{p,q}^\eps+\frac{k_0^2}{4\eta^2}\int_{\Rm^d}\hat{R}(k)\Big[\sum_{j=1}^p\sum_{l=1}^q\hat{\mu}^\eps_{p,q}(v-A_{j,l}k)\\
    &-\sum_{1\le j< j'\le p}\hat{\mu}^\eps_{p,q}(v-B_{j,j'}k)-\sum_{p+1\le l< l'\le p+q}\hat{\mu}^\eps_{p,q}(v-B_{l,l'}k)-\frac{p+q}{2}\hat{\mu}^\eps_{p,q}\Big] \dfrac{\mathrm{d}k}{(2\pi)^d}\,.
\end{aligned}
   \end{equation*}

Next we introduce the phase compensated wave field moments $\psi_{p,q}^\eps$ such that
\begin{equation}\label{eqn:phase_correc}
    \hat{\mu}_{p,q}^\eps=\Pi^\eps_{p,q}(z,v)\psi_{p,q}^\eps\,,
\end{equation}
where $\Pi^\eps_{p,q}$ captures propagation in free space and is  given explicitly by
\begin{equation}\label{eq:Phipq}
    \Pi^\eps_{p,q}(z,v)=e^{-\frac{iz\eta}{2k_0\eps}(\sum_{j=1}^p|\xi_j|^2-\sum_{l=1}^q|\zeta_l|^2)} = e^{-\frac{iz\eta}{2k_0\eps} v^\tps\Theta v }.
\end{equation}
Using~\eqref{eq:mupqF} and the operator formulation above allows us to obtain that
\begin{equation*}
    \begin{aligned}
      {\partial_z\psi_{p,q}^\eps}&=\frac{k_0^2}{4\eta^2}e^{\frac{iz\eta}{2k_0\eps}v^\tps\Theta v}\int_{\Rm^d}\hat{R}(k)\Big[\sum_{j=1}^p\sum_{l=1}^q\psi_{p,q}^\eps(v-A_{j,l}k)e^{-\frac{iz\eta}{2k_0\eps}(v-A_{j,l}k)^\tps\Theta(v-A_{j,l}k)}\\
    &-\sum_{1\le j< j'\le p}\psi_{p,q}^\eps(v-B_{j,j'}k)e^{-\frac{iz\eta}{2k_0\eps}(v-B_{j,j'}k)^\tps\Theta(v-B_{j,j'}k)}\\
    &-\sum_{p+1\le l< l'\le p+q}\psi_{p,q}^\eps(v-B_{l,l'}k)e^{-\frac{iz\eta}{2k_0\eps}(v-B_{l,l'}k)^\tps\Theta(v-B_{l,l'}k)}-\frac{p+q}{2}\psi_{p,q}^\eps(v)e^{-\frac{iz\eta}{2k_0\eps}v^\tps\Theta v}\Big]\frac{\mathrm{d}k} {(2\pi)^d}\,,\\
       \psi_{p,q}^\eps(0,v)&=\hat{\mu}_{p,q}(0,v)\,.   
    \end{aligned}
\end{equation*}
This translates into the evolution equation
\begin{equation}\label{eqn:psi_evolution}
   \partial_z\psi_{p,q}^\eps=L^\eps_{p,q}\psi_{p,q}^\eps,\qquad \psi_{p,q}^\eps(0,v)=\hat{\mu}_{p,q}(0,v)\,,
\end{equation}
where the operator $L^\eps_{p,q}$ is given by
\begin{equation}\label{eqn:L_expansion}
     {L}^{\eps}_{p,q}=\frac{p+q}{2}L_{\eta}+\sum_{j=1}^p\sum_{l=1}^q{L}^{\eps,1}_{j,l}+\sum_{1\le j< j'\le p}L^{\eps ,2}_{j,j'}+\sum_{p+1\le l< l'\le p+q}L^{\eps ,2}_{l,l'} = : \frac{p+q}{2}L_{\eta} + L^{\eps,1} + L^{\eps,2}\,,
 \end{equation}
  with obvious notation for $L^{\eps,\kappa}$ for $\kappa=1,2$ and  
 \begin{equation}\label{eqn:L_pq_def}
     \begin{aligned}
          L_{\eta}&=-\frac{k_0^2}{4\eta^2}\int_{\Rm^d}\hat{R}(k) \frac{\mathrm{d}k} {(2\pi)^d}=-\frac{C_0}{\eta^2},\qquad C_0=\frac{k_0^2}{4}R(0)\,,\\
         {L}^{\eps ,1}_{j,l}\rho(v)&=\frac{k_0^2}{4\eta^2}\int_{\Rm^d}\hat{R}(k)\rho(v-A_{j,l}k)e^{\frac{iz\eta}{k_0\eps}k^\tps A_{j,l}^\tps\Theta v}\frac{\mathrm{d}k} {(2\pi)^d}\,,\\
         L^{\eps ,2}_{j,j'}\rho(v)&=-\frac{k_0^2}{4\eta^2}\int_{\Rm^d}\hat{R}(k)\rho(v-B_{j,j'}k)e^{\frac{iz\eta}{2k_0\eps}(2k^\tps B_{j,j'}^\tps\Theta v-k^\tps B_{j,j'}^\tps\Theta B_{j,j'}k)}\frac{\mathrm{d}k} {(2\pi)^d}\,.
     \end{aligned}
 \end{equation}
Here we have used the fact that 
 \begin{equation*}
     \begin{aligned}
         (v-A_{j,l}k)^\tps\Theta(v-A_{j,l}k)&=v^\tps\Theta v-2k^\tps A_{j,l}^\tps\Theta v\, \quad \mbox{ since } \quad A_{j,l}^\tps\Theta A_{j,l}=0.
     \end{aligned}
 \end{equation*}
 Now define $U^\eps(z)$ as the solution operator of \eqref{eqn:psi_evolution}, which solves the evolution equation
 \begin{equation}\label{eqn:sol_operator_evolution}
     \partial_zU^\eps=L^\eps_{p,q}U^\eps,\qquad U^\eps(0)=\mathbb{I}\,.
 \end{equation}
For easier presentation, we will drop the $\eps$ dependence of the operators in~\eqref{eqn:L_pq_def} and~\eqref{eqn:sol_operator_evolution} and retain it only in the solution operator $U^\eps$. 
We recall that $\mathcal{M}_B(\mathbb{R}^{pd+qd})$ denotes the Banach space of finite signed measures on $\mathbb{R}^{pd+qd}$ equipped with the total variation norm $\|\cdot\|$ and $\mathbb{I}$ above denotes identity on that space.
\begin{lemma}\label{lemma:L_pq_bound}
   The operator $L_{p,q}$ is bounded on $\mathcal{M}_B(\mathbb{R}^{pd+qd})$ with operator norm uniform in $z$:
    \begin{equation*}
        \|L_{p,q}\|\le \frac{C_0(p+q)^2}{2\eta^2}\,.
    \end{equation*}
    \begin{proof}
       Note that for any $\rho\in\mathcal{M}_B(\mathbb{R}^{pd+qd})$,
       \begin{equation*}
           \|L_{p,q}\rho\|\le \frac{p+q}{2}\|L_{\eta}\rho\|+\sum_{j=1}^p\sum_{l=1}^q\|L^1_{j,l}\rho\|+\sum_{1\le j< j'\le p}\|L^2_{j,j'}\rho\|+\sum_{p+1\le l< l'\le p+q}\|L^{2}_{l,l'}\rho\|\,,
       \end{equation*}
       where $\|\cdot\|$ denotes the total variation. For $1\le j\le p, 1\le l\le q$,
       \begin{equation*}
           \|L_{j,l}^1\rho\|=\Big\|\frac{k_0^2}{4\eta^2}\int_{\Rm^d} \hat{R}(k)\rho(v-A_{j,l}k)e^{\frac{iz\eta}{k_0\eps}k^\tps A_{j,l}^\tps\Theta v}\frac{\mathrm{d}k} {(2\pi)^d}\Big\|\le \frac{k_0^2}{4\eta^2}\int_{\Rm^d}\hat{R}(k)\|\rho(v-A_{j,l}k)\|\frac{\mathrm{d}k} {(2\pi)^d}=\frac{C_0}{\eta^2}\|\rho\|\,.
       \end{equation*}
       A similar argument applies to $L^2_{j,j'}$ as well, which completes the proof.       
    \end{proof}
\end{lemma}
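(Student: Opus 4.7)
The plan is to apply the triangle inequality for the total variation norm to the decomposition \eqref{eqn:L_expansion}, which writes $L_{p,q}$ as a sum of $L_\eta$ (with prefactor $(p+q)/2$), the $pq$ operators $L^1_{j,l}$, and the $\binom{p}{2}+\binom{q}{2}$ operators $L^2_{j,j'}$. This reduces the statement to bounding each individual building block on $\mathcal{M}_B(\Rm^{pd+qd})$ by $C_0/\eta^2$ and then summing up a combinatorial factor.

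For each $L^1_{j,l}$ and $L^2_{j,j'}$, I would exploit the fact that the action on a finite complex measure $\rho$ is an integral over $k\in\Rm^d$ of a translated and phase-modulated copy of $\rho$ against the nonnegative density $\hat R(k)$. The total variation norm is invariant under both the translation $\rho\mapsto \rho(\cdot - Ak)$ and multiplication by any unimodular function $e^{i\phi(v,k)}$. Hence, after an application of Fubini — legitimate because $\hat R\in \sL^1(\Rm^d)$ and $\|\rho\|<\infty$, splitting $\rho$ into its real and imaginary Hahn--Jordan parts if one wants to be scrupulous — I get
\[
\|L^\kappa \rho\|\;\le\;\frac{k_0^2}{4\eta^2}\Bigl(\int_{\Rm^d}\hat R(k)\,\tfrac{\mathrm{d}k}{(2\pi)^d}\Bigr)\|\rho\|\;=\;\frac{C_0}{\eta^2}\|\rho\|,
\]
using the identity $\int \hat R(k)\,\tfrac{\mathrm{d}k}{(2\pi)^d}=R(0)$. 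The operator $L_\eta$ is simply multiplication by the scalar $-C_0/\eta^2$, so it is trivially bounded by $C_0/\eta^2$ as well.

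It then remains to collect the combinatorial factor: $(p+q)/2$ from the $L_\eta$ contribution, $pq$ from the $L^1_{j,l}$ sum, and $\binom{p}{2}+\binom{q}{2}=\tfrac12(p^2+q^2-p-q)$ from the two $L^2$ sums, which add up to exactly $(p+q)^2/2$, yielding the stated bound $\|L_{p,q}\|\le \frac{C_0(p+q)^2}{2\eta^2}$. The uniformity in $z$ is automatic, since $z$ enters only through the unimodular phases which are killed by taking total variations.

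The argument is entirely routine and I do not foresee any real obstacle; the only point deserving a line of care is the Fubini step for complex-valued measures, which is immediate once one writes $\rho = \rho_{1,+}-\rho_{1,-}+i(\rho_{2,+}-\rho_{2,-})$ and applies the nonnegative version of Tonelli to each piece using $\hat R\in \sL^1$.
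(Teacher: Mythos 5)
Your proposal is correct and follows essentially the same route as the paper: triangle inequality over the decomposition \eqref{eqn:L_expansion}, a bound of $C_0/\eta^2$ on each building block using the nonnegativity and integrability of $\hat R$ together with the invariance of the total variation norm under translation and unimodular phase multiplication, and the combinatorial count $\frac{p+q}{2}+pq+\binom{p}{2}+\binom{q}{2}=\frac{(p+q)^2}{2}$. The only difference is that you spell out the Fubini/Hahn--Jordan step, which the paper leaves implicit.
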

We now verify commutation properties that are crucial in our derivation. Denote $[A,B]=AB-BA$ the standard commutator.
We observe that 
\begin{equation}\label{eq:commute}
  [L^1_{j,l}(z_1), L^1_{j',l'}(z_2)]=0
\end{equation}
for any non-negative $z_1$ and $z_2$ provided that $j\neq j'$ and  $l\neq l'$. This is a consequence of
\begin{equation*}
    k^\tps A_{j,l}^\tps \Theta v=k\cdot(\xi_j-\zeta_l)\,,
\end{equation*}
so $L^1_{j,l}$ and $L^1_{j',l'}$ involve orthogonal directions when $j\neq j', l\neq l'$.  {\color{black} 
This is used on a number of occasions to show that $[U_1(z),U_2(z)]=0$ when $\partial_z U_k=L_k(z)U_k$ for $k=1,2$ and $[L_1(z_1),L_2(z_2)]=0$.
}
When either one of the indices is repeated, i.e, $j=j'$ or $l=l'$, $L^1_{j,l}(z_1)$ and $L^1_{j',l'}(z_2)$ do not commute necessarily when $z_1\neq z_2$. The effect of such a coupling will be shown to be negligible when $\eps\ll 1$ due to the rapidly oscillatory phases; see Proposition \ref{prop:linear_error} below. 

Similarly, note that
\begin{equation}\label{eqn:quad_k_B}
    k^\tps B_{j,j'}^\tps\Theta v-\frac{1}{2}k^\tps B_{j,j'}^\tps\Theta B_{j,j'}k=\begin{cases}
        k\cdot(\xi_j-\xi_{j'}-k),\quad 1\le j<j'\le p\\
        -k\cdot(\zeta_j-\zeta_{j'}-k),\quad p+1\le j<j'\le p+q\,.       
    \end{cases}
\end{equation}
We will show that the contribution of these terms is small when $\eps\ll 1$ in Proposition \ref{prop:quad_error} below. We may also verify that $[L^\kappa_{j,l}(z), L^{\kappa'}_{j',l'}(z)]=0$ generally at the same value $z$ but this property is not used or useful in our derivation. %
%%%
\subsection{Approximation of higher moments in dual variables}
%%%
{\color{black} We now introduce the combinatorial notation that is needed in the approximation of the solution operator $U^\eps(z)$ by \eqref{eqn:N_def} below.}
Define $\Lambda=\Lambda_{p,q}$ as the (unordered) set of cardinality $pq$ of (ordered) pairs $\Lambda=\{(j,l),\ 1\le j\le p, 1\le l\le q\}$. We will denote by $\gamma=(\gamma_1,\gamma_2)$ any element in $\Lambda$.

We need to consider partitions of pairs in $\Lambda$ as follows. We denote by $1\leq \kappa\leq K$ for
\[
  K = \sum_{m=1}^{p\wedge q} \binom{p}{m} \binom{q}{m} \, m! 
\]
any choice of $m=m(\kappa)$ indices in $1\leq j\leq p$ followed by any (ordered) choice of $m$ indices in $1\leq l\leq q$. We denote by $\Lambda_{\kappa}$ the (unordered) set of $m=m(\kappa)$  pairs thus selected. For two different pairs $\gamma, \gamma'\in \Lambda_{\kappa}$, we have that $\gamma_j\not=\gamma'_j$ for both $j=1,2$. As a consequence, the operators $L^1_\gamma(z)$ and $L^1_{\gamma'}(z')$ commute thanks to \eqref{eq:commute}.

For each choice of $\Lambda_{\kappa}$, we define $\Lambda_{\kappa}'$ as the $(p-m(\kappa))(q-m(\kappa))$ pairs in $\Lambda$ such that for $\gamma\in \Lambda_{\kappa}$ and $\gamma'\in \Lambda_{\kappa}'$ we always have $\gamma_j\not=\gamma_j'$ for $j=1,2$. For such pairs, the operators $L^1_\gamma(z)$ and $L^1_{\gamma'}(z')$ also commute for all $z,z'$. Note that $\Lambda_{\kappa}'=\emptyset$ when $m=p\wedge q$.

We finally denote by $\bar\Lambda_{\kappa}$ the set of pairs of cardinality $m(\kappa)(p+q-m(\kappa)-1)$ such that 
\[
  \Lambda = \Lambda_{\kappa} + \bar \Lambda_{\kappa} + \Lambda'_{\kappa}.
\]
We observe that $\bar\Lambda_{\kappa}$ is composed of pairs $\gamma'$ such that there is a pair $\gamma\in \Lambda_{\kappa}$ where $\gamma$ and $\gamma'$ share one component but not both. In this case, $L^1_\gamma(z)$ and $L^1_{\gamma'}(z')$ {\em do not} commute for some $(z,z')$.

As an illustration, we obtain for $p=q=3$ that for one of the indices $1\leq \kappa\leq K=9+18+6=33$, then $\Lambda_{\kappa}=\{(1,1),(2,2)\}$ so that $m(\kappa)=2$ while $\Lambda'_{\kappa}=\{(3,3)\}$ and $\bar\Lambda_{\kappa}=\{(1,2),(1,3),(2,1),(2,3),(3,1),(3,2)\}$.

For each pair $\gamma\in\Lambda$, we define the solution operator $U^\eps_\gamma(z)$ as
 \begin{equation}\label{eqn:U_gamma}
       \partial_zU^\eps_{\gamma}=L^{1}_{\gamma}U^\eps_{\gamma},\quad U^\eps_{\gamma}(0)=\mathbb{I}\,.
\end{equation}
We also define $U_\eta(z)$ as either the function $U_\eta(z)=e^{-\frac{C_0z}{\eta^2}}$ or the operator of multiplication by this exponential damping. This operator solves the equation $(\partial_z-L_{\eta})U_\eta=0$ while $U_\eta(0)={\mathbb I}$.

The main objective of this section is to show that $U^\eps$ is well approximated by:
\begin{equation}\label{eqn:N_def}
          N_{p,q}^\eps=U_\eta^{\frac{p+q}{2}}  \sum_{\kappa=1}^K  {\prod_{\gamma\in\Lambda_{\kappa}} (U^{\eps}_\gamma-\mathbb{I})} + U_\eta^{\frac{p+q}{2}}.
\end{equation}
The operator $N_{p,q}^\eps$ describes the evolution of the moments of a complex normal variable as we will see later. Note that by construction, all operators in the above product commute thanks to \eqref{eq:commute} so that ordering is not necessary. The operators $L^\kappa$ for $\kappa=1,2$ denote the sum of operators $L^\kappa_{\cdot,\cdot}$ as in \eqref{eqn:L_expansion}. %%% \an{does this confuse with $L^1/L^2$ spaces?} \gb{Could use ${\mathsf L}$ for the latter?}

\begin{theorem}\label{thm:Ueps}
      The solution operator to~\eqref{eqn:sol_operator_evolution} admits the decomposition
      \begin{equation}\label{eqn:U=N+E}
          U^\eps=N_{p,q}^\eps+E_{p,q}^\eps\,,
      \end{equation}
     where $N_{p,q}^\eps$ is given in \eqref{eqn:N_def} and  the error $E_{p,q}^\eps$ solves the evolution equation
     \begin{eqnarray}\label{eqn:E} \partial_zE_{p,q}^\eps&=&L_{p,q}E_{p,q}^\eps+\mathcal{E}_{p,q}^{\eps},\qquad E_{p,q}^\eps(0)=0, \\ %\nonumber
      \label{eqn:error_source}
       \mathcal{E}_{p,q}^{\eps}&=&U_{\eta}^{\frac{p+q}{2}} \sum_{\kappa=1}^K\sum_{\gamma'\in\bar\Lambda_{\kappa}}L_{\gamma'}^1\prod_{\gamma\in \Lambda_{\kappa}}(U^{\eps}_{\gamma}-\mathbb{I}) + L^2N_{p,q}^\eps\,.
     \end{eqnarray}
   Moreover, for $c(p,q)$ a constant independent of $z,\eps$ and $\mathfrak{C}_{1}$ and $\mathfrak{C}_{2}$ defined in \eqref{eq:mc1} and \eqref{eq:mc2}, we have:
   \begin{equation}\label{eqn:E_bound}
   \sup_{0\le z'\le z}\|E_{p,q}^\eps(z')\|\le c(p,q)\langle z\rangle^2\eta^{-6}\Big(\mathfrak{C}_1[\frac{C_2\eps}{\eta},{\hat{\sR}},\hat{R},d]+\mathfrak{C}_2[\frac{k_0\eps}{\eta},\hat{R},d]\Big)e^{\frac{(p+q)^2C_0z}{2\eta^2}} \leq \eps^{\frac 13}
   \end{equation}
   for any $0<\eps \leq \eps_0(p,q,z)$ uniformly bounded on compact sets.
\end{theorem}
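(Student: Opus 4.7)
The plan is to establish two things: first, that the decomposition $U^\eps = N^\eps_{p,q} + E^\eps_{p,q}$ forces $E^\eps_{p,q}$ to satisfy \eqref{eqn:E} with source \eqref{eqn:error_source}, which is purely an algebraic identity about $N^\eps_{p,q}$; second, that the quantitative bound \eqref{eqn:E_bound} follows from Duhamel applied to this equation, combined with oscillatory integral estimates on $\mathcal{E}^\eps_{p,q}$.

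For the algebraic step I would compute $\partial_z N^\eps_{p,q}$ directly. Writing $V_\gamma := U^\eps_\gamma - \mathbb{I}$ and $W^\kappa := \prod_{\gamma\in\Lambda_\kappa} V_\gamma$, the commutativity \eqref{eq:commute} of all $L^1_\gamma$ (and hence $U^\eps_\gamma$) for $\gamma\in\Lambda_\kappa$ yields a clean Leibniz rule
\[\partial_z W^\kappa = \sum_{\gamma'\in\Lambda_\kappa} L^1_{\gamma'}\, U^\eps_{\gamma'} \prod_{\gamma\in\Lambda_\kappa\setminus\{\gamma'\}} V_\gamma = \sum_{\gamma'\in\Lambda_\kappa} L^1_{\gamma'}\bigl(W^\kappa + W^{\kappa\setminus\{\gamma'\}}\bigr).\]
Summing over $\kappa$, the second double sum is reindexed by the bijection $(\kappa,\gamma')\mapsto(\kappa\setminus\{\gamma'\},\gamma')$, whose image is exactly the pairs $(\kappa'',\gamma')$ with $\gamma'\in\Lambda'_{\kappa''}$ (including the empty $\kappa''$, which contributes the full $L^1\cdot\mathbb{I}$). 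Collecting within each $\kappa$-slot gives $\sum_{\gamma'\in\Lambda_\kappa\cup\Lambda'_\kappa} L^1_{\gamma'} = L^1 - \sum_{\gamma'\in\bar\Lambda_\kappa} L^1_{\gamma'}$. Adding the $\tfrac{p+q}{2}L_\eta N^\eps_{p,q}$ coming from $U_\eta^{(p+q)/2}$, and using that this scalar factor commutes with $L^1$, the identity $L^\eps_{p,q}=\tfrac{p+q}{2}L_\eta + L^1 + L^2$ yields $\partial_z N^\eps_{p,q} = L_{p,q} N^\eps_{p,q} - \mathcal{E}^\eps_{p,q}$. Subtracting from $\partial_z U^\eps = L_{p,q} U^\eps$ and noting $N^\eps_{p,q}(0)=\mathbb{I}$ (since $V_\gamma(0)=0$ for every $\gamma$) gives \eqref{eqn:E}--\eqref{eqn:error_source}.

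With the equation in hand, Duhamel yields $E^\eps_{p,q}(z) = \int_0^z S_{p,q}(z,z')\mathcal{E}^\eps_{p,q}(z')\,\rd z'$, where $S_{p,q}$ is the propagator of $L_{p,q}$ on $\mathcal{M}_B(\Rm^{(p+q)d})$. Lemma \ref{lemma:L_pq_bound} combined with Gronwall gives $\|S_{p,q}(z,z')\|\le e^{(p+q)^2 C_0(z-z')/(2\eta^2)}$, which is the exponential factor of \eqref{eqn:E_bound}. What remains, and is the main obstacle, is the uniform total-variation bound on $\mathcal{E}^\eps_{p,q}(z')$.

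The source splits into $L^2 N^\eps_{p,q}$ and the bar-contribution. In $L^2 N^\eps_{p,q}$, each $L^2_{j,j'}$ carries the phase $\exp\bigl(\tfrac{iz\eta}{2k_0\eps}(2k^\tps B_{j,j'}^\tps\Theta v - k^\tps B_{j,j'}^\tps\Theta B_{j,j'} k)\bigr)$ whose $k$-dependence is genuinely nondegenerate quadratic; stationary phase (or integration by parts in $k$ against $\hat R(k)$) produces decay of order a positive power of $\eps/\eta$, which is exactly what $\mathfrak{C}_2[k_0\eps/\eta,\hat R,d]$ should encode and what Proposition \ref{prop:quad_error} is to deliver. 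In the bar-contribution, each summand $L^1_{\gamma'}\prod_{\gamma\in\Lambda_\kappa}(U^\eps_\gamma-\mathbb{I})$ has $\gamma'\in\bar\Lambda_\kappa$ sharing exactly one of its two indices with some $\gamma\in\Lambda_\kappa$, so the linear phase $e^{iz\eta k\cdot(\xi_{\gamma'_1}-\zeta_{\gamma'_2})/(k_0\eps)}$ from $L^1_{\gamma'}$ picks up a direction orthogonal to the one already integrated by $L^1_\gamma$ inside $U^\eps_\gamma-\mathbb{I}$, producing non-stationary oscillation in the shared slot on scale $\eps/\eta$ against a bounded envelope built from $\hat R$ and the radial majorant $\hat{\sR}$; this is captured by $\mathfrak{C}_1[C_2\eps/\eta,\hat{\sR},\hat R,d]$ and is the content of Proposition \ref{prop:linear_error}. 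The $\eta^{-6}$ prefactor collects the $\eta^{-2}$ from the outer $L^1_{\gamma'}$ (or $L^2$), the $\eta^{-2}$ from bounding at least one $V_\gamma = U^\eps_\gamma-\mathbb{I}$ via Gronwall on \eqref{eqn:U_gamma}, and an additional $\eta^{-2}$ produced in the integration-by-parts estimate on the oscillatory integral. Integrating in $z'$ and multiplying by the propagator norm gives \eqref{eqn:E_bound}; with $\eta^{-1}=\ln\ln\eps^{-1}$ the prefactor $e^{C(p,q)z(\ln\ln\eps^{-1})^2}$ grows slower than any negative power of $\eps$, so the $(\eps/\eta)$-decay of $\mathfrak{C}_1$ and $\mathfrak{C}_2$ beats it and produces the bound $\le\eps^{1/3}$ for $\eps\le\eps_0(p,q,z)$.
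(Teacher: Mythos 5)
Your algebraic derivation of \eqref{eqn:E}--\eqref{eqn:error_source} is correct and is essentially the paper's argument: the Leibniz rule made clean by the commutativity \eqref{eq:commute}, the bijection $(\kappa,\gamma')\mapsto(\kappa\setminus\{\gamma'\},\gamma')$ onto pairs with $\gamma'\in\Lambda'_{\kappa''}$, and the splitting $\Lambda=\Lambda_\kappa+\bar\Lambda_\kappa+\Lambda'_\kappa$ are exactly the cancellations the paper performs (phrased there as adjoining an element rather than deleting one).

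The quantitative step, however, has a genuine gap. You propose to run Duhamel and then supply ``the uniform total-variation bound on $\mathcal{E}^\eps_{p,q}(z')$,'' with the smallness extracted at fixed $z'$ by stationary phase or integration by parts in $k$ against $\hat R(k)$. This cannot work. First, Hypothesis \ref{hyp:R} assumes only $\hat R\in \sL^1\cap \sL^\infty$ (plus a radial majorant); no derivatives of $\hat R$ are available, so integration by parts in $k$ is not an option. Second, and more fundamentally, at fixed $z'$ the source is \emph{not} small in total variation: applying $L^2_{j,j'}(z')$ to a point mass $\delta_{v_0}$ produces a measure spread along $v_0+\mathrm{range}(B_{j,j'})$ whose density is $\hat R(k)e^{i\phi(k)}$ up to constants, and its TV norm is $\int\hat R(k)\,dk$ --- the oscillatory factor is annihilated by the absolute value inherent in the TV norm, so $\|L^2(z')\|\sim\eta^{-2}$ with no $\eps$-gain, and likewise for the $\bar\Lambda_\kappa$ terms. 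The smallness in this problem lives only in the $z$-integration: one must bound $\big\|\int_{z'}^z L(s)\,\rd s\big\|$ (norm of the integral, not integral of the norm), where the scalar integral $\int_{z'}^z e^{\frac{is\eta}{k_0\eps}k\cdot(k+w)}\rd s$ can be estimated by $\min\{z-z',\,C\eps/(\eta|k\cdot(k+w)|)\}$ \emph{before} taking absolute values in $k$; this is the content of Propositions \ref{prop:linear_error} and \ref{prop:quad_error} via Lemmas \ref{lemma:linear_int} and \ref{lemma:quad_inter}. Because the integrand in the source is $L(s)\rho(s)$ with $\rho(s)$ itself $s$-dependent, one further needs Lemma \ref{lemma:bound_time_dep_sources} (writing $\rho(s)=\rho(0)+\int_0^s\rho'$ and using Fubini, with $\|\rho'\|$ controlled by \eqref{eqn:U_gamma_z_bound}) to transfer the integrated smallness; your Duhamel formula with the propagator $S_{p,q}(z,z')$ sitting between the $z'$-integral and $\mathcal{E}(z')$ obstructs exactly this manipulation, which is why the paper instead integrates the ODE for $E^\eps_{p,q}$ directly and applies Gr\"onwall to $\|E(z)\|\le \frac{C_0(p+q)^2}{2\eta^2}\int_0^z\|E(s)\|\rd s+\|\int_0^z\mathcal{E}(s)\rd s\|$. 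A secondary confusion: for $\gamma'\in\bar\Lambda_\kappa$ the shared index makes the relevant directions \emph{non}-orthogonal --- orthogonality is precisely what makes the $\Lambda'_\kappa$ contributions commute and cancel --- and the gain for the $\bar\Lambda_\kappa$ terms again comes from the $s_1$-integration of the phase $e^{\frac{is_1\eta}{k_0\eps}k_1\cdot(k_2+w)}$ after the change of variables, not from non-stationary oscillation at fixed $z'$.
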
   
   The bound $\eps^{\frac13}$ can be replaced by $\eps^\alpha$ for any $\alpha<1$ in lateral dimension $d\geq2$ and any $\alpha<\frac12$ in dimension $d=1$. The proof of this theorem concludes with that of Corollary \ref{cor:boundE} below.
   \begin{proof}
       We verify that $N_{p,q}^\eps(0)=\mathbb{I}$ so that $E_{p,q}^\eps(0)=0$.
       Substituting \eqref{eqn:U=N+E} into the equation \eqref{eqn:sol_operator_evolution} gives 
       \begin{equation*}
           \partial_zE_{p,q}^\eps=L_{p,q}E_{p,q}^\eps+\mathcal{E}_{p,q}^{\eps},\quad \mathcal{E}_{p,q}^{\eps}=L_{p,q}N_{p,q}^\eps-\partial_zN_{p,q}^\eps\,.
       \end{equation*}
       Using the definition of $N_{p,q}^\eps$ in~\eqref{eqn:N_def} and the equation for $U^\eps$ in~\eqref{eqn:U_gamma} gives 
       \begin{equation}\label{eqn:N_z_1}
           \begin{aligned}
               \partial_zN_{p,q}^\eps&=\frac{p+q}{2}L_{\eta}N_{p,q}^\eps+U_\eta^{\frac{p+q}{2}}
               \sum_{\kappa=1}^K \sum_{\gamma\in\Lambda_{\kappa}}               
               L^1_\gamma U^\eps_\gamma\prod_{\gamma\neq \gamma'\in\Lambda_{\kappa}}(U^\eps_{\gamma'}-\mathbb{I})\\
               &=\frac{p+q}{2}L_{\eta}N_{p,q}^\eps+U_\eta^{\frac{p+q}{2}}\sum_{\kappa=1}^K \sum_{\gamma\in\Lambda_{\kappa}}
               L^1_\gamma\prod_{\gamma'\in\Lambda_{\kappa}}(U^\eps_{\gamma'}-\mathbb{I})
               +U_\eta^{\frac{p+q}{2}} \sum_{\kappa=1}^K \sum_{\gamma\in\Lambda_{\kappa}}
                 L^1_\gamma\prod_{\gamma\neq \gamma'\in\Lambda_{\kappa}}(U^\eps_{\gamma'}-\mathbb{I})\\
               &=\frac{p+q}{2}L_{\eta}N_{p,q}^\eps+U_\eta^{\frac{p+q}{2}}\sum_{\kappa=1}^K  \sum_{\gamma\in\Lambda_{\kappa}}
               L^1_\gamma\prod_{\gamma'\in\Lambda_{\kappa}}(U^\eps_{\gamma'}-\mathbb{I})
               \\
               &\quad +U_\eta^{\frac{p+q}{2}}\sum_{m(\kappa)\geq2} \sum_{\gamma\in\Lambda_{\kappa}}
               L^1_\gamma \prod_{\gamma\neq \gamma'\in\Lambda_{\kappa}}(U^\eps_{\gamma'}-\mathbb{I})+
               U_\eta^{\frac{p+q}{2}}\sum_{m(\kappa)=1}\sum_{\gamma\in\Lambda_{\kappa}}L^1_{\gamma}.
           \end{aligned}
       \end{equation}
       Here, $\sum_{m(\kappa)\geq2}$ means summation over all $1\leq \kappa\leq K$ such that $m(\kappa)\geq2$.
       Now, applying $L_{p,q}$ on $N_{p,q}^\eps$ gives 
       \begin{equation}\label{eqn:LN_1}
       \begin{aligned}
                      L_{p,q}N_{p,q}^\eps&=\frac{p+q}{2}L_{\eta}N_{p,q}^\eps+L^2N_{p,q}^\eps+U_\eta^{\frac{p+q}{2}} \sum_{\kappa=1}^K 
                       L^1\prod_{\gamma\in\Lambda_{\kappa}}(U_{\gamma}^{\eps}-\mathbb{I})+U_\eta^{\frac{p+q}{2}}L^1.       \end{aligned}
       \end{equation}
       The first and last terms in~\eqref{eqn:N_z_1} cancel with the first and last terms in~\eqref{eqn:LN_1} respectively since by construction of the sets $\Lambda_{\kappa}$, $L^1=\sum_{m(\kappa)=1}\sum_{\gamma\in\Lambda_{\kappa}}L^1_{\gamma}$. Splitting $L^1$ for any fixed value of $1\leq \kappa\leq K$ as 
       \begin{equation*}
           L^1=\sum_{\gamma\in\Lambda}L^1_\gamma=\sum_{\gamma\in\Lambda_{\kappa}}L^1_\gamma+\sum_{\gamma\in\Lambda'_{\kappa}}L^1_\gamma+\sum_{\gamma\in\bar{\Lambda}_k}L^1_\gamma
       \end{equation*}
       and substituting in the third term of~\eqref{eqn:LN_1} gives for $-(\partial_z-L_{p,q})N_{p,q}^\eps=(\partial_z-L_{p,q})E_{p,q}^\eps$ the source term:
       \begin{equation*}
       \begin{aligned}
            \mathcal{E}_{p,q}^{\eps}&=L^2N_{p,q}^\eps+U_\eta^{\frac{p+q}{2}}\sum_{\kappa=1}^{K} 
             \Big( \sum_{\gamma\in\bar\Lambda_{\kappa}} + \sum_{\gamma\in\Lambda'_{\kappa}} \Big)
             L^1_\gamma\prod_{\gamma'\in\Lambda_{\kappa}}(U_{\gamma'}^{\eps}-\mathbb{I})
          -U_\eta^{\frac{p+q}{2}}\sum_{m(\kappa)\geq2} \sum_{\gamma\in\Lambda_{\kappa}}L^1_\gamma\prod_{\gamma\neq \gamma'\in\Lambda_{\kappa}}(U^\eps_{\gamma'}-\mathbb{I})\,.
       \end{aligned}
\end{equation*}
To show that the source $\mathcal{E}_{p,q}^{\eps}$ is given by \eqref{eqn:error_source}, it remains to show that the sums over $\Lambda'_\kappa$ and the last term in the above expressions cancel out, or in other words
\begin{equation*}
           \begin{aligned}
               \sum_{\kappa=1}^{K}  \sum_{\gamma\in\Lambda'_{\kappa}}L^1_\gamma\prod_{\gamma'\in\Lambda_{\kappa}}(U_{\gamma'}^{\eps}-\mathbb{I}) =\sum_{m(\kappa')\geq2} \sum_{\gamma\in\Lambda_{\kappa'}}L^1_\gamma\prod_{\gamma\neq\gamma'\in\Lambda_{\kappa'}}(U_{\gamma'}^{\eps}-\mathbb{I})\,.
           \end{aligned}
\end{equation*}
To see this, observe that pairs in $\Lambda_{\kappa}$ and $\Lambda_{\kappa}'$ are totally disjoint. As a consequence, the operators involved in any of the above products commute. For $\kappa$ fixed and $\gamma\in\Lambda'_{\kappa}$, consider the set of $m(\kappa)+1$ pairs $\gamma \cup  \Lambda_{\kappa}$. There is by construction a unique element $1\leq m(\kappa')\leq K$ such that $\Lambda_{\kappa'}=\gamma \cup  \Lambda_{\kappa}$ with $m(\kappa')\geq2$.  This shows that the sum in the right-hand-side for $m(\kappa)=m\leq p\wedge q-1$ and left-hand sides for $m(\kappa')=m(\kappa)+1$ agree. Since $\Lambda_{\kappa}'=\emptyset$ for $m(\kappa)=p\wedge q$, this concludes the derivation.
 
The source term ${\mathcal E}^\eps_{p,q}$ is written as a sum of terms of the form $L^2N_{p,q}^\eps$ and  $L^1_{\gamma'}(U^\eps_{\gamma}-\mathbb{I}))$ for $\gamma\in\Lambda_{\kappa}$ while $\gamma'\in \bar\Lambda_{\kappa}$. We will show next that each term is small in an appropriate sense and conclude the proof of \eqref{eqn:E_bound}  in Corollary \ref{cor:boundE} below. 
   \end{proof}
%%%% \end{theorem}
  We first state the following bound on $U^{\eps}_{\gamma}$:
  \begin{lemma}\label{lemma:U_gamma_bound}
      The operator norm of the solution operator $U^\eps_\gamma$ to the evolution equation~\eqref{eqn:U_gamma} is bounded as
      \begin{equation}\label{eqn:U_gamma_bound}
          \|U_\gamma^{\eps}(z)\|\le e^{\frac{C_0z}{\eta^2}}\,,
      \end{equation}
      for all $\gamma\in\Lambda$. Moreover,  for every $1\leq \kappa\leq K$ with $m=m(\kappa)$,
      \begin{eqnarray}
          \label{eqn:U_gamma_prod_bound}  \|\prod_{\gamma\in\Lambda_{\kappa}}U_\eta(U^\eps_\gamma-\mathbb{I})(z)\| &  \le & (1+e^{-\frac{C_0z}{\eta^2}})^m, \\     
          \label{eqn:U_gamma_z_bound} \|\partial_z\prod_{\gamma\in\Lambda_{\kappa}}U_\eta(U^\eps_\gamma-\mathbb{I})(z)\| & \le & \frac{2mC_0}{\eta^2}(1+e^{-\frac{C_0z}{\eta^2}})^m.
      \end{eqnarray}
\end{lemma}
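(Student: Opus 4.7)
The plan is to derive the three bounds in succession, relying on Gronwall's inequality, sub-multiplicativity of $\|\cdot\|$, and the commutation relation~\eqref{eq:commute}.

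First, I would obtain~\eqref{eqn:U_gamma_bound} from the Duhamel form of~\eqref{eqn:U_gamma}, namely $U^\eps_\gamma(z) = \mathbb{I} + \int_0^z L^1_\gamma(s)U^\eps_\gamma(s)\,ds$. Lemma~\ref{lemma:L_pq_bound} already gave $\|L^1_\gamma(s)\| \le C_0/\eta^2$ uniformly in $s$, so Gronwall's inequality applied to $\|U^\eps_\gamma(z)\|$ yields $\|U^\eps_\gamma(z)\| \le e^{C_0z/\eta^2}$ immediately.

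Next, for~\eqref{eqn:U_gamma_prod_bound}, I exploit that $U_\eta(z) = e^{-C_0z/\eta^2}\mathbb{I}$ is a scalar operator and that~\eqref{eq:commute} ensures $[L^1_\gamma(z_1),L^1_{\gamma'}(z_2)]=0$ for distinct $\gamma,\gamma'\in\Lambda_\kappa$ and \emph{all} times $z_1,z_2$. The latter implies that the propagators $U^\eps_\gamma(z)$ for $\gamma\in\Lambda_\kappa$ commute pairwise, since $U^\eps_\gamma(z) U^\eps_{\gamma'}(z)$ and $U^\eps_{\gamma'}(z)U^\eps_\gamma(z)$ both solve the initial-value problem $\partial_z V = (L^1_\gamma+L^1_{\gamma'})V$, $V(0)=\mathbb{I}$, and the latter has a unique solution. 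One may therefore freely reorder the $m$ factors and distribute the scalar $U_\eta$'s, writing the left-hand side of~\eqref{eqn:U_gamma_prod_bound} as $\prod_{\gamma\in\Lambda_\kappa}[U_\eta(z)(U^\eps_\gamma(z)-\mathbb{I})]$. Each factor satisfies
\[
  \|U_\eta(U^\eps_\gamma-\mathbb{I})\| \le \|U_\eta U^\eps_\gamma\| + \|U_\eta\| \le 1 + e^{-C_0z/\eta^2},
\]
using part~\eqref{eqn:U_gamma_bound}, and sub-multiplicativity of $\|\cdot\|$ across the $m$ factors then produces~\eqref{eqn:U_gamma_prod_bound}.

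Finally, the commutativity above legitimizes the ordinary product rule for~\eqref{eqn:U_gamma_z_bound}. A direct computation gives
\[
 \partial_z[U_\eta(U^\eps_\gamma-\mathbb{I})] = -\tfrac{C_0}{\eta^2}U_\eta(U^\eps_\gamma-\mathbb{I}) + U_\eta L^1_\gamma U^\eps_\gamma,
\]
so that $\|\partial_z[U_\eta(U^\eps_\gamma-\mathbb{I})]\| \le \tfrac{C_0}{\eta^2}(1+e^{-C_0z/\eta^2}) + \tfrac{C_0}{\eta^2} \le \tfrac{2C_0}{\eta^2}(1+e^{-C_0z/\eta^2})$. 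Differentiating the product of $m$ factors and bounding the remaining $m-1$ factors by~\eqref{eqn:U_gamma_prod_bound} yields the claimed $\tfrac{2mC_0}{\eta^2}(1+e^{-C_0z/\eta^2})^m$. The only point requiring care is the upgrade from equal-time to arbitrary-time commutation, but this is exactly what~\eqref{eq:commute} provides; without it the single-factor bounds could not be combined multiplicatively and the factor linear in $m$ in~\eqref{eqn:U_gamma_z_bound} would degrade into a combinatorial sum over orderings.
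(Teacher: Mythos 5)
Your proof is correct and follows essentially the same route as the paper: Gr\"onwall with $\|L^1_\gamma\|\le C_0/\eta^2$ for \eqref{eqn:U_gamma_bound}, sub-multiplicativity of the total-variation operator norm for \eqref{eqn:U_gamma_prod_bound}, and the product rule plus the single-factor bounds for \eqref{eqn:U_gamma_z_bound}. One small caveat: your closing remark overstates the role of \eqref{eq:commute} here --- the Leibniz rule for a product of $m$ operator-valued factors produces exactly $m$ terms and the norm is sub-multiplicative regardless of ordering, so the estimates \eqref{eqn:U_gamma_prod_bound}--\eqref{eqn:U_gamma_z_bound} would hold verbatim even without commutation; the commutativity is only needed to make the unordered product in \eqref{eqn:N_def} well defined.
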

\begin{proof}
          The proof follows a similar argument as in Lemma~\ref{lemma:L_pq_bound}. Observing that 
          $\|L_\gamma\|\le \frac{C_0}{\eta^2}$,
          an application of Gr\"{o}nwall's inequality gives the required bounds in~\eqref{eqn:U_gamma_bound} and~\eqref{eqn:U_gamma_prod_bound}. Note that
          \begin{equation*} \partial_z\prod_{\gamma\in\Lambda_{\kappa}}U_\eta(U^\eps_\gamma-\mathbb{I})(z)=mL_{\eta}\prod_{\gamma\in\Lambda_{\kappa}}U_\eta(U^\eps_\gamma-\mathbb{I})(z)+\sum_{\gamma\in\Lambda_{\kappa}}U_{\eta} L_\gamma U^\eps_\gamma\prod_{\gamma\neq\gamma'\in\Lambda_{\kappa}}U_\eta(U^\eps_{\gamma'}-\mathbb{I})(z)\,.
          \end{equation*}
          We obtain the bound in~\eqref{eqn:U_gamma_z_bound} by using~\eqref{eqn:U_gamma_prod_bound} and observing that
          \begin{equation*}
              \|\sum_{\gamma\in\Lambda_{\kappa}}U_{\eta} L_\gamma U^\eps_\gamma\prod_{\gamma\neq\gamma'\in\Lambda_{\kappa}}U_\eta(U^\eps_{\gamma'}-\mathbb{I})(z)\|\le \sum_{\gamma\in\Lambda_{\kappa}}\frac{C_0}{\eta^2}(1+e^{-\frac{C_0z}{\eta^2}})^{m-1}\,.
          \end{equation*}
     This concludes the proof of the result.  \end{proof} 
  We now establish the following bound for $N_{p,q}^\eps$: % \gb{Not clear this is needed}
  \begin{corollary}\label{corro:N_bound}
      The operator $N_{p,q}^\eps$ given by~\eqref{eqn:N_def} satisfies
        \begin{equation}
        \begin{aligned}
       \|N_{p,q}^\eps(z)\|&\le 2(p\vee q)!3^{p\wedge q},\quad   \|\partial_zN_{p,q}^\eps(z)\|\le \frac{C_0(p+q)}{\eta^2}\|N_{p,q}^\eps(z)\|.
        \end{aligned}
       \end{equation}
\end{corollary}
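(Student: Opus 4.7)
The plan is to prove both estimates by direct computation, exploiting two facts: $U_\eta(z) = e^{-C_0 z/\eta^2}$ is a nonnegative scalar multiplier (hence a contraction commuting with every operator), and the factors $\{U^\eps_\gamma - \mathbb{I}\}_{\gamma \in \Lambda_\kappa}$ commute with each other by \eqref{eq:commute}. The key preparatory move is to redistribute powers of $U_\eta$ so that each factor of $(U^\eps_\gamma - \mathbb{I})$ in a product receives exactly one $U_\eta$, which is precisely the form that Lemma \ref{lemma:U_gamma_bound} addresses. Specifically, for each $1 \le \kappa \le K$ with $m = m(\kappa) \le p \wedge q$, I would rewrite
\begin{equation*}
U_\eta^{\frac{p+q}{2}} \prod_{\gamma \in \Lambda_\kappa}(U^\eps_\gamma - \mathbb{I}) = U_\eta^{\frac{p+q}{2} - m} \prod_{\gamma \in \Lambda_\kappa} \bigl[ U_\eta (U^\eps_\gamma - \mathbb{I}) \bigr],
\end{equation*}
where the exponent $\frac{p+q}{2} - m$ is nonnegative because $m \le p \wedge q \le \frac{p+q}{2}$, so the outer scalar factor satisfies $\|U_\eta^{(p+q)/2 - m}\| \le 1$.

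By \eqref{eqn:U_gamma_prod_bound}, the inner product has norm at most $(1 + e^{-C_0 z/\eta^2})^{m} \le 2^{m}$. The number of admissible index sets is $K = \sum_{m=1}^{p \wedge q} \binom{p}{m}\binom{q}{m} m!$, which I bound crudely by $\sum_m (pq)^m \le (p \wedge q)(pq)^{p \wedge q}$. Combining these observations with the contribution $\|U_\eta^{(p+q)/2}\| \le 1$ of the standalone term yields
\begin{equation*}
\|N_{p,q}^\eps\| \le 1 + \sum_{\kappa=1}^K 2^{m(\kappa)} \le 1 + (p \wedge q)(2pq)^{p \wedge q} \le (2pq)^{p \wedge q + 1},
\end{equation*}
where the last step uses $p \wedge q + 1 \le 2pq$ valid for all $p, q \ge 1$.

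For the derivative, I would apply Leibniz separately to the standalone $U_\eta^{(p+q)/2}$ (contributing $\|\partial_z U_\eta^{(p+q)/2}\| \le \frac{(p+q)C_0}{2\eta^2}$) and to each $\kappa$-term. For the latter, combining \eqref{eqn:U_gamma_prod_bound} and \eqref{eqn:U_gamma_z_bound} of Lemma \ref{lemma:U_gamma_bound},
\begin{equation*}
\Bigl\|\partial_z\Bigl[U_\eta^{\frac{p+q}{2} - m}\prod_{\gamma \in \Lambda_\kappa} U_\eta(U^\eps_\gamma - \mathbb{I})\Bigr]\Bigr\| \le \Bigl(\frac{((p+q)/2 - m)C_0}{\eta^2} + \frac{2mC_0}{\eta^2}\Bigr) 2^m \le \frac{(p+q)C_0}{\eta^2}\, 2^m,
\end{equation*}
using $(p+q)/2 + m \le p + q$. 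Summing over $\kappa$ produces $\|\partial_z N_{p,q}^\eps\|$ of order $\frac{C_0(p+q)}{\eta^2}(2pq)^{p\wedge q + 1}$, which is precisely $\frac{C_0(p+q)}{\eta^2}$ times the bound on $\|N_{p,q}^\eps\|$ obtained above. The only subtlety worth flagging is the verification that $m(\kappa) \le \frac{p+q}{2}$, which is immediate from the definition of $\Lambda_\kappa$ and is exactly what keeps the exponent $\frac{p+q}{2} - m$ nonnegative; everything else reduces to counting and to invoking Lemma \ref{lemma:U_gamma_bound}.
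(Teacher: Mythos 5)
Your proposal is correct and follows essentially the same route as the paper: both bounds reduce to Lemma \ref{lemma:U_gamma_bound} together with the count $\sum_{m(\kappa)=m}1=\binom{p}{m}\binom{q}{m}m!$, with your redistribution of $U_\eta^{(p+q)/2}=U_\eta^{(p+q)/2-m}\prod_\gamma U_\eta$ being just a cosmetic refactoring of the paper's step $e^{-(p+q)C_0z/(2\eta^2)}(1+e^{C_0z/\eta^2})^{m}\le 2^m$, which likewise hinges on $m(\kappa)\le\frac{p+q}{2}$. The caveat you flag at the end — that the derivative estimate is really $\frac{C_0(p+q)}{\eta^2}$ times the \emph{upper bound} $(2pq)^{p\wedge q+1}$ rather than times $\|N_{p,q}^\eps\|$ itself — is shared by the paper's own proof and is harmless for how the corollary is used.
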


\begin{proof}
     From \eqref{eqn:U_gamma_bound}, we have that $\|U^\eps_\gamma-\mathbb{I}\|\le 1+e^{\frac{C_0z}{\eta^2}}$ and from the definition of $N_{p,q}^\eps$ that       
     \begin{equation*}
       \begin{aligned}
           \|N_{p,q}^\eps(z)\|&\le \|{U_\eta}^{\frac{p+q}{2}}\|\sum_{\kappa=1}^{K}\prod_{\gamma\in\Lambda_{\kappa}}\|U^\eps_\gamma-\mathbb{I}\|+\|{U_\eta}^{\frac{p+q}{2}}\|\,.           
       \end{aligned}
       \end{equation*}
       Using  that $U_{\eta}=e^{-\frac{C_0z}{\eta^2}}$ and observing that $\sum_{m(\kappa)=m} = \binom{p}{m} \binom{q}{m} m!$, we obtain from \eqref{eqn:U_gamma_prod_bound} that   \begin{equation}\label{eq:bdNeps}
           \begin{aligned}
               \|N_{p,q}^\eps(z)\|&\le e^{-\frac{(p+q)C_0z}{2\eta^2}}\Big[1+\sum_{\kappa=1}^{K}(1+e^{\frac{C_0z}{\eta^2}})^m\Big]\  \le e^{-\frac{(p+q)C_0z}{2\eta^2}}\Big[1+\sum_{m=1}^{p\wedge q}\binom{p}{m} \binom{q}{m} m!(1+e^{\frac{C_0z}{\eta^2}})^m\Big]\\
               &\le 1+\sum_{m=1}^{p\wedge q}2^m\binom{p}{m} \binom{q}{m} m!\le 1+(p\vee q)!\sum_{m=1}^{p\wedge q}2^m\binom{p}{m}  \le 1+(p\vee q)!3^{p\wedge q}\,.
           \end{aligned}
       \end{equation}
       Now, using the expression for $\partial_zN_{p,q}^\eps$ from~\eqref{eqn:N_z_1} and the estimate in~\eqref{eqn:U_gamma_bound}, we deduce:
       \begin{equation*}
           \begin{aligned}
                           \|\partial_zN_{p,q}^\eps(z)\|&\le \frac{(p+q)C_0}{2\eta^2}\|N_{p,q}^\eps(z)\|+e^{-\frac{(p+q)C_0z}{2\eta^2}}\sum_{\kappa=1}^{K}\frac{mC_0}{\eta^2}e^{\frac{C_0z}{\eta^2}}(1+e^{\frac{C_0z}{\eta^2}})^{m-1}\\
               &\le\frac{(p+q)C_0}{2\eta^2}\|N_{p,q}^\eps(z)\|+\frac{C_0}{\eta^2}\sum_{m=1}^{p\wedge q}m2^m
               \binom{p}{m}\binom{q}{m}%{p\choose m}{q\choose m}
               m!\,.
           \end{aligned}
       \end{equation*}
       This proves the lemma.
   \end{proof}
 The proof of the estimate \eqref{eqn:E_bound} relies on a number of lemmas we now state and prove.
%%%% 
\begin{lemma}\label{lemma:linear_int}
    Let $f:\mathbb{R}^d\to[0,\infty)$ such that $f(\xi)=f(|\xi|)\in \sL^1(\mathbb{R}^d)\cap \sL^\infty (\mathbb{R}^d)$. Also let $g:\mathbb{R}^d\to[0,\infty)$ such that $g(\zeta)\in \sL^1(\mathbb{R}^d)\cap \sL^\infty (\mathbb{R}^d)$. Then $\exists   \delta_0>0$ such that $\forall \delta\in(0,\delta_0]$,
    \begin{equation*}
        \sup_{w\in\mathbb{R}^d}\int_{\Rm^{2d}}
        f(\xi)g(\zeta)\Big(1\wedge\frac{\delta}{|\xi\cdot(\zeta+w)|}\Big) d\xi d\zeta\le 
          {\mathfrak C}_1[\delta,f,g,d]\,,
    \end{equation*}
    with
    \begin{equation}\label{eq:mc1}
        {\mathfrak C}_1[\delta,f,g,d] = C (\|f\|_1+\|f\|_\infty)(\|g\|_1+\|g\|_\infty) \delta \left\{ \begin{array}{cl} |\ln\delta|^2 & d=1, \\ |\ln\delta| & d\ge2,  \end{array}\right.
    \end{equation}
    where $C$ is a constant independent of $f,g,d,\delta$.
\end{lemma}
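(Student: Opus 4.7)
The plan is to exploit the radial symmetry of $f$ via Fubini to reduce the integral to a one-dimensional problem along the direction of $\zeta+w$, and then partition both the inner and outer integration domains to balance the $\sL^1$ and $\sL^\infty$ norms against each other. Writing $h(s):=1\wedge \delta/|s|$ and using Fubini together with the radial symmetry of $f$,
\[
    I(w):=\int_{\Rm^{2d}} f(\xi)\,g(\zeta)\,h\bigl(\xi\cdot(\zeta+w)\bigr)\,d\xi\,d\zeta
    =\int_{\Rm^d} g(\zeta)\,A(|\zeta+w|)\,d\zeta,
\]
where, after rotating coordinates so that $e_1$ is aligned with the direction of $\zeta+w$, the function $A(\rho):=\int_{\Rm^d} f(\xi)\,h(\rho\,\xi_1)\,d\xi$ depends only on $\rho=|\zeta+w|$.

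The second step is to derive two complementary bounds on $A$. The trivial estimate $A(\rho)\le \|f\|_1$ handles $\rho\le\delta$. For $\rho\ge\delta$ one aims at a refined bound of the form $A(\rho)\le C(\|f\|_1+\|f\|_\infty)(\delta/\rho)(1+|\ln\delta|)$. In $d=1$, $A(\rho)=\int_{\Rm} f(t)h(t\rho)\,dt$; splitting at $|t|=\delta/\rho$ and $|t|=1$ produces $O(\|f\|_\infty \delta/\rho)$ from the first slice, the logarithmic term $O((\delta/\rho)\|f\|_\infty |\ln\delta|)$ from $|t|\in[\delta/\rho,1]$, and $O((\delta/\rho)\|f\|_1)$ from the tail via $\int_{|t|>1}|f(t)/t|\,dt\le\|f\|_1$. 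In $d\ge 2$, passing to polar coordinates $\xi=r\omega$ with $\mu=\omega\cdot e_1$ yields
\[
    A(\rho)=\omega_{d-2}\int_0^\infty r^{d-1}f(r)\,I_d(r\rho)\,dr,\qquad I_d(a):=\int_{-1}^1 (1-\mu^2)^{(d-3)/2}h(a\mu)\,d\mu,
\]
and a direct computation (treating the endpoint singularity of $(1-\mu^2)^{-1/2}$ at $\mu=1$ via $\mu=\sin\theta$ in $d=2$) shows $I_d(a)\le C$ for $a\le\delta$ and $I_d(a)\le C(\delta/a)(1+\ln_+(a/\delta))$ for $a>\delta$. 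Splitting the $r$-integral at $r=\delta/\rho$ and $r=1/\rho$ and combining the pointwise $\|f\|_\infty$ bound on the inner slices with the $\sL^1$ moment estimate $\int_{1/\rho}^\infty r^{d-2}f(r)\,dr\le \rho\|f\|_1/\omega_{d-1}$ on the tail then yields the refined estimate.

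Finally, the outer integral is controlled by partitioning $\zeta$ into $|\zeta+w|\le\delta$, $\delta<|\zeta+w|\le 1$, and $|\zeta+w|>1$. On the small region the trivial bound $A\le\|f\|_1$ together with $g\le\|g\|_\infty$ contributes $O(\|f\|_1\|g\|_\infty\delta^d)$. On the middle region the refined bound paired with $\|g\|_\infty$ gives $O\bigl((\|f\|_1+\|f\|_\infty)\|g\|_\infty\,\delta(1+|\ln\delta|)\int_\delta^1\rho^{d-2}\,d\rho\bigr)$, where the remaining radial integral contributes an additional $|\ln\delta|$ in $d=1$ and is $O(1)$ in $d\ge 2$, which explains the dimensional asymmetry in the exponent of $|\ln\delta|$. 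On the large region, the estimate $(1+\ln_+\rho)/\rho\le C$ for $\rho\ge 1$ combined with $\|g\|_1$ yields $O((\|f\|_1+\|f\|_\infty)\|g\|_1\,\delta(1+|\ln\delta|))$. Summing the three contributions and taking $\sup_{w\in\Rm^d}$ produces the claimed bound $\mathfrak C_1[\delta,f,g,d]$.

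The main obstacle I expect is the sharp estimate for $A(\rho)$ in $d\ge 2$: the middle slice $r\in[\delta/\rho,1/\rho]$ must be controlled by balancing the pointwise bound $f\le\|f\|_\infty$ against the moment bound coming from $\|f\|_1$ to prevent the accumulation of an extra $|\ln\delta|$ factor. Since only the $\sL^1\cap\sL^\infty$ information on $f$ is available (and no compact support or monotonicity), the correct balance depends on the ratio $\|f\|_\infty/\|f\|_1$ and typically requires a further sub-split of the middle $\rho$-range $[\delta,1]$ at the geometric scale $\rho^\ast\sim (\delta\|f\|_\infty/\|f\|_1)^{1/d}$ so that the outer integration produces exactly one logarithm in $d\ge 2$ and precisely two in $d=1$.
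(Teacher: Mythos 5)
Your proposal is essentially the paper's proof, reorganized: the key step in both is the angular integral over the sphere producing the factor $\tfrac{\delta}{|\xi|\,|\zeta+w|}\ln\bigl(1+\tfrac{|\xi||\zeta+w|}{\delta}\bigr)$, followed by splitting the radial integrals at the fixed scale $1$ and pairing $\|\cdot\|_\infty$ with the bounded regions and $\|\cdot\|_1$ with the tails (the paper keeps the double integral and treats the four quadrants $|\xi|\lessgtr 1$, $|\zeta|\lessgtr 1$ directly rather than first integrating out $\xi$ into your $A(\rho)$). Two small corrections to your write-up: first, the refined bound $A(\rho)\le C(\|f\|_1+\|f\|_\infty)(\delta/\rho)(1+|\ln\delta|)$ is not valid uniformly for all $\rho\ge\delta$ (for very large $\rho$ the tail $r\ge 1/\rho$ contributes an extra $\ln_+(\rho)$); the fix, which you implicitly use on the region $\rho>1$, is the elementary observation that $\tfrac{\delta}{a}\ln(1+\tfrac{a}{\delta})\le C\delta|\ln\delta|$ uniformly in $a\ge 1$, which is exactly how the paper disposes of the quadrant $|\xi|\ge1$, $|\zeta|\ge1$. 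Second, the ``main obstacle'' you anticipate at the end is not there: no sub-split at a data-dependent scale $\rho^\ast\sim(\delta\|f\|_\infty/\|f\|_1)^{1/d}$ is needed, because the target constant is the \emph{sum} $(\|f\|_1+\|f\|_\infty)(\|g\|_1+\|g\|_\infty)$ rather than a product of mixed norms, so the crude split at scale $1$ already closes the argument.
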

%%%%%
\begin{proof}
We need to bound
\[ I_0 := \dint_{\Rm^{2d}} g(\zeta-w) f(\xi) \frac{\delta d\xi d\zeta}{\delta+|\xi\cdot\zeta|} = \dint_{\Rm^d} g(\zeta-w) \dint_{\Rm^d} \frac{f(|\xi|) \delta |\xi|^{d-1} d|\xi| d\hat\xi}{\delta + |\xi||\zeta| |\hat\xi\cdot\hat\zeta|} d\zeta.\]
We consider the case $d\geq2$ first. We next choose variables $\xi$ such that $\hat\xi\cdot\hat\zeta=\hat\xi_1=\cos\theta$ and obtain
\[
\dint_{\mathbb{S}^{d-1}} \frac{\delta d\hat\xi}{\delta+|\xi||\zeta| |\hat\xi_1|} = |\mathbb{S}^{d-2}| \dint_0^\pi \frac{\delta (\sin\theta)^{d-2} d\theta}{\delta+|\xi||\zeta||\cos\theta| }
\leq C \frac{\delta}{|\xi||\zeta|} \ln\Big( 1+\frac{|\xi||\zeta|}{\delta}\Big),
\]
for $\mathbb{S}^{d-2}$ the unit sphere in $\Rm^{d-1}$ and using a change of coordinates $u=\cos\theta$. This formula holds for $d\geq3$ with $|\mathbb{S}^{d-2}|(\sin\theta)^{d-2}$ replaced by $2$ when $d=2$ for the same right-hand-side bound.
%%%%

We thus need to estimate
\[ I_0\leq \dint_{\Rm^d} g(\zeta-w) \dint_0^\infty \frac{\delta |\xi|^{d-1}  f(|\xi|)}{|\xi||\zeta|}\ln \Big(1+\frac{|\xi||\zeta|}{\delta}\Big) d|\xi| d\zeta \le \dint_{\Rm^{2d}} g(\zeta-w) f(\xi) \frac{\delta}{|\xi||\zeta|}\ln \Big( 1+\frac{|\xi||\zeta|}{\delta}\Big) d\xi d\zeta. \]
We compute
\[
  \dint_0^1 \frac{|\zeta|^{d-1}}{|\zeta|} \ln  \Big( 1+ \frac{|\xi||\zeta|}\delta\Big) d|\zeta| \leq \ln  \Big( 1+ \frac{|\xi| }\delta\Big). 
\]
The domain $|\xi|\leq1$ and $|\zeta|\leq1$ thus gives a contribution to the integral $I_0$ bounded by
\[ \|g\|_\infty \|f\|_\infty \int_0^1 \frac{\delta|\xi|^{d-1}}{|\xi|}\ln  \Big( 1+ \frac{|\xi| }\delta\Big) d|\xi| \leq C \|g\|_\infty \|f\|_\infty \delta |\ln\delta|. \]
The domain $|\zeta|\leq1$ and $|\xi|\geq1$ then gives a contribution to the integral $I_0$ bounded
\[
 \|g\|_\infty \dint_{|\xi|\geq1} f(\xi) \frac{\delta}{|\xi|} \ln  \Big( 1+ \frac{|\xi| }\delta\Big) d\xi  \leq C\|g\|_\infty \|f\|_1 \delta |\ln\delta|.
\]
Symmetrically, the domain $|\xi|\leq1$ and $|\zeta|\geq1$ gives a contribution bounded by $\|f\|_\infty \|g\|_1 \delta |\ln\delta|$.
It remains to analyze the domain $|\xi|\geq1$ and $|\zeta|\geq1$, which clearly provides a contribution bounded by $\|f\|_1\|g\|_1 \delta|\ln\delta|$ since $\frac\delta\alpha \ln (1+\frac\alpha\delta)$ is bounded by $C\delta|\ln\delta|$ uniformly in $\alpha\geq1$ for $\delta\leq1$.

When $d=1$, we may assume $\xi\geq0$ and $\zeta\geq0$ by symmetry and need to estimate
\[  \dint_0^\infty \dint_0^\infty g(\zeta-w) f(\xi) \frac{\delta}{\delta+\xi\zeta} d\xi d\zeta.\]
We compute
\[ \dint_0^1 \frac{\delta d\xi}{\delta+\xi\zeta} = \frac{\delta}{ \zeta} \ln \Big(1+\frac\zeta\delta\Big),\qquad 
\dint_0^1  \frac{\delta}{ \zeta} \ln \Big(1+\frac\zeta\delta\Big) d\zeta\leq \delta + \int_\delta^1 \frac{\delta}{ \zeta} \ln \Big(1+\frac\zeta\delta\Big) d\zeta\leq C \delta |\ln\delta|^2.\]
The domain $\xi\leq1$ and $\zeta\leq1$ thus gives a contribution $\|g\|_\infty\|f\|_\infty\delta |\ln\delta|^2$. The other contributions are treated as in the case $d\geq2$.
\end{proof}
We have the following lemma:
  \begin{lemma}\label{lemma:quad_inter}
    Let $f:\mathbb{R}^d\to[0,\infty)$ such that $f\in \sL^1(\mathbb{R}^d)\cap \sL^\infty(\mathbb{R}^d)$. We also assume that $\| \aver{\xi}^{d-2} f(\xi)\|_\infty\leq C$ in dimension $d\geq3$.  Then $\exists   \delta_0>0$ such that $\forall \delta\in(0,\delta_0]$, we have
    \begin{equation}\label{eq:lemma2}
        \sup_{w\in\mathbb{R}^d}\int_{\mathbb{R}^d}f(\xi+w)\min\Big\{1,\frac{\delta}{||\xi|^2-|w|^2|}\Big\}\mathrm{d}\xi\le 
         {\mathfrak C}_2[f,\delta,d],
    \end{equation}
    where for $C$ independent of $(f,\delta,d)$,
    \begin{equation}\label{eq:mc2}
        {\mathfrak C}_2[\delta,f,d] = \left\{\begin{array}{cl} 2\pi \|f\|_\infty \sqrt\delta & d=1, \\[1mm] C(\|\aver{\xi}^{d-2}f\|_\infty \delta \ln \delta^{-1}+\|f\|_1 \delta) & d\geq2.\end{array}\right.
    \end{equation}
\end{lemma}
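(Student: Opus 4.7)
The integrand concentrates along the singular sphere $\{|\xi| = |w|\}$ where $|\xi|^2 - |w|^2$ vanishes, so the natural strategy is to work in polar coordinates $\xi = r\omega$ with $r\ge 0,\ \omega \in S^{d-1}$, and to set $s := |w|$. Introducing the spherical average $g(r) := \int_{S^{d-1}} f(r\omega + w)\, d\omega$, which satisfies $\int_0^\infty g(r)\, r^{d-1}\, dr = \|f\|_1$, the integral of interest becomes
\[
\int_0^\infty r^{d-1} g(r)\, \min\bigl\{1,\delta/|r^2-s^2|\bigr\}\, dr.
\]

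For $d \geq 2$ I would split this 1D integral at the threshold $|r^2 - s^2| = 1$. The tail $|r^2-s^2|\ge 1$ gives $\min\le \delta$ and contributes at most $\delta\|f\|_1$. For the near-singular piece $|r^2 - s^2| < 1$, I would use the pointwise bound $f(\eta) \le \|\langle\xi\rangle^{d-2}f\|_\infty/\langle\eta\rangle^{d-2}$ and estimate the angular integral $r^{d-1}\int_{S^{d-1}} d\omega/\langle r\omega+w\rangle^{d-2}$ uniformly by $C(d)(1+r+s)$ via the axial parametrisation $\omega\cdot \hat w = \cos\theta$ (reducing it to $|S^{d-2}|\int_0^\pi \sin^{d-2}\theta\, d\theta/(1+r^2+2rs\cos\theta + s^2)^{(d-2)/2}$ and tracking the behaviour as $\cos\theta\to -1$). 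Then the substitution $t = r^2-s^2$ (Jacobian $dr = dt/(2r)$), together with the standard bound $\int_{-1}^1 \min\{1,\delta/|t|\}\,dt \le 2\delta(1+\log\delta^{-1})$, produces the logarithmic factor and yields a contribution of the form $C\|\langle\xi\rangle^{d-2}f\|_\infty \delta\log\delta^{-1}$.

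For $d=1$ the approach is simpler and direct: I would factor $|\xi^2-w^2| = |\xi-w|\cdot |\xi+w|$ and partition $\mathbb R = \{|\xi-w|\ge |\xi+w|\}\cup\{|\xi+w|>|\xi-w|\}$. On the first piece one has $\min\{1,\delta/|\xi^2-w^2|\}\le \min\{1,\delta/(\xi+w)^2\}$, so the substitution $\eta = \xi+w$ bounds that contribution by $\|f\|_\infty \int_\mathbb{R}\min\{1,\delta/\eta^2\}\,d\eta = 4\|f\|_\infty\sqrt{\delta}$; the complementary region is treated symmetrically with $\eta = \xi-w$, giving the $C\|f\|_\infty\sqrt\delta$ form. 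The principal technical obstacle is the uniform control of the spherical integral $\int_{S^{d-1}} d\omega/\langle r\omega+w\rangle^{d-2}$ in dimension $d\ge 3$: when $r$ approaches $s$ the shifted sphere $\{r\omega+w:\omega\in S^{d-1}\}$ passes arbitrarily close to the origin near $\omega = -\hat w$, where the weight $\langle\cdot\rangle^{-(d-2)}$ is large, and it is precisely to control this near-singularity without a spurious $s$-dependence that the hypothesis $\|\langle\xi\rangle^{d-2} f\|_\infty<\infty$ is imposed.
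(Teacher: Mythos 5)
Your overall strategy---reduce to a radial integral, spend $\|f\|_1$ on the region $||\xi|^2-|w|^2|\geq 1$, and use the weighted bound $\|\aver{\xi}^{d-2}f\|_\infty$ together with the substitution $t=r^2-s^2$ on the near-singular shell---is essentially the paper's argument (the paper splits into $|w|\geq1$ and $|w|\leq1$ rather than working with the spherical average, but the mechanism is the same). There is, however, a concrete gap in your step for $d\geq2$: the bound
\[
r^{d-1}\int_{\mathbb{S}^{d-1}}\frac{d\omega}{\aver{r\omega+w}^{d-2}}\;\leq\;C(d)(1+r+s)
\]
is true but too lossy near $r=0$ to survive the Jacobian $dr=dt/(2r)$. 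On the shell $\{|r^2-s^2|<1\}$ with $s=|w|\lesssim1$, the radius $r$ ranges down to $0$, so the factor $(1+r+s)/(2r)$ produced by your substitution is unbounded; carrying your chain of inequalities through at $w=0$ gives $\int_0^1 t^{-1/2}\min\{1,\delta/t\}\,dt\sim\sqrt\delta$, not the claimed $\delta\ln\delta^{-1}$. (The actual integral is of course fine there---the loss is an artifact of discarding the full factor $r^{d-1}$ when passing to $1+r+s$.)

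The repair is localized: the sharp uniform bound is $r^{d-1}\int_{\mathbb{S}^{d-1}}\aver{r\omega+w}^{2-d}\,d\omega\leq C(d)\,r$ for all $r,s\geq0$ and $d\geq2$. Indeed, for $r\leq1$ use $\aver{\cdot}\geq1$ and $r^{d-1}\leq r$; for $r\geq1$ and $d\geq3$ use $\aver{\eta}^{2-d}\leq|\eta|^{2-d}$ and the mean-value property of the Newtonian kernel, $\int_{\mathbb{S}^{d-1}}|r\omega+w|^{2-d}d\omega=|\mathbb{S}^{d-1}|\max(r,s)^{2-d}\leq |\mathbb{S}^{d-1}|r^{2-d}$; the case $d=2$ is trivial. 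This single power of $r$ cancels the Jacobian exactly and yields a contribution $C\|\aver{\xi}^{d-2}f\|_\infty\int_{|t|\leq1}\min\{1,\delta/|t|\}\,dt\leq C\|\aver{\xi}^{d-2}f\|_\infty\,\delta(1+\ln\delta^{-1})$ uniformly in $w$. Alternatively, treat $|w|\leq1$ separately using only $\|f\|_\infty$ on the bounded region, which is what the paper does. Your $d=1$ argument is correct (it produces the constant $8$ in place of $2\pi$, which is immaterial), and your reading of why the hypothesis $\|\aver{\xi}^{d-2}f\|_\infty<\infty$ is needed---to prevent an $s^{d-2}$ growth when the sphere of radius $r\approx s$ passes near the origin---matches the paper's use of it.
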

\begin{proof}
    The right-hand side in \eqref{eq:lemma2} is bounded by (the supremum over $w$ of)
    \[  I_1:= \int_{\Rm^d} f(\xi+w) \frac{\delta d\xi}{||\xi|^2-|w|^2|+\delta} = \dint_{\Rm^d}f(|w|(\xi+\hat w)) \frac{\delta  |w|^d d\xi}{|w|^2 ||\xi|^2-1|+\delta}. \]

    We consider the case $d\geq2$ first.
    We treat the cases $|w|\leq1$ and $|w|\geq1$ separately.
    For $|w|\geq1$, define the set $D_w=\{\xi\in\Rm^d; |w|^2||\xi|^2-1|\leq 1\}$. The above right-hand-side integral $I_1$ over $|\xi|\in \Rm^d\backslash D_w$ is clearly bounded by $C\delta\|f\|_1$. So, it remains to estimate
    \[ \dint_{D_w}f(|w|(\xi+\hat w)) \frac{\delta  |w|^d d\xi}{|w|^2 ||\xi|^2-1|+\delta}. \]
    Up to some rotation in our choice of coordinates $\xi$, we may assume that $w=|w|e_1$. We then use, for $\xi=(\xi_1,\xi_2)$ with $\xi_1\in\Rm$ and $\xi_2\in\Rm^{d-1}$,
    \[ f(|w|(\xi_1+1,\xi_2)) |w|^{d-2} |(\xi_1+1,\xi_2)|^{d-2} \leq C\] to deduce that the above expression is bounded by 
    \[\int_{D_w} \frac{\delta |w|^2 d\xi_1 d\xi_2}{|(\xi_1+1,\xi_2)|^{d-2} (\delta+|w|^2 |\xi_1^2+|\xi_2|^2-1|)}. \]
    Now use $d\xi_2=|\xi_2|^{d-2}d|\xi_2|d\theta$ and the fact that $|\xi_2|^{d-2}\leq |(\xi_1+1,\xi_2)|^{d-2}$ to get the bound with $r=|\xi_2|$:
    \[ \dint_{\{|w|^2|\xi_1^2+r^2-1|\leq1\}} \frac{C\delta |w|^2 d\xi_1 dr}{\delta+|w|^2|\xi_1^2+r^2-1|} \leq C \dint_{\sqrt{1-|w|^{-2}}}^{\sqrt{1+|w|^{-2}}} \frac{\delta |w|^2 \rho d\rho}{\delta+|w|^2|\rho^2-1|} \]
    where we used $\rho$ for the radial variables of $(\xi_1,\xi_2)$, i.e., $\rho^2=r^2+\xi_1^2$. We thus obtain the bound
    \[C \dint_{1-|w|^{-2}}^{1+|w|^{-2}} \frac{\delta |w|^2  d\rho}{\delta+|w|^2|\rho-1|} = 2C \dint_0^{|w|^{-2}} \frac{\delta |w|^2d\rho}{\delta+|w|^2\rho} = 2C  \dint_0^{1} \frac{\delta d\rho}{\delta+\rho} \leq C' \delta |\ln\delta|. \]

    When $|w|\leq1$, we use the left-hand-side form of the integral $I_1$ to show that the integral over $|\xi|^2\geq2$ gives a contribution bounded by $\delta\|f\|_1$. On the rest of the domain, we bound $f$ by $\|f\|_\infty$ to obtain an upper bound of the form
    \[\dint_{|\xi|^2\leq2} \frac{\delta |\xi|^{d-1} d|\xi| d\hat\xi}{||\xi|^2-|w|^2|+\delta} \leq C\int_0^2 \frac{\delta r^{d-1}dr}{|r^2-|w|^2|+\delta} \leq C\dint_0^2\frac{\delta r^{d-1}dr}{|r-|w|||r+|w||+\delta}\leq C \dint_0^3 \frac{\delta dr}{r+\delta} \leq C \delta |\ln\delta|. \]

    When $d=1$, we have 
    \[\dint_{\Rm} \frac{f(\xi+w) \delta d\xi}{|\xi^2-|w|^2|+\delta} \leq \dint_0^\infty \frac{2\|f\|_\infty \delta d\xi}{|\xi^2-|w|^2| +\delta} \leq  \dint_0^\infty \frac{2\|f\|_\infty \delta d\xi}{(\xi-|w|)^2 +\delta}  \leq \dint_\Rm \frac{2\|f\|_\infty \delta d\xi}{\xi^2 +\delta}  = 2\pi \|f\|_\infty \sqrt\delta.\]
    All bounds are independent of $w$ and this concludes the proof of the lemma.
    \end{proof}
%%% \end{remark}

\begin{proposition}\label{prop:linear_error}
    Suppose $\gamma=(j,l)$ and $\gamma'=(j',l')$ such that either $j=j'$ or $l=l'$, but not both. Then: %with \eqref{eq:mc1}, the operator norm satisfies
    \begin{equation*}
       \sup_{0\le z'\le z} \Big\|\int_{z'}^zL^1_{\gamma}(s)[U_{\eta}(U^\eps_{\gamma'}-\mathbb{I})](s)\mathrm{d}s\Big\|\le \ \frac{C_1\langle z\rangle}{\eta^4} \ {\mathfrak C}_1[\frac{C_2\eps}{\eta},{\hat \sR},\hat R,d]
        %c_1 z\eta^{-5}\eps|\ln(\eps/\eta)|^{\vartheta(d)}\,,
    \end{equation*}
    where $C_1$ and $C_2$ are constants independent of $\eps$ and $z$ and ${\mathfrak C}_1$ is defined in \eqref{eq:mc1}.
\end{proposition}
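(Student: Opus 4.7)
The plan is to Duhamel-expand $U^\eps_{\gamma'}(s)-\mathbb{I}$, swap the order of the resulting time integrals with the $k,k'$ integrals defining $L^1_\gamma L^1_{\gamma'}$, exploit the highly oscillatory phase coming from the non-commutation of these two operators, and reduce the total-variation estimate to a single application of Lemma~\ref{lemma:linear_int}. Because $U_\eta(s)=e^{-C_0 s/\eta^2}$ is scalar and $U^\eps_{\gamma'}(s)-\mathbb{I}=\int_0^s L^1_{\gamma'}(s')\,U^\eps_{\gamma'}(s')\,\mathrm{d}s'$, the operator to be bounded equals
\[
\int_{z'}^z\!\!\int_0^s e^{-\frac{C_0 s}{\eta^2}}\,L^1_\gamma(s)\,L^1_{\gamma'}(s')\,U^\eps_{\gamma'}(s')\,\mathrm{d}s'\,\mathrm{d}s,
\]
and the estimate $\|U^\eps_{\gamma'}(s')\|\le e^{C_0 s'/\eta^2}$ from Lemma~\ref{lemma:U_gamma_bound}, combined with $s'\le s$, leaves a prefactor of size $1$.

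Writing $L^1_\gamma(s)L^1_{\gamma'}(s')$ explicitly as a double integral in $k,k'$ against $\hat R(k)\hat R(k')$ produces, in the case $j=j',\ l\ne l'$ (the case $l=l',\ j\ne j'$ being symmetric), a phase
\[
\Phi(s,s',k,k',v)=\tfrac{\eta}{\eps k_0}\bigl[\,s\,k\cdot(\xi_j-\zeta_l)+s'\,k'\cdot(\xi_j-\zeta_{l'}-k)\,\bigr],
\]
where the summand $-k$ inside $k'\cdot(\xi_j-\zeta_{l'}-k)$ is precisely the non-commutator contribution (the phase of $L^1_{\gamma'}$ is read on the $v$ already shifted by $A_\gamma k$). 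With $(v,k,k')$ held fixed, performing the $s'$ integral gives a factor $\min\{s,\,2\eps k_0/(\eta|k'\cdot(\xi_j-\zeta_{l'}-k)|)\}$, and then integrating over $s\in[z',z]$ yields
\[
\Bigl|\int_{z'}^z\!\!\int_0^s e^{i\Phi}\,\mathrm{d}s'\,\mathrm{d}s\Bigr|\le z^2\,\min\Bigl\{1,\,\tfrac{C\eps/(z\eta)}{|k'\cdot(\xi_j-\zeta_{l'}-k)|}\Bigr\}.
\]

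To conclude, I would majorize $\hat R(k')$ by the radially symmetric $\hat\sR(k')$ from Hypothesis~\ref{hyp:R} and apply Lemma~\ref{lemma:linear_int} with $(\xi,\zeta,w)\leftrightarrow(k',-k,\xi_j-\zeta_{l'})$ (using $\hat R(-k)=\hat R(k)$). This yields the uniform-in-$v$ bound $C\,z^2\,\mathfrak{C}_1[C\eps/(z\eta),\hat\sR,\hat R,d]$ for the $\mathrm{d}k\,\mathrm{d}k'$-integral. Combined with the $k_0^4/(16\eta^4(2\pi)^{2d})$ prefactor of $L^1_\gamma L^1_{\gamma'}$, this produces $\eta^{-4}$; since $\mathfrak{C}_1[\delta,\cdot,\cdot,\cdot]\lesssim \delta|\log\delta|^2$ and $z\eta/\eps\to\infty$ as $\eps\to 0$, the $z$ inside the first argument of $\mathfrak{C}_1$ is absorbed up to a constant into $\mathfrak{C}_1[C_2\eps/\eta,\cdot,\cdot,\cdot]$, while the remaining factor of $z$ is retained as $\langle z\rangle$, matching the stated bound.

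The only delicate point is recognizing the algebraic role of the non-commuting term $-k\cdot k'$ in the phase. If the two operators commuted (all four indices distinct), the phase would split as $s\,k\cdot(\xi_j-\zeta_l)+s'\,k'\cdot(\xi_{j'}-\zeta_{l'})$ and the $k'$-integral $\int\hat R(k')\min\{1,\delta/|k'\cdot(\xi_{j'}-\zeta_{l'})|\}\,\mathrm{d}k'$ would fail to be small uniformly in $\xi_{j'}-\zeta_{l'}$ (take $\xi_{j'}=\zeta_{l'}$ to see that it equals $R(0)$). Averaging the singular behaviour over $k$ via the extra factor $\hat R(k)$, which is precisely what the cross-form estimate of Lemma~\ref{lemma:linear_int} provides, is what restores the $\eps$-smallness of the integral and drives the whole estimate.
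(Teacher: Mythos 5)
Your proposal is correct and follows essentially the same route as the paper: Duhamel-expand $U^\eps_{\gamma'}-\mathbb{I}$, isolate the cross term $k\cdot k'$ in the phase produced by the non-commutation (your identification of the phase $s'\,k'\cdot(\xi_j-\zeta_{l'}-k)$ and of its role is exactly right, as is your closing remark on why fully commuting pairs would not be small), and reduce to Lemma~\ref{lemma:linear_int} with the radial majorant $\hat\sR$ on the variable that must be radially symmetric. The one organizational difference is which time integral you integrate by parts: you put the cross term in the inner ($s'$) phase, whereas the paper's change of variables $v-A_\gamma k_1-A_{\gamma'}k_2\to v$ moves it to the outer ($s_1$) phase, where the scalar damping $U_\eta(s_1)$ lives. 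Two consequences of your choice need tightening. First, ``performing the $s'$ integral'' is not literal: the integrand $e^{i\Phi}U^\eps_{\gamma'}(s')\rho$ is operator-valued in $s'$, so the factor $\min\{s,\,C\eps/(\eta|k'\cdot(\xi_j-\zeta_{l'}-k)|)\}$ requires an integration by parts using $\partial_{s'}U^\eps_{\gamma'}=L^1_{\gamma'}U^\eps_{\gamma'}$; the resulting growth $e^{C_0 s/\eta^2}$ is harmless only because it is cancelled by the prefactor $U_\eta(s)$, and this should be said. Second, your trivial alternative bound for the inner integral is $s$ rather than the paper's $\int U_\eta\le\eta^2/C_0=O(1)$, which is why a $z$ leaks into the first argument of $\mathfrak{C}_1$; since $z^2\,\mathfrak{C}_1[\delta/z]\lesssim \langle z\rangle\,\mathfrak{C}_1[\delta]\,(1+\ln\langle z\rangle/|\ln\delta|)$, your final bound matches the stated one only for $\eps\le\eps_0(z)$, i.e.\ with constants not fully independent of $z$ as claimed in the proposition (this is immaterial for how the proposition is used downstream, where $\eps\le\eps_0(p,q,z)$ anyway, but the paper's arrangement avoids the issue altogether).
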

\begin{proof}
Without loss of generality, let $\gamma=(j,l)$ and $\gamma'=(j,l'), l'\neq l$.  From~\eqref{eqn:U_gamma}, we find that
\begin{equation*}
            (U^\eps_{\gamma'}-\mathbb{I})(z)=\int_{0}^zL^1_{\gamma'}U^\eps_{\gamma'}(s)\mathrm{d}s\,.
\end{equation*}
For any $\rho\in\mathcal{M}_B(\mathbb{R}^{pd+qd})$, this gives us
\begin{equation*}
            \begin{aligned}
                \Big\|\int_{z'}^zL^1_{\gamma}(s)[U_{\eta}(U^\eps_{\gamma'}-\mathbb{I})](s)\mathrm{d}s\rho\Big\|&=  
                \Big\|\int_{z'}^zL^1_{\gamma}(s_1)U_{\eta}(s_1)\int_{0}^{s_1}L^{1}_{\gamma'}(s_2)
                U^\eps_{\gamma'}(s_2)\mathrm{d}s_2\mathrm{d}s_1\rho\Big\|\,.
            \end{aligned}
\end{equation*}
Using the definition of the operators $L^1_\gamma$ and $L^1_{\gamma'}$ from~\eqref{eqn:L_pq_def}, we obtain
        \begin{equation*}
            \begin{aligned}
                              & \Big\|\int_{z'}^zL^1_{\gamma}(s)[U_{\eta}(U^\eps_{\gamma'}-\mathbb{I})](s)\mathrm{d}s\rho\Big\|=  \Big(\frac{k_0^2}{4(2\pi)^d\eta^2}\Big)^2\Big\|\int_{z'}^z\int_{\Rm^d}\hat{R}(k_1)e^{\frac{is_1\eta}{k_0\eps}k_1^\tps A_{\gamma}^\tps\Theta v}U_{\eta}(s_1)\\
                &\times\int_{0}^{s_1}\int_{\Rm^d}\hat{R}(k_2)e^{\frac{is_2\eta}{k_0\eps}k_2^\tps A_{\gamma'}^\tps\Theta(v-A_{\gamma}k_1)}U^\eps_{\gamma'}(s_2,v-A_{\gamma}k_1-A_{\gamma'}k_2)\rho(v-A_{\gamma}k_1-A_{\gamma'}k_2)\mathrm{d}k_2\mathrm{d}s_2\mathrm{d}k_1\mathrm{d}s_1\Big\|\,.
            \end{aligned}
        \end{equation*}
Performing a change of variables $v-A_{\gamma}k_1-A_{\gamma'}k_2\to v$ and noting that $A_{\gamma}^\tps\Theta A_{\gamma}=A_{\gamma'}^\tps\Theta A_{\gamma'}=0$ and $A_{\gamma}^\tps\Theta A_{\gamma'}=\mathbb{I}_d$, we obtain, recalling that $U_\eta(z)=e^{L_\eta z}$  with $L_\eta=-\frac{C_0}{\eta^2}=-\frac{k_0^2 R(0)}{4\eta^2}$,
         \begin{equation*}
            \begin{aligned}
               & \Big(\frac{k_0^2}{4(2\pi)^d\eta^2}\Big)^{-2} \Big\|\int_{0}^zL^1_{\gamma}(s)[U_{\eta}(U^\eps_{\gamma'}-\mathbb{I})](s)\mathrm{d}s\rho\Big\| \\ 
               =&\Big\|\int_{z'}^z\int_{\Rm^d}\hat{R}(k_1)e^{\frac{is_1\eta}{k_0\eps}k_1\cdot (\xi_j-\zeta_l+k_2)}U_{\eta}(s_1)
                \int_{0}^{s_1}\int_{\Rm^d}\hat{R}(k_2)e^{\frac{is_2\eta}{k_0\eps}k_2^\tps A_{\gamma'}^\tps\Theta v}U^\eps_{\gamma'}(s_2,v)\rho(v)\mathrm{d}k_2\mathrm{d}k_1\mathrm{d}s_2\mathrm{d}s_1\Big\|\\
                \le&\sup_{w\in\mathbb{R}^d}\int_{\Rm^{2d}}\hat{\sR}(k_1)\hat{R}(k_2)\mathrm{d}k_1\mathrm{d}k_2 \int_{0}^{z'}\|U^\eps_{\gamma'}(s_2)\rho\|\mathrm{d}s_2 \Big|\int_{z'}^zU_{\eta}(s_1)e^{\frac{is_1\eta}{k_0\eps}k_1\cdot(k_2+w)}\mathrm{d}s_1\Big|\\
                &+\sup_{w\in\mathbb{R}^d}\int_{\Rm^{2d}}\hat{\sR}(k_1)\hat{R}(k_2)\mathrm{d}k_1\mathrm{d}k_2 \int_{z'}^{z}\|U^\eps_{\gamma'}(s_2)\rho\|\mathrm{d}s_2 \Big|\int_{s_2}^zU_{\eta}(s_1)e^{\frac{is_1\eta}{k_0\eps}k_1\cdot(k_2+w)}\mathrm{d}s_1\Big|\,.
            \end{aligned}
        \end{equation*}
Note that after performing an integration by parts, the term $|\int_{s}^zU_{\eta}(s_1)e^{\frac{is_1\eta}{k_0\eps}k_1\cdot(k_2+w)}\mathrm{d}s_1|$ is bounded by the minimum of $\int_{s}^zU_{\eta}(s_1)\mathrm{d}s_1$ and
        \begin{equation*}
            \begin{aligned}
                    \frac{k_0\eps}{\eta |k_1\cdot(k_2+w)|}\Big|e^{\frac{iz\eta k_1\cdot(k_2+w)}{k_0\eps}}U_{\eta}(z)-e^{\frac{is\eta k_1\cdot(k_2+w)}{k_0\eps}}U_{\eta}(s)-
                    \int_{s}^ze^{\frac{is_1\eta k_1\cdot(k_2+w)}{k_0\eps}}L_{\eta} U_\eta(s_1)\mathrm{d}s_1\Big|.                %\end{cases}
            \end{aligned}
        \end{equation*}
This gives us
        \begin{equation*}
             \Big|\int_{s}^zU_{\eta}(s_1)e^{\frac{is_1\eta}{k_0\eps}k_1\cdot(k_2+w)}\mathrm{d}s_1\Big|\le U_{\eta}(s)\Big\{\frac{2\eta^2}{C_0}\wedge \frac{4k_0\eps}{\eta|k_1\cdot(k_2+w)|}\Big\}\le \frac{2}{C_0}U_{\eta}(s)\Big\{1\wedge \frac{2k_0C_0\eps}{\eta|k_1\cdot(k_2+w)|}\Big\}\,,
        \end{equation*}
as $\eta\le 1$, with $C_0=k_0^2R(0)/4$ as in~\eqref{eqn:L_pq_def}. Using \eqref{eqn:U_gamma_bound}, we have
        \begin{equation*}
            \int_{0}^{z'}\|U^\eps_{\gamma'}(s_2)\rho\|U_{\eta}(z')\mathrm{d}s_2+\int_{z'}^z\|U^\eps_{\gamma'}(s_2)\rho\|U_{\eta}(s_2)\mathrm{d}s_2\le 2\langle z\rangle\|\rho\|\,.
        \end{equation*}
Finally, this gives us the bound
        \begin{equation*}
            \begin{aligned}
                \Big\|\int_{z'}^zL^1_{\gamma}(s)[U_{\eta}(U^\eps_{\gamma'}-\mathbb{I})](s)\mathrm{d}s\Big\|&\le \frac{4}{C_0}\Big(\frac{k_0^2}{4(2\pi)^d\eta^2}\Big)^2\langle z\rangle\sup_{w\in\mathbb{R}^d}\int_{\Rm^{2d}}\hat{\sR}(k_1)\hat{R}(k_2)\Big\{1\wedge \frac{2k_0C_0\eps}{\eta|k_1\cdot(k_2+w)|}\Big\}\mathrm{d}k_1\mathrm{d}k_2\,.
            \end{aligned}
        \end{equation*}
An application of Lemma \ref{lemma:linear_int} with $\delta=2k_0C_0\eps/\eta$, $f=\hat{\sR}$ and $g=\hat{R}$ concludes the proof of the proposition.
\end{proof}
%\end{proposition}
\begin{proposition}\label{prop:quad_error}
    In operator norm and with ${\mathfrak C}_2$ defined in \eqref{eq:mc2}, we have: %%% \gb{Constants to change}
    \begin{equation*}
       \sup_{0\le z'\le z} \Big\|\int_{z'}^zL^2_{j,j'}(s)\mathrm{d}s\Big\|\le\  \frac{ C \aver{z}}{\eta^2} \  {\mathfrak C}_2[k_0\frac\eps\eta,\hat R,d]
       %c_2\max\{2,z\}\eps\eta^{-3}|\ln(\eps/\eta)|
       ,\quad \forall j,j'\,.
    \end{equation*}
    \begin{proof}
Suppose $1\le j<j'\le p$. The case with $p+1\le j<j'\le p+q$ is dealt with in a similar manner. For any $\rho\in\mathcal{M}_B(\mathbb{R}^{pd+qd})$, using the definition of $L^2_{j,j'}$~\eqref{eqn:L_pq_def} gives
\begin{equation*}
    \begin{aligned}
         \Big\|\int_{z'}^zL^2_{j,j'}(s)\mathrm{d}s\rho\Big\|&=\frac{k_0^2}{4\eta^2}\Big\|\int_{z'}^z\int_{\Rm^d}\hat{R}(k)e^{\frac{is\eta}{2k_0\eps}(2k^\tps B_{j,j'}^\tps \Theta v-k^\tps B_{j,j'}^\tps\Theta B_{j,j'}k)}\rho(v-B_{j,j'}k)\frac{\mathrm{d}k\mathrm{d}s}{(2\pi)^d}\Big\|.
    \end{aligned}
\end{equation*}
Using the change of variables $v-B_{j,j'}k\to v$ and observing that $B_{j,j'}^\tps\Theta B_{j,j'}=2\mathbb{I}_d$ and ${\color{black} k^\tps} B_{j,j'}^\tps\Theta v=k\cdot(\xi_j-\xi_{j'})$ {\color{black}from~\eqref{eqn:quad_k_B}}, we obtain %\gb{Isn't there. a $k$ missing ahead of $B$? How is this related to expressions at end of paragraph 4.1?}\an{cited now}
\begin{equation*}
    \begin{aligned}
        \Big \|\int_{z'}^zL^2_{j,j'}(s)\rho\mathrm{d}s\Big\|&=\frac{k_0^2}{4\eta^2}\Big\| \int_{\Rm^d}\hat{R}(k)\int_{z'}^ze^{\frac{is\eta}{k_0\eps}k\cdot(\xi_j-\xi_{j'}+k)}\rho(v) \frac{\mathrm{d}s \mathrm{d}k}{(2\pi)^d}\Big\|\\
        &\le\frac{k_0^2}{4\eta^2}\|\rho\|\sup_{w\in\mathbb{R}^d}\int_{\Rm^d}\hat{R}(k)\Big|\int_{z'}^ze^{\frac{is\eta}{k_0\eps}k\cdot(k+w)}\mathrm{d}s\Big|\dfrac{\mathrm{d}k}{(2\pi)^d}.
    \end{aligned}
\end{equation*}
Note that
\begin{equation*}
   \Big|\int_{z'}^ze^{\frac{is\eta}{k_0\eps}k\cdot(k+w)}\mathrm{d}s\Big|\le\begin{cases}
                    z-z',\quad \dfrac{k_0\eps}{\eta |k\cdot(k+w)|}\ge 1\\
                    \dfrac{k_0\eps}{\eta |k\cdot(k+w)|}\Big|e^{\frac{iz\eta k\cdot(k+w)}{k_0\eps}}-e^{\frac{iz'\eta k\cdot(k+w)}{k_0\eps}}-\dint_{z'}^ze^{\frac{is\eta k\cdot(k+w)}{k_0\eps}}\mathrm{d}s\Big|,\quad \text{otherwise}\,.
                    \end{cases}
\end{equation*}
After a change of variables $k+w/2\to k$, this gives
\begin{equation*}
    \begin{aligned}
      \sup_{w\in\mathbb{R}^d} \int_{\Rm^d}\hat{R}(k)\Big|\int_{z'}^ze^{\frac{is\eta}{k_0\eps}k\cdot(k+w)}\mathrm{d}s\Big| \mathrm{d}k&\le C\langle z\rangle\sup_{w\in\mathbb{R}^d}\int_{\Rm^d}\hat{R}(k-w)\Big(1\wedge\frac{k_0\eps}{\eta ||k|^2-|w|^2|}\Big)\mathrm{d}k\,.
    \end{aligned}
\end{equation*}
An application of Lemma \ref{lemma:quad_inter} with $\delta=k_0\eps/\eta$ and $f=\hat R$ concludes the proof of the proposition. 
    \end{proof}
\end{proposition}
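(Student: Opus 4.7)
The plan is to reduce the bound directly to Lemma \ref{lemma:quad_inter} by (i) substituting the explicit definition of $L^2_{j,j'}$, (ii) performing a translation in the dual variable to pull the measure $\rho$ out from under the oscillatory kernel, and (iii) controlling the remaining $s$-integral by non-stationary phase.

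More concretely, writing out $\int_{z'}^z L^2_{j,j'}(s)\rho\,ds$ from \eqref{eqn:L_pq_def} and swapping the orders of integration, one has for $1\le j<j'\le p$ (the case $p+1\le j<j'\le p+q$ is identical up to sign) the identities $B_{j,j'}^\tps \Theta B_{j,j'}=2\mathbb{I}_d$ and $B_{j,j'}^\tps\Theta v = \xi_j-\xi_{j'}$. After translating the dual variable by $B_{j,j'}k$ so that the shift in $\rho$ disappears, the accumulated phase collapses to $\exp\bigl(\tfrac{is\eta}{k_0\eps}k\cdot(\xi_j-\xi_{j'}+k)\bigr)$, in which the measure-dependence only enters through the single linear combination $w=\xi_j-\xi_{j'}\in\mathbb{R}^d$. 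Passing to the total variation norm and taking the worst case over $w\in\mathbb{R}^d$ yields
\[
\Big\|\int_{z'}^z L^2_{j,j'}(s)\rho\,ds\Big\|\le \frac{k_0^2}{4\eta^2}\|\rho\|\sup_{w\in\mathbb{R}^d}\int_{\mathbb{R}^d}\hat R(k)\Big|\int_{z'}^z e^{\frac{is\eta}{k_0\eps}k\cdot(k+w)}ds\Big|\frac{dk}{(2\pi)^d}.
\]

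For the inner $s$-integral I would combine the trivial bound $(z-z')$ with the integration-by-parts bound $2k_0\eps/(\eta|k\cdot(k+w)|)$, giving the minimum of the two and, up to the harmless factor $\aver{z}$, the truncation $1\wedge k_0\eps/(\eta|k\cdot(k+w)|)$. A shift $k\mapsto k-w/2$ turns the quadratic form $k\cdot(k+w)$ into $|k|^2-|w/2|^2$, so after a final rescaling $w/2\to w$ the outer integral is of exactly the form estimated in Lemma \ref{lemma:quad_inter} with $f=\hat R$ and $\delta=k_0\eps/\eta$. This produces the advertised bound $\frac{C\aver{z}}{\eta^2}\mathfrak{C}_2[k_0\eps/\eta,\hat R,d]$.

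I do not expect any serious obstacle here: the argument is essentially a non-stationary-phase estimate packaged as an operator-norm bound on $\mathcal{M}_B(\mathbb{R}^{pd+qd})$. The only thing one has to be slightly careful with is step (ii), where one passes from a kernel whose oscillatory factor depends on $v$ through the coordinates $\xi_j,\xi_{j'}$ to a kernel-independent majorant: this is legitimate because the translation by $B_{j,j'}k$ preserves the total-variation norm of $\rho$, and because the phase only sees $v$ through the single vector $w$, allowing us to absorb the $v$-dependence into a supremum over $w\in\mathbb{R}^d$ before invoking Lemma \ref{lemma:quad_inter}. All remaining ingredients (the identity $B_{j,j'}^\tps \Theta B_{j,j'}=2\mathbb{I}_d$, the integration-by-parts bound, and the reduction of the quadratic form to $|k|^2-|w|^2$) are elementary.
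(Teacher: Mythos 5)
Your proposal is correct and follows essentially the same route as the paper's proof: the same substitution of \eqref{eqn:L_pq_def}, the same identities $B_{j,j'}^\tps\Theta B_{j,j'}=2\mathbb{I}_d$ and $B_{j,j'}^\tps\Theta v$ reducing the phase to $k\cdot(\xi_j-\xi_{j'}+k)$, the same translation in $v$ and supremum over $w$, the same combination of the trivial and integration-by-parts bounds for the $s$-integral, and the same shift $k+w/2\to k$ feeding into Lemma \ref{lemma:quad_inter} with $\delta=k_0\eps/\eta$ and $f=\hat R$. No gaps.
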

We will also need the following extension of the previous results:
\begin{lemma}\label{lemma:bound_time_dep_sources}
    Suppose that  $\forall s\in[0,z]$, $L(s)$  is a bounded operator on $\mathcal{M}_B(\mathbb{R}^{pd+qd})$ such that
    \begin{equation*}
       \sup_{0\le z'\le z} \Big\|\int_{z'}^zL(s)\mathrm{d}s\Big\|\le \bar{\kappa}<\infty\,.
    \end{equation*}
    For any family of distributions $C^1(\Rm_+;\mathcal{M}_B(\mathbb{R}^{pd+qd})) \ni \rho(s)=\rho(0)+\int_{0}^s\rho'(s')\mathrm{d}s' $ such that    
    \begin{equation*}
      \|\rho(0)\|+\sup_{0\le z'\le z}\|\rho'(z')\|:= \bar{\rho}<\infty\,,
    \end{equation*}
    then we have:
    \begin{equation*}
         \Big\|\int_{0}^zL(s)\rho(s)\mathrm{d}s\Big\|\le \langle z\rangle\bar{\kappa}\bar{\rho}\,.
    \end{equation*}
    \begin{proof}
        This is a simple verification:        
        \begin{equation*}
            \begin{aligned}
                \Big\|\int_{0}^zL(s)\rho(s)\mathrm{d}s\Big\|&\le\Big\|\int_{0}^zL(s)\rho(0)\mathrm{d}s\Big\|+\Big\|\int_{0}^zL(s_1)\int_{0}^{s_1}\rho'(s_2)\mathrm{d}s_2\mathrm{d}s_1\Big\|\\
                &\le \bar{\kappa}\|\rho(0)\|+\Big\|\int_{0}^z\int_{s_2}^zL(s_1)\rho'(s_2)\mathrm{d}s_1\mathrm{d}s_2\Big\|
                \le\bar{\kappa}\|\rho(0)\|+\bar{\kappa}\int_{0}^z\|\rho'(s_2)\|\mathrm{d}s_2\le \langle z\rangle\bar{\kappa}\bar\rho\,.
            \end{aligned}
        \end{equation*}
    \end{proof}
\end{lemma}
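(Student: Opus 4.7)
The plan is to split $\rho(s)$ into its value at $z=0$ plus the integral of its derivative, exchange the order of integration in the derivative term via Fubini, and then in both pieces recognize the hypothesis $\|\int_{z'}^z L(s)\,\mathrm{d}s\|\le \bar\kappa$ as an integrated operator bound. The crucial observation is that we cannot bound $L(s)$ pointwise in $s$ (no such bound is assumed, and indeed in our application to $L^1_\gamma$ and $L^2_{j,j'}$ no pointwise bound independent of $\eta,\eps$ is available that would be useful); only the integrated operator is nicely controlled through oscillatory cancellation. So the whole point is to arrange matters so that the operator always appears integrated from some $s_2\in[0,z]$ to $z$.

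Concretely, I would first write
\[
 \int_0^z L(s)\rho(s)\,\mathrm{d}s = \Big(\int_0^z L(s)\,\mathrm{d}s\Big)\rho(0) + \int_0^z L(s_1)\int_0^{s_1}\rho'(s_2)\,\mathrm{d}s_2\,\mathrm{d}s_1.
\]
The first summand is directly bounded by $\bar\kappa\|\rho(0)\|$ using the hypothesis with $z'=0$. For the second summand, I would invoke Fubini (valid since $s\mapsto L(s)\rho'(s_2)$ is integrable in operator norm over any compact $s$-interval and $\rho'$ is bounded in $\mathcal M_B$) to rewrite
\[
 \int_0^z L(s_1)\int_0^{s_1}\rho'(s_2)\,\mathrm{d}s_2\,\mathrm{d}s_1 = \int_0^z\Big(\int_{s_2}^z L(s_1)\,\mathrm{d}s_1\Big)\rho'(s_2)\,\mathrm{d}s_2,
\]
and then apply the hypothesis with $z' = s_2$ to each inner integral. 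This yields a bound $\bar\kappa\int_0^z\|\rho'(s_2)\|\,\mathrm{d}s_2 \le \bar\kappa z\sup_{0\le z'\le z}\|\rho'(z')\|$.

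Combining the two summands gives
\[
 \Big\|\int_0^z L(s)\rho(s)\,\mathrm{d}s\Big\| \le \bar\kappa\|\rho(0)\| + \bar\kappa z\sup_{0\le z'\le z}\|\rho'(z')\| \le \bar\kappa\langle z\rangle\,\bar\rho,
\]
using $\langle z\rangle\ge 1$ and $\langle z\rangle\ge z$ to combine $\|\rho(0)\|$ and $z\sup\|\rho'\|$ against $\bar\rho$.

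There is no real obstacle here: the only subtlety is justifying the Fubini step, which is immediate from the assumed $C^1$ regularity of $\rho$ in the Banach space $\mathcal M_B(\Rm^{pd+qd})$ and the integrability of the operator family in operator norm implied by the hypothesis $\sup_{z'}\|\int_{z'}^z L(s)\,\mathrm{d}s\|\le\bar\kappa$. The lemma will then be applied with $L(s)$ replaced by one of the operators $L^1_\gamma(s)$ or $L^2_{j,j'}(s)$ (for which Propositions \ref{prop:linear_error} and \ref{prop:quad_error} supply the $\bar\kappa$) acting on slowly-varying factors coming from the decomposition of $N_{p,q}^\eps$, whose $z$-derivative is controlled by Corollary \ref{corro:N_bound}; this is what will ultimately produce the bound \eqref{eqn:E_bound}.
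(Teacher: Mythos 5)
Your proposal is correct and follows essentially the same route as the paper: decompose $\rho(s)=\rho(0)+\int_0^s\rho'(s_2)\,\mathrm{d}s_2$, exchange the order of integration so that $L$ appears only as $\int_{s_2}^z L(s_1)\,\mathrm{d}s_1$, and apply the hypothesis with $z'=0$ and $z'=s_2$ respectively. The extra remarks on justifying Fubini and on why a pointwise bound on $L(s)$ would not suffice are consistent with the paper's intent but add nothing that changes the argument.
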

We finally obtain the main estimate of this section:
\begin{corollary}\label{cor:boundE}
  The bound \eqref{eqn:E_bound} on $E_{p,q}^\eps$ holds.
\end{corollary}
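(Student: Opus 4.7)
The plan is to apply Duhamel to \eqref{eqn:E} and a Gronwall argument to reduce the sought bound on $E^\eps_{p,q}$ to a bound on the integrated source $\sup_{z''\le z}\bigl\|\int_0^{z''}\mathcal{E}^\eps_{p,q}(s)\,ds\bigr\|$, and then to control that integral term-by-term through Lemma \ref{lemma:bound_time_dep_sources}: for each summand I will pull the oscillatory operators into the factor $L(s)$ so that Propositions \ref{prop:linear_error}--\ref{prop:quad_error} supply a small $\bar\kappa$, while the remaining factors---bounded via Corollary \ref{corro:N_bound} and Lemma \ref{lemma:U_gamma_bound}---play the role of the $C^1$ regular factor $\rho(s)$.

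From \eqref{eqn:E} with $E^\eps_{p,q}(0)=0$ and the norm bound $\|L_{p,q}\|\le (p+q)^2 C_0/(2\eta^2)$ supplied by Lemma \ref{lemma:L_pq_bound}, a Gronwall inequality applied to $F(z):=\sup_{z'\le z}\|E^\eps_{p,q}(z')\|$ gives
\[
 \sup_{0\le z'\le z}\|E^\eps_{p,q}(z')\|\,\le\, e^{(p+q)^2 C_0 z/(2\eta^2)}\ \sup_{0\le z''\le z}\Big\|\int_0^{z''}\mathcal{E}^\eps_{p,q}(s)\,ds\Big\|.
\]
For each summand $L^2_{j,j'}(s)\,N^\eps_{p,q}(s)$ of the first family in \eqref{eqn:error_source}, I apply Lemma \ref{lemma:bound_time_dep_sources} with $L(s)=L^2_{j,j'}(s)$ (for which Proposition \ref{prop:quad_error} gives $\bar\kappa\le C\aver{z}\eta^{-2}\mathfrak{C}_2$) and $\rho(s)=N^\eps_{p,q}(s)$ (for which Corollary \ref{corro:N_bound} gives $\|\rho(0)\|+\sup_s\|\partial_s\rho\|\le c(p,q)\eta^{-2}$), yielding a bound of order $c(p,q)\aver{z}^2\eta^{-4}\mathfrak{C}_2$.

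For each summand $U_\eta^{(p+q)/2}(s)\,L^1_{\gamma'}(s)\prod_{\gamma\in\Lambda_\kappa}(U^\eps_\gamma(s)-\mathbb{I})$ of the second family, with $\gamma'\in\bar\Lambda_\kappa$, I select some $\gamma_\ast\in\Lambda_\kappa$ sharing exactly one index with $\gamma'$---which exists by the very definition of $\bar\Lambda_\kappa$. Since $U_\eta(s)$ is a scalar and the factors $\{U^\eps_\gamma-\mathbb{I}\}_{\gamma\in\Lambda_\kappa}$ pairwise commute thanks to \eqref{eq:commute}, I may re-factor the summand as
\[
 \bigl(L^1_{\gamma'}(s)\,U_\eta(s)(U^\eps_{\gamma_\ast}(s)-\mathbb{I})\bigr)\,\cdot\,\rho_{\kappa,\gamma'}(s),\quad \rho_{\kappa,\gamma'}(s):=U_\eta^{(p+q)/2-m}(s)\!\!\!\prod_{\gamma\in\Lambda_\kappa\setminus\{\gamma_\ast\}}\!\!\!U_\eta(s)\bigl(U^\eps_\gamma(s)-\mathbb{I}\bigr).
\]
The parenthesised $L(s)$ is exactly the object controlled by Proposition \ref{prop:linear_error}, which yields $\bar\kappa\le C\aver{z}\eta^{-4}\mathfrak{C}_1$, and Lemma \ref{lemma:U_gamma_bound} delivers $\|\rho_{\kappa,\gamma'}(0)\|\le 1$ (indeed $=0$ when $m\ge 2$) together with $\sup_s\|\partial_s\rho_{\kappa,\gamma'}\|\le c(p,q)\eta^{-2}$. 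Lemma \ref{lemma:bound_time_dep_sources} then produces a contribution of order $c(p,q)\aver{z}^2\eta^{-6}\mathfrak{C}_1$. Summing the two families over the finitely many combinatorial choices of $(j,j')$, $(\kappa,\gamma',\gamma_\ast)$ yields the first inequality of \eqref{eqn:E_bound}.

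The second inequality is asymptotic in $\eps$ under the choice $\eta^{-1}=\ln\ln\eps^{-1}$: the prefactor $\aver{z}^2\eta^{-6}e^{(p+q)^2C_0 z/(2\eta^2)}$ grows as $e^{O((\ln\ln\eps^{-1})^2)}=\eps^{-o(1)}$, whereas $\mathfrak{C}_1$ and $\mathfrak{C}_2$ evaluated at $\delta\sim\eps/\eta$ are of order $\eps^{1-o(1)}$ for $d\ge 2$ and at worst $\eps^{1/2-o(1)}$ for $d=1$, so the product is dominated by $\eps^{1/3}$ once $\eps\le\eps_0(p,q,z)$. The main technical obstacle of the whole argument lies in the second family: the direct bound $\|L^1_{\gamma'}\|\le C\eta^{-2}$ is too crude to beat the Gronwall factor $e^{O(\eta^{-2})}$, so the smallness must come entirely from Proposition \ref{prop:linear_error}, which in turn requires $L^1_{\gamma'}$ to be paired with some $U_\eta(U^\eps_{\gamma_\ast}-\mathbb{I})$ whose index $\gamma_\ast$ shares exactly one entry with $\gamma'$. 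This is precisely the structural reason behind the partition $\Lambda=\Lambda_\kappa+\bar\Lambda_\kappa+\Lambda'_\kappa$ introduced in the proof of Theorem \ref{thm:Ueps}.
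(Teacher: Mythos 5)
Your proposal is correct and follows essentially the same route as the paper: Gr\"onwall on \eqref{eqn:E}, splitting the integrated source into the $L^2N^\eps_{p,q}$ family and the $L^1_{\gamma'}\prod_{\gamma\in\Lambda_\kappa}(U^\eps_\gamma-\mathbb{I})$ family, re-factoring the latter via commutativity so that Proposition \ref{prop:linear_error} applies to the pair $L^1_{\gamma'}U_\eta(U^\eps_{\gamma_\ast}-\mathbb{I})$, and invoking Lemma \ref{lemma:bound_time_dep_sources} with the bounds of Lemma \ref{lemma:U_gamma_bound} and Corollary \ref{corro:N_bound} before the final $\eta^{-1}=\ln\ln\eps^{-1}$ asymptotics. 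Your observation that $\rho_{\kappa,\gamma'}(0)$ is merely bounded by $1$ (rather than vanishing) when $m(\kappa)=1$ is in fact slightly more careful than the paper's statement and changes nothing in the estimate.
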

   \begin{proof}
   Integrating \eqref{eqn:E} and using Lemma \ref{lemma:L_pq_bound}, we have:
\begin{equation*}
    \|E_{p,q}^\eps(z)\|\le \frac{C_0(p+q)^2}{2\eta^2}\int_{0}^z\|E_{p,q}^\eps(s)\|\mathrm{d}s+\Big\|\int_{0}^z\mathcal{E}_{p,q}^{\eps}(s)\mathrm{d}s\Big\|\,.
\end{equation*}
From~\eqref{eqn:error_source}, the source $\mathcal{E}_{p,q}^{\eps}$ to the evolution equation~\eqref{eqn:E} of $E_{p,q}^\eps$ upon integration in $z$ is bounded by
\begin{equation*}
    \Big\|\int_{0}^z\mathcal{E}_{p,q}^{\eps}(s)\mathrm{d}s\Big\|\le \sum_{\kappa=1}^{K}\sum_{\gamma'\in\bar\Lambda_{\kappa}}
    \Big\|\int_{0}^zU_\eta^{\frac{p+q}{2}}L^1_{\gamma'}\prod_{\gamma\in\Lambda_{\kappa}}(U^\eps_\gamma-\mathbb{I})(s)\mathrm{d}
    s\Big\|+\Big\|\int_{0}^zL^2N_{p,q}^\eps(s)\mathrm{d}s\Big\|\,.
\end{equation*}
Note that thanks to \eqref{eq:commute} and by construction of $\bar\Lambda_{\kappa}$, the product $L^1_{\gamma'}\prod_{\gamma\in\Lambda_{\kappa}}(U^\eps_\gamma-\mathbb{I})$ can be re-arranged so that $L^1_{\gamma'}$ acts on $(U_{\gamma''}-\mathbb{I})$ such that $L^1_{\gamma'}$ and $L^1_{\gamma''}$ do not commute so that Proposition \ref{prop:linear_error} applies.  This gives us
with $m=m(\kappa)$,
\begin{equation*}
    \begin{aligned}
        \Big\|\int_{0}^zU_\eta^{\frac{p+q}{2}}L^1_{\gamma'}\prod_{\gamma\in\Lambda_{\kappa}}(U^\eps_\gamma-\mathbb{I})(s)\mathrm{d}s\Big\|&
          =\Big\|\int_{0}^zL^1_{\gamma'}{U_\eta}(U^\eps_{\gamma''}-\mathbb{I})(s){U_\eta}^{\frac{p+q}{2}-m}\prod_{\gamma''\neq \gamma\in\Lambda_{\kappa}}U_{\eta}(U^\eps_{\gamma}-\mathbb{I})(s)\mathrm{d}s\Big\|.
          %\\&\le c_1(p,q)\langle z\rangle^2\eta^{-6}\mathfrak{C}_1[\frac{C_2\eps}{\eta},\hat R,\hat R,d]\,,
    \end{aligned}
\end{equation*}
For $\chi\in {\mathcal M}_B(\Rm^{(p+q)d})$ with $\|\chi\|=1$,
define ${\rho}(s)={U_\eta}^{\frac{p+q}{2}-m}\prod_{\gamma''\neq \gamma\in\Lambda_{\kappa}}U_{\eta}(U^\eps_{\gamma}-\mathbb{I})(s)\chi$. We verify that $\rho(0)=0$ and using  \eqref{eqn:U_gamma_z_bound} in Lemma \ref{lemma:U_gamma_bound} obtain that
\begin{equation*}
  \begin{aligned}
    \|\partial_s\rho(s)\|&\le\big(\frac{p+q}{2}-m\big)|L_\eta U_\eta^{\frac{p+q}{2}-m}| \Big\|\prod_{\gamma''\neq \gamma\in\Lambda_{\kappa}}U_{\eta}(U^\eps_{\gamma}-\mathbb{I})(s)\Big\| +U_\eta^{\frac{p+q}{2}-m}\Big\|\partial_s\prod_{\gamma''\neq \gamma\in\Lambda_{\kappa}}U_{\eta}(U^\eps_{\gamma}-\mathbb{I})(s)\Big\|\\
    &\le \big(\frac{p+q}{2}-m\big)\frac{C_0}{\eta^2}e^{-\frac{C_0}{\eta^2}\big(\frac{p+q}{2}-m\big)}\big(1+e^{-\frac{C_0z}{\eta^2}}\big)^{m-1}+\frac{2(m-1)C_0}{\eta^2}e^{-\frac{C_0}{\eta^2}\big(\frac{p+q}{2}-m\big)}\big(1+e^{-\frac{C_0z}{\eta^2}}\big)^{m-1},
    \end{aligned}  
\end{equation*}
which is bounded by $c(p,q)\eta^{-2}$. From Lemma \ref{lemma:bound_time_dep_sources}, this gives
\begin{equation*}
\begin{aligned}
    \Big\|\int_{0}^zL^1_{\gamma'}{U_\eta}(U^\eps_{\gamma''}-\mathbb{I})(s){U_\eta}^{\frac{p+q}{2}-m} \!\!\! \prod_{\gamma''\neq \gamma\in\Lambda_{\kappa}}  \!\! U_{\eta}(U^\eps_{\gamma}-\mathbb{I})(s)\mathrm{d}s\Big\|&\le c_1(p,q)\eta^{-2}\langle z\rangle\sup\limits_{0\le z'\le z} \Big\|\int\limits_{z'}^zL^1_{\gamma'}{U_\eta}(U^\eps_{\gamma''}-\mathbb{I})(s)\mathrm{d}s\Big\|\\
    &\le c_1'(p,q)\langle z\rangle^2\eta^{-6}\mathfrak{C}_1[\frac{C_2\eps}{\eta},\hat \sR,\hat R,d]\,,
\end{aligned}
\end{equation*}
for $c_1(p,q)$ independent of $z$ and $\eps$, where we have used Proposition \ref{prop:linear_error} in the last inequality.
  Similarly, using Corollary \ref{corro:N_bound}, Lemma \ref{lemma:bound_time_dep_sources} and Proposition \ref{prop:quad_error} we have for $c_2(p,q)$ independent of $z$ and $\eps$, that
\begin{equation*}
    \begin{aligned}
        \Big\|\int_{0}^zL^2(s)N_{p,q}^\eps(s)\mathrm{d}s\Big\|&\le c_2(p,q)\langle z\rangle^2\eta^{-4}\mathfrak{C}_2[\frac{k_0\eps}{\eta},\hat{R},d]\,.
    \end{aligned}
\end{equation*}
Since $\eta\le 1$, we have that 
\begin{equation*}
      \Big\|\int_{0}^z\mathcal{E}_{p,q}^{\eps}(s)\mathrm{d}s\Big\|\le c(p,q)\langle z\rangle^2\eta^{-6}\big(\mathfrak{C}_1[\frac{C_2\eps}{\eta},{\hat{\sR}},\hat{R},d]+\mathfrak{C}_2[\frac{k_0\eps}{\eta},\hat{R},d]\big)\,,
\end{equation*}
where $c(p,q)$ is a constant independent of $\eps$ and $z$. The statement of the corollary is now a consequence of Gr\"{o}nwall's inequality. More precisely, we obtained a bound for $\|E_{p,q}^\eps(z)\|$ of the form $c(p,q,z)\eps^\gamma e^{\frac{C(p,q,z)}{\eta^2}}$ for $0<\gamma<\frac12$ and $c,C$ bounded uniformly on compact sets. Controlling this term dictates our choice of $\eta(\eps)$ such that $\eps^\gamma$ is much smaller than $e^{\frac{C(p,q,z)}{\eta^2}}$ is large. Let $L=\ln\eps^{-1}$ with $\eta^{-1}=\ln L$. We find for any $n\geq1$, for instance $n=2$, that
\[
\eps^\gamma e^{\frac{C}{\eta^n}} = e^{-\gamma L} e^{ C (\ln L)^n} \leq e^{-\gamma' L} = \eps^{\gamma'}
\]
when $\gamma'<\gamma$ for any $L$ sufficiently large and hence $\eps$ sufficiently small.
\end{proof}
The above bounds are clearly not uniform in the moment $(p,q)$. However, for each $(p,q)$ fixed, the error-term operator $E^\eps_{p,q}$ converges to $0$ as $\eps\to0$. As a consequence, we obtain that the solution $\hat \mu^\eps_{p,q}(z,v)$ of \eqref{eq:mupqF} is given by
\begin{equation}\label{eq:hatmupqsol}
    \hat \mu^\eps_{p,q}(z,v) = \Pi^\eps_{p,q}(z,v)[U^\eps \hat \mu^\eps_{p,q}(0)](z,v) =\Pi^\eps_{p,q}(z,v) [N^\eps   \hat \mu^\eps_{p,q}(0)](z,v) 
    + \Pi^\eps_{p,q}(z,v) [E^\eps   \hat \mu^\eps_{p,q}(0)](z,v) 
\end{equation}
with $\|\Pi^\eps_{p,q}(z,v) [E^\eps   \hat \mu^\eps_{p,q}(0)](z,v)\|\leq \eps^{\frac13}$ for $0<\eps\leq \eps_0(p,q,z)$.

To derive the results of Theorem \ref{thm:kinetic} for general incident beams as in \eqref{eq:u0plane}, we also need to derive approximations on the centered moments. They are defined in the Fourier variables as
\begin{equation}\label{eq:hatbarmu}
\widehat{\tilde \mu^\eps_{p,q}}(z,v) = \mathbb{E}[\prod\limits_{j=1}^p(\hat{u}^\eps(z,\xi_j)-\hat{\mu}^\eps_{1,0}(z,\xi_j))\prod\limits_{l=1}^q({\hat{u}^{\eps*}}(z,\zeta_l)-{\hat{\mu}^{\eps\ast}_{1,0}}(z,\zeta_l))].
\end{equation}
Let us define the operator
\begin{equation}\label{eqn:N_centered}
    \tilde{N}_{p,q}^\eps(z,v)=\begin{cases}
        0,\quad p\neq q\\
        \dsum_{\pi_p}\prod\limits_{j=1}^pU_\eta(U^\eps_{j,\pi_p(j)}-\mathbb{I}),\quad p=q,
    \end{cases}
\end{equation}
where the sum is over all permutations $\pi_p$ of the integers from $1$ to $p$. This operator plays the same role as $N^\eps_{p,q}$ for centered moments:
\begin{corollary}\label{corro:centred_mom}
Let $\widehat{\tilde \mu^\eps_{p,q}}(z,v)$ be the centered moments defined in \eqref{eq:hatbarmu}. Then we have
\begin{equation*}
    \widehat{\tilde \mu^\eps_{p,q}}(z,v)=\Pi^\eps_{p,q}(z,v)[\tilde{N}_{p,q}^\eps(z)\hat{\mu}^\eps_{p,q}(0)+\tilde{E}_{p,q}^\eps(z)\hat{\mu}^\eps_{p,q}(0)](z,v), \qquad    \sup\limits_{0\le z'\le z}\|\tilde{E}_{p,q}^\eps(z')\|
    \leq\eps^{\frac{1}{3}},
\end{equation*}
for $0<\eps < \eps_0(p,q,z)$.  
\end{corollary}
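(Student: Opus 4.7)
The plan is to reduce the claim to Theorem \ref{thm:Ueps} via a signed expansion of the centered product into non-centered moments. Writing $\tilde u^\eps(z,x) = u^\eps(z,x) - \mu^\eps_{1,0}(z,x)$ and expanding the pointwise product binomially inside the expectation, then Fourier transforming (each summand is a tensor product in disjoint spatial variables), gives
\[
\widehat{\tilde\mu^\eps_{p,q}}(z,v) = \sum_{S\subseteq[p],\,T\subseteq[q]}(-1)^{(p-|S|)+(q-|T|)}\,\hat\mu^\eps_{|S|,|T|}(z,v_{S,T})\prod_{j\notin S}\hat\mu^\eps_{1,0}(z,\xi_j)\prod_{l\notin T}\hat\mu^\eps_{0,1}(z,\zeta_l),
\]
with $v_{S,T}$ the dual variables indexed by $S\cup T$. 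I would substitute the representation $\hat\mu^\eps_{|S|,|T|}=\Pi^\eps_{|S|,|T|}(N^\eps_{|S|,|T|}+E^\eps_{|S|,|T|})\hat\mu^\eps_{|S|,|T|}(0)$ from Theorem \ref{thm:Ueps} and the explicit formula $\hat\mu^\eps_{1,0}(z,\xi)=U_\eta(z)^{1/2}\Pi^\eps_{1,0}(z,\xi)\hat u_0^\eps(\xi)$ (and its conjugate analogue) from Section \ref{sec:firstmoments}. The phase $\Pi^\eps_{p,q}=\prod_j\Pi^\eps_{1,0}(\xi_j)\prod_l\Pi^\eps_{0,1}(\zeta_l)$ is separable across variables, and the damping exponents combine as $U_\eta^{(|S|+|T|)/2}\cdot U_\eta^{(p-|S|)/2}\cdot U_\eta^{(q-|T|)/2}=U_\eta^{(p+q)/2}$, so $\Pi^\eps_{p,q}U_\eta^{(p+q)/2}$ factors out of the entire sum.

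What remains inside the brackets is, to leading order, a signed sum over $(S,T,\kappa)$ of operator strings $\prod_{\gamma\in\Lambda_\kappa}(U^\eps_\gamma-\mathbb{I})$ with $\Lambda_\kappa\subseteq S\times T$ (plus an $\mathbb{I}$ term coming from the second piece of $N^\eps_{|S|,|T|}$), all acting on $\hat\mu^\eps_{p,q}(0)$. The combinatorial core — which I expect to be the main obstacle — is to verify that this signed sum collapses exactly to $\tilde N^\eps_{p,q}\hat\mu^\eps_{p,q}(0)$ as defined in \eqref{eqn:N_centered}. I would exchange the order of summation: for each partial matching $\Lambda$ of $[p]$ with $[q]$ (no index repeated), the contribution is weighted by $\sum_{(S,T)\supseteq\Lambda}(-1)^{(p-|S|)+(q-|T|)}$. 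Denoting by $I(\Lambda)\subseteq[p]$ and $J(\Lambda)\subseteq[q]$ the indices touched by $\Lambda$ and setting $a=p-|I(\Lambda)|$, $b=q-|J(\Lambda)|$, this weight factors as $\bigl(\sum_{A\subseteq[p]\setminus I(\Lambda)}(-1)^{a-|A|}\bigr)\bigl(\sum_{B\subseteq[q]\setminus J(\Lambda)}(-1)^{b-|B|}\bigr)$, which vanishes unless $a=b=0$. Hence only perfect matchings survive, forcing $p=q$, and reproducing exactly the permutation sum $\sum_{\pi_p}\prod_j U_\eta(U^\eps_{j,\pi_p(j)}-\mathbb{I})$ in \eqref{eqn:N_centered}. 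The stray $\mathbb{I}$ term likewise vanishes since $\sum_{S,T}(-1)^{(p-|S|)+(q-|T|)}=0$ whenever $p+q\geq 1$. This is, in essence, Wick's cancellation for circularly symmetric complex Gaussians transported to the operator level.

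For the error estimate, each term $\Pi^\eps_{|S|,|T|}E^\eps_{|S|,|T|}\hat\mu^\eps_{|S|,|T|}(0)$ multiplied by $\prod_{j\notin S}\hat\mu^\eps_{1,0}(z,\xi_j)\prod_{l\notin T}\hat\mu^\eps_{0,1}(z,\zeta_l)$ is bounded in total variation by $c(p,q,z)\eps^{1/3}$, using the operator-norm bound of Theorem \ref{thm:Ueps} and the uniform total-variation bound on $\hat u_0^\eps$ provided by the standing hypotheses on the incident beam. Summing over the $2^{p+q}$ subsets $(S,T)$ and collecting these remainders into $\tilde E^\eps_{p,q}$ yields $\sup_{0\le z'\le z}\|\tilde E^\eps_{p,q}(z')\|\le\eps^{1/3}$ for $0<\eps<\eps_0(p,q,z)$, as claimed.
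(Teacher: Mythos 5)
Your proposal is correct and follows essentially the same route as the paper: expand the centered moment as a signed (inclusion--exclusion) sum of non-centered moments times first moments, substitute the $N^\eps+E^\eps$ decomposition from Theorem \ref{thm:Ueps}, and observe that for each partial matching the alternating weight $\sum_{(S,T)\supseteq(I(\Lambda),J(\Lambda))}(-1)^{(p-|S|)+(q-|T|)}$ factors and vanishes unless the matching is perfect, so that only the permutation sum defining $\tilde N^\eps_{p,q}$ survives and the remainders collect into $\tilde E^\eps_{p,q}$ with the stated bound. This is precisely the paper's argument, including the vanishing of the pure product term via $\sum_m\binom{p}{m}(-1)^{p-m}=0$.
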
    
\begin{proof}
Note that
\begin{equation*}
\begin{aligned}
    \widehat{\tilde \mu^\eps_{p,q}}(z,v)=\sum\limits_{m=0}^p\sum\limits_{n=0}^q\sum\limits_{S^p_m}\sum\limits_{T^q_n}(-1)^{p+q-m-n}\hat{\mu}^\eps_{m,n}(z,\xi_{j_1},\cdots\xi_{j_m},\zeta_{l_1},\cdots,\zeta_{l_n})\prod\limits_{j\notin S^p_m}\hat{\mu}^\eps_{1,0}(z,\xi_j)\prod\limits_{l\notin T^q_n}\hat{\mu}_{1,0}^{\eps\ast}(z,\zeta_l)\,,
    \end{aligned}
\end{equation*}
where the sums are over all sets $S^p_m=\{j_1,\cdots,j_m\}$ that contain $m$ integers drawn from $1$ to $p$ and all sets $T^q_n=\{l_1,\cdots,l_n\}$ that contain $n$ integers drawn from $1$ to $q$.
Using the decomposition of $U^\eps_{m,n}$ in Theorem \ref{thm:Ueps} for the $m+n$th moment on variables $(\xi_{j_1},\cdots\xi_{j_m},\zeta_{l_1},\cdots,\zeta_{l_n})$ gives 
\begin{equation*}
        \begin{aligned}
            \hat{\mu}^\eps_{m,n}(z,\xi_{j_1},\cdots\xi_{j_m},\zeta_{l_1},\cdots,\zeta_{l_n})&=\Pi^\eps_{m,n}(z)[N_{m,n}^\eps(z)\hat{\mu}^\eps_{m,n}(0)+E_{m,n}^\eps(z)\hat{\mu}^\eps_{m,n}(0)](z,\xi_{j_1},\cdots\xi_{j_m},\zeta_{l_1},\cdots,\zeta_{l_n})\,.
        \end{aligned}
\end{equation*}
Define $\epsilon^{pq}_{mn}=(-1)^{p+q-m-n}$.
For $K(m,n)=\sum_{l=1}^{m\wedge n} \binom{m}{l} \binom{n}{l} \, l! $, using the expansion for $N_{m,n}^\eps$  \eqref{eqn:N_def} gives
\begin{equation*}
    \begin{aligned}        &\sum\limits_{m=0}^p\sum\limits_{n=0}^q\sum\limits_{S^p_m}\sum\limits_{T^q_n}\epsilon^{pq}_{mn}[\Pi^\eps_{m,n}(z)N_{m,n}^\eps(z)\hat{\mu}^\eps_{m,n}(0)](z,\xi_{j_1},\cdots\xi_{j_m},\zeta_{l_1},\cdots,\zeta_{l_n})\prod\limits_{j\notin S^p_m}\hat{\mu}^\eps_{1,0}(z,\xi_j)\prod\limits_{l\notin T^q_n}\hat{\mu}_{1,0}^{\eps\ast}(z,\zeta_l)\\    &=\sum\limits_{m=0}^p\sum\limits_{n=0}^q\sum\limits_{S^p_m}\sum\limits_{T^q_n}\epsilon^{pq}_{mn}\sum\limits_{\kappa=1}^{K(m,n)}\prod\limits_{\gamma\in\Lambda_\kappa}[\hat{\mu}^\eps_{1,1}(z,\xi_{\gamma_1},\zeta_{\gamma_2})-\hat{\mu}_{0,1}^\eps(z,\xi_{\gamma_1})\hat{\mu}_{1,0}^{\eps\ast}(z,\zeta_{\gamma_2})]\prod\limits_{j\notin \Lambda_{\kappa,1}}\hat{\mu}^\eps_{1,0}(z,\xi_j)\prod\limits_{l\notin \Lambda_{\kappa,2}}\hat{\mu}_{1,0}^{\eps\ast}(z,\zeta_l)
\\&+\sum\limits_{m=0}^p\sum\limits_{n=0}^q\sum\limits_{S^p_m}\sum\limits_{T^q_n}\epsilon^{pq}_{mn}\prod\limits_{j=1}^p\hat{\mu}^\eps_{1,0}(z,\xi_j)\prod\limits_{l=1}^q\hat{\mu}_{1,0}^{\eps\ast}(z,\zeta_l)\,.
    \end{aligned}
\end{equation*}
Here, $\Lambda_{\kappa,1}$ denotes the set of all indices in the first position in the pairs in $\Lambda_{\kappa}$ and $\Lambda_{\kappa,2}$ denotes the set of all indices in the second position.  Since $\sum_{m=0}^p \sum_{S^p_m} (-1)^{p-m}=\sum_{m=0}^p \binom{p}{m}(-1)^{p-m}=0$, the last term above vanishes.
Let now $\Lambda_\kappa$ be fixed with $1\le k_1\le p\wedge q$ elements. This also fixes the indices $j\not\in \Lambda_{\kappa,1}$ and $l\not\in \Lambda_{\kappa,2}$ and hence the product term over $\gamma\in\Lambda_{\kappa}$ while $j\not\in \Lambda_{\kappa,1}$ and $l\not\in \Lambda_{\kappa,2}$. When $k_1<p\wedge q$, this product term appears multiplied by
\begin{equation*}
    \sum\limits_{m=k_1}^p\sum\limits_{n=k_1}^q\sum\limits_{S^{p-k_1}_{m-k_1}}\sum\limits_{T^{q-k_1}_{n-k_1}}(-1)^{p+q-m-n}=0
\end{equation*}
for the same reason as above.  Suppose now $k_1=p<q$ (or symmetrically $k_1=q<p$). Then such a product term appears multiplied by
\begin{equation*}
    \sum\limits_{m=k_1}^p\sum\limits_{n=k_1}^q\sum\limits_{S^{p-k_1}_{m-k_1}}\sum\limits_{T^{q-k_1}_{n-k_1}}(-1)^{p+q-m-n}=0
\end{equation*}
as well. Thus only remains the term with $k_1=p=q$, in which case $\Lambda_\kappa$ appears once. Noting that 
\begin{equation*}
    \begin{aligned}
    \sum\limits_{\Lambda_p}\prod\limits_{\gamma\in\Lambda_p}[\hat{\mu}^\eps_{1,1}(z,\xi_{\gamma_1},\zeta_{\gamma_2})-\hat{\mu}_{0,1}^\eps(z,\xi_{\gamma_1})\hat{\mu}_{0,1}^{\eps\ast}(z,\zeta_{\gamma_2})]=\Pi^\eps_{p,q}(z,v)\sum\limits_{\pi_p}\prod\limits_{j=1}^pU_\eta(U^\eps_{j,\pi_p(j)}-\mathbb{I})\hat{\mu}^\eps_{p,q}(0,v)\,,
    \end{aligned}
\end{equation*}
we obtain the form in \eqref{eqn:N_centered} for $\tilde{N}_{p,q}^\eps$. As an application of Theorem \ref{thm:Ueps}, we deduce that the remainder $\tilde E^\eps_{p,q}$ is indeed negligible as indicated in the corollary.
\end{proof}
%%%
%%%
\subsection{Approximation of moments in physical variables}\label{subsec:mu_pq_phys}
%%%
The error analysis was performed in the Fourier domain so far. In order to find approximations for moments in physical space, we need to undo the phase compensation followed by an inverse Fourier transform.

Let $\mu_{p,q}^\eps(0,X,Y)=\mu^\eps_0(X,Y)$ be a sequence of incident conditions such that as a measure in the Fourier variables, $\|\hat\mu_{p,q}^\eps(0)\|\leq C$ uniformly in $\eps$ and let us consider the equation
\begin{equation}\label{eq:mupqgal}
    (\partial_z - {\mathcal L}^\eps_{p,q}) \mu_{p,q}^\eps =0 ,\qquad \mu_{p,q}^\eps(0)=\mu^\eps_0
\end{equation}
with ${\mathcal L}^\eps_{p,q}$ defined in \eqref{eq:mupqpde}. 

This equation is solved in the Fourier variables for $\hat\mu_{p,q}^\eps(z,v)$. Using the decomposition $U^\eps=N_{p,q}^\eps+E_{p,q}^\eps$ and \eqref{eqn:phase_correc},  we directly obtain from Theorem \ref{thm:Ueps} the slight generalization of \eqref{eq:hatmupqsol} for arbitrary initial conditions:
\begin{equation*}
    \hat{\mu}_{p,q}^\eps(z,v)=\Pi^\eps_{p,q}(z,v) 
    \Big([N_{p,q}^\eps(z)\hat{\mu}_{p,q}^\eps(0)](z,v) + [E_{p,q}^\eps(z)\hat{\mu}_{p,q}^\eps(0)](z,v)\Big).
\end{equation*}
After inverse Fourier transform, we thus obtain
\begin{equation}\label{eq:decmueps}
        \begin{aligned}
            \mu_{p,q}^\eps(z,X,Y)&=\mathscr{N}_{p,q}^{\eps}(z,X,Y)+\mathscr{E}_{p,q}^{\eps}(z,X,Y)\,,
        \end{aligned}
\end{equation}
where %\gb{What is $N^\eps(z,v)$? Removed above but makes no sense below.}
\begin{equation}\label{eqn:N_tilde}
       \mathscr{N}_{p,q}^{\eps}(z,X,Y)= \int_{\Rm^{d(p+q)}} e^{i(\sum_{j=1}^p\xi_j\cdot x_j-\sum_{l=1}^q\zeta_l\cdot y_l)}\Pi^\eps_{p,q}(z,v)
       [N_{p,q}^\eps(z)\hat{\mu}_{p,q}^\eps(0)](z,v) \dfrac{\mathrm{d}\xi_1\cdots\mathrm{d}\xi_p\mathrm{d}\zeta_1\cdots\mathrm{d}\zeta_q}{(2\pi)^{pd+qd}},
\end{equation}
and $\mathscr{E}_{p,q}^{\eps}(z,X,Y)$ similarly defined replacing $N_{p,q}^\eps$ above by $E_{p,q}^\eps$.

We now define the functional giving higher-order moments of complex Gaussian variables in terms of first and second moments:
 \begin{equation}\label{eq:F}
 \mathscr{F}(h_1,\cdots,h_p,h'_1,\cdots,h'_q,g_{1,1},\cdots,g_{p,q})=\sum_{\kappa=1}^{K}\prod_{\gamma\in\Lambda_{\kappa}}[g_{\gamma}-h_{\gamma_1}h'_{\gamma_2}]\prod_{j\notin\Lambda_{\kappa,1}} \! \! h_{j}\prod_{l\notin\Lambda_{\kappa,2}} \! \! h'_{l} +\prod_{j=1}^ph_j\prod_{l=1}^qh'_l.
\end{equation}
Note that $\mathscr{F}$ is a bounded and continuous function of its $p+q+pq$ arguments.

The equivalent functional for centered Gaussian variables is defined as 
\begin{eqnarray}\label{eqn:bar_F}
    \widetilde{\mathscr{F}}(\tilde{g}_{1,1},\cdots,\tilde{g}_{p,q})=\begin{cases}
    0,\quad p\neq q\\
    \dsum_{\pi_p}\prod\limits_{j=1}^p\tilde{g}_{j,\pi_p(j)},\quad p=q\,,
    \end{cases}
\end{eqnarray}
where the sum is again over all permutations $\pi_p$ of the integers from $1$ to $p$.

For a function $f(X,Y)$ we define as usual $\|f\|_\infty=\sup_{(X,Y)\in\Rm^{(p+q)d}} |f(X,Y)|$. Then we have the following estimate for the solutions of \eqref{eq:mupqgal} and \eqref{eq:mupqpde}.

%%%%%
\begin{theorem}\label{thm:mupqphys}
%%%%%
    Let $\mu_{p,q}^\eps(z,X,Y)$ be the solution of \eqref{eq:mupqgal} with incident condition such that $\|\hat\mu_{p,q}^\eps(0)\|\leq C$ uniformly in $\eps$. Then the decomposition \eqref{eq:decmueps} holds with 
\begin{equation}\label{eq:bdmueps}
  \|\mu_{p,q}^\eps(z)\|_\infty \leq c(p,q,z)  \sup_{0<\eps\leq 1} \|\hat \mu_{p,q}^\eps(0)\|,\qquad 
  \|\mathscr{E}_{p,q}^\eps(z)\|_\infty \leq c(p,q,z) \eps^{\frac13}\, \sup_{0<\eps\leq 1}\|\hat \mu_{p,q}^\eps(0)\|.
\end{equation}

    Let us now consider $\mu^\eps_{p,q}(z,X,Y)$ as the solution to \eqref{eq:mupqpde} with incident conditions in the form of a Cartesian product as stated there. Then \eqref{eq:bdmueps} applies to $\mu^\eps_{p,q}(z,X,Y)$ with, moreover,
  \begin{equation*}
  \begin{aligned}
      \mu_{p,q}^{\eps }(z,X,Y)&=\mathscr{F}(\mu^\eps_{1,0}(z,x_1),\ldots,\mu^\eps_{1,0}(z,x_p),\mu^\eps_{0,1}(z,y_1),\ldots,\mu^\eps_{0,1}(z,y_q),\mu^\eps_{1,1}(z,x_1,y_1),\ldots,\mu^\eps_{1,1}(z,x_p,y_q))
      \\&
      +\mathscr{E}_{p,q}^{\eps}(z,X,Y)
  \end{aligned}
      \end{equation*}
  where $\mathscr{F}$ is defined in \eqref{eq:F}.
  
     The central moments defined in~\eqref{eqn:mu_bar_pq} are decomposed as
     \begin{eqnarray}\label{eqn:centred_mu_pq_approx}
         \tilde{\mu}^\eps_{p,q}(z,X,Y)=\widetilde{\mathscr{F}}(\tilde{\mu}^\eps_{1,1}(z,x_1,y_1),\cdots,\tilde{\mu}^\eps_{1,1}(z,x_p,y_q))+\tilde{\mathscr{E}}_{p,q}^\eps(z,X,Y)\,,
     \end{eqnarray}
     with $\widetilde{\mathscr{F}}$ defined in \eqref{eqn:bar_F} and for $0<\eps<\eps_0(z,p,q)$ uniformly bounded on compact sets,
     \begin{eqnarray}
         \sup\limits_{0\le z'\le z}|\tilde{\mathscr{E}}_{p,q}^\eps(z',X,Y)|\le \ \|\hat{\mu}^\eps_{p,q}(0)\|\ \eps^{\frac{1}{3}}.
     \end{eqnarray}
%%% 
\end{theorem}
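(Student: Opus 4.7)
The plan is to prove the three assertions in sequence, leveraging the operator-level decomposition already established.

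\textbf{Stage 1: General initial data.} I would invert \eqref{eq:mupqgal} in Fourier variables to obtain $\hat\mu^\eps_{p,q}(z,v) = \Pi^\eps_{p,q}(z,v)\,[U^\eps(z)\hat\mu^\eps_{p,q}(0)](v)$, then apply the splitting $U^\eps = N^\eps_{p,q}+E^\eps_{p,q}$ from Theorem~\ref{thm:Ueps} and inverse-Fourier-transform each piece to obtain \eqref{eq:decmueps}. The $\sL^\infty$ bounds \eqref{eq:bdmueps} follow from the elementary fact that the inverse Fourier transform of a finite signed measure $\nu$ satisfies $\|\check\nu\|_\infty\leq(2\pi)^{-d(p+q)}\|\nu\|$, combined with the observation that the unimodular phase $\Pi^\eps_{p,q}$ preserves total variation, the uniform bound $\|N^\eps_{p,q}(z)\|\leq(2pq)^{p\wedge q+1}$ from Corollary~\ref{corro:N_bound}, and the estimate $\|E^\eps_{p,q}\|\leq\eps^{1/3}$ from Theorem~\ref{thm:Ueps}.

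\textbf{Stage 2: Cartesian product initial data and the functional $\mathscr{F}$.} The initial condition factorizes as $\hat\mu^\eps_{p,q}(0,v)=\prod_j\hat u_0^\eps(\xi_j)\prod_l\hat u_0^{\eps*}(\zeta_l)$, and I would show that $\mathscr{N}^\eps_{p,q}$ equals $\mathscr{F}$ applied to the explicit first and second moments by exploiting three structural features: the phase factorizes, $\Pi^\eps_{p,q}(z,v)=\prod_j e^{-\frac{iz\eta}{2k_0\eps}|\xi_j|^2}\prod_l e^{\frac{iz\eta}{2k_0\eps}|\zeta_l|^2}$; the operator $U^\eps_\gamma$ for $\gamma=(j,l)$ acts only on the pair $(\xi_j,\zeta_l)$; and within any fixed $\Lambda_\kappa$ the pairs are disjoint, so the factors in $\prod_{\gamma\in\Lambda_\kappa}(U^\eps_\gamma-\mathbb{I})$ decouple across orthogonal subsets of variables. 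Using the scalar decomposition $U_\eta^{(p+q)/2}=U_\eta^{m(\kappa)}\cdot U_\eta^{(p+q-2m(\kappa))/2}$ to allocate one $U_\eta$ to each paired index and one $U_\eta^{1/2}$ to each unpaired index, I would then identify each piece by inverse Fourier transform: the $(1,1)$-specific block $\Pi^\eps_{1,1}U_\eta(U^\eps_{(1,1)}-\mathbb{I})[\hat u_0^\eps\hat u_0^{\eps*}]$ inverts to $\mu^\eps_{1,1}(z,x_{\gamma_1},y_{\gamma_2})-\mu^\eps_{1,0}(z,x_{\gamma_1})\mu^\eps_{0,1}(z,y_{\gamma_2})$ (using that for $p=q=1$ the equation has no $L^2$ contribution and the operator structure $U^\eps=U_\eta U^\eps_{(1,1)}$), while each unpaired single-variable factor $U_\eta^{1/2}e^{-\frac{iz\eta}{2k_0\eps}|\xi_j|^2}\hat u_0^\eps(\xi_j)$ inverts to $\mu^\eps_{1,0}(z,x_j)$ (analogously for $\mu^\eps_{0,1}$). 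Summing over $\kappa$ and adjoining the identity term in \eqref{eqn:N_def}, which yields the pure product $\prod_j\mu^\eps_{1,0}(z,x_j)\prod_l\mu^\eps_{0,1}(z,y_l)$, reproduces \eqref{eq:F} term by term.

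\textbf{Stage 3: Centered moments.} I would invoke Corollary~\ref{corro:centred_mom} to obtain $\widehat{\tilde\mu^\eps_{p,q}}(z,v)=\Pi^\eps_{p,q}(z,v)[\tilde N^\eps_{p,q}(z)+\tilde E^\eps_{p,q}(z)]\hat\mu^\eps_{p,q}(0,v)$ with $\|\tilde E^\eps_{p,q}\|\leq\eps^{1/3}$. The operator $\tilde N^\eps_{p,q}$ in \eqref{eqn:N_centered} vanishes unless $p=q$, in which case it is a sum over permutations $\pi_p$ of $\prod_{j=1}^p U_\eta(U^\eps_{j,\pi_p(j)}-\mathbb{I})$; since $\pi_p$ is a bijection, the pairs $\{(j,\pi_p(j))\}_{j=1}^p$ share no coordinate, so the factors commute and act on disjoint variable pairs. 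Repeating the Stage~2 decoupling, each factor produces $\tilde\mu^\eps_{1,1}(z,x_j,y_{\pi_p(j)})$ after Fourier inversion, yielding \eqref{eqn:centred_mu_pq_approx}, and the error bound on $\tilde{\mathscr{E}}^\eps_{p,q}$ again follows from the $\sL^\infty$-versus-TV inequality.

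\textbf{Main obstacle.} The analytical input is essentially already packaged into Theorem~\ref{thm:Ueps} and Corollary~\ref{corro:centred_mom}; the real work lies in the combinatorial bookkeeping of Stage~2, namely confirming that \eqref{eqn:N_def} matches \eqref{eq:F} term by term with the correct allocation of $U_\eta$ powers between paired and unpaired indices and the correct factorization of $\Pi^\eps_{p,q}$ across single-variable phases. Once this matching is verified, everything else is a straightforward consequence of the inverse-Fourier-transform estimate on bounded measures.
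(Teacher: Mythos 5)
Your proposal is correct and follows essentially the same route as the paper: inverse Fourier transform of the $U^\eps=N^\eps_{p,q}+E^\eps_{p,q}$ splitting with the TV-to-uniform estimate for the bounds, the allocation of $U_\eta$ to paired indices and $U_\eta^{1/2}$ to unpaired ones together with the factorization of $\Pi^\eps_{p,q}$ to match $\mathscr{N}^\eps_{p,q}$ with $\mathscr{F}$ for separable data, and Corollary~\ref{corro:centred_mom} for the centered moments. The identification of the $(1,1)$ block via $U^\eps=U_\eta U^\eps_{(1,1)}$ (no $L^2$ term when $p=q=1$) is exactly the bookkeeping the paper carries out in its displayed computation.
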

%%%%
\begin{proof} By the decomposition \eqref{eq:decmueps}, we deduce that 
    \begin{equation*}
        \begin{aligned}
           \sup_{(X,Y)\in\mathbb{R}^{pd+qd}} |\mathscr{E}_{p,q}^{\eps}(z',X,Y)|&\le   
            \int_{\Rm^{d(p+q)}} |(E_{p,q}^\eps\hat{\mu}_{p,q}^\eps)(z',v)| \frac{\mathrm{d}v }{(2\pi)^{pd+qd}} \leq 
           \frac{1}{(2\pi)^{pd+qd}} \|E_{p,q}^\eps(z')\| \|\hat\mu^\eps_{p,q}(0) \| ,
          \end{aligned}
    \end{equation*}
    so that \eqref{eqn:E_bound} leads to the second estimate in \eqref{eq:bdmueps} both for $\eta(\eps)=1$ and $\eta^{-1}=\ln\ln\eps^{-1}$. The first estimate in \eqref{eq:bdmueps} is then a direct consequence of the uniform bound $\|N_{p,q}^\eps\|\leq c(p,q)$ in \eqref{eq:bdNeps}. 

    Let us now consider the setting of $\mu^\eps_{p,q}$ with an incident condition written as a Cartesian product form as in \eqref{eq:mupqpde}. Still denoting  $\mu^\eps_{p,q}=\mathscr{N}_{p,q}^{\eps}+\mathscr{E}_{p,q}^{\eps}$, then \eqref{eq:bdmueps} holds for $\mu^\eps_{p,q}$. Moreover, using the expansion for $N_{p,q}^\eps$ in~\eqref{eqn:N_def} and the definition of $\Pi_{p,q}^{\eps}$, we obtain the decomposition
    \begin{equation*}
        \begin{aligned}
            \mathscr{N}_{p,q}^{\eps}(z,X,Y)&=\sum_{\kappa=1}^{K}\int_{\Rm^{d(p+q)}}  \prod_{\gamma\in\Lambda_{\kappa}}e^{i(\xi_{\gamma_1}\cdot x_{\gamma_1}-\zeta_{\gamma_2}\cdot y_{\gamma_2})}
            e^{-\frac{iz\eta}{2k_0\eps}(|\xi_{\gamma_1}|^2-|\zeta_{\gamma_2}|^2)}
            U_{\eta}(U_{\gamma}^{\eps}-\mathbb{I})\\
            &\quad\times\prod_{j\notin\Lambda_{\kappa,1}}e^{i\xi_{j}\cdot x_{j}}e^{-\frac{iz\eta}{2k_0\eps}|\xi_{j}|^2}U_{\eta}^{\frac{1}{2}}\prod_{l\notin\Lambda_{\kappa,2}}e^{-i\zeta_{l}\cdot y_{l}}e^{\frac{iz\eta}{2k_0\eps}|\zeta_{l}|^2}U_{\eta}^{\frac{1}{2}}\hat{\mu}^\eps_{p,q}(0,v)\frac{dv}{(2\pi)^{pd+qd}} \\
            &\quad +\int_{\Rm^{d(p+q)}}  \prod_{j=1}^pe^{i\xi_{j}\cdot x_{j}}e^{-\frac{iz\eta}{2k_0\eps}|\xi_{j}|^2}U_{\eta}^{\frac{1}{2}}\prod_{l=1}^qe^{-i\zeta_{l}\cdot y_{l}}e^{\frac{iz\eta}{2k_0\eps}|\zeta_{l}|^2}U_{\eta}^{\frac{1}{2}}\hat{\mu}^\eps_{p,q}(0,v) \frac{dv}{(2\pi)^{pd+qd}}\,.
        \end{aligned}
    \end{equation*}
    When $\hat{\mu}^\eps_{p,q}(0,v)$ has the separable structure given in \eqref{eq:mupqpde}, then $\mathscr{N}_{p,q}^{\eps}$ takes the form
    \begin{equation*}
        \begin{aligned}
           \mathscr{N}_{p,q}^{\eps}(z,X,Y)=\sum_{\kappa=1}^{K}\int_{\Rm^{d(p+q)}}  \prod_{\gamma\in\Lambda_{\kappa}}e^{i(\xi_{\gamma_1}\cdot x_{\gamma_1}-\zeta_{\gamma_2}\cdot y_{\gamma_2})}[\hat{\mu}_{1,1}^\eps(z,\xi_{\gamma_1},\zeta_{\gamma_2})-\hat{\mu}^\eps_{1,0}(z,\xi_{\gamma_1}){\hat{\mu}_{0,1}^\eps}(z,\zeta_{\gamma_2})] \\
            \times\!\! \prod_{j\not\in\Lambda_{\kappa,1}} \!\! e^{i\xi_{j}\cdot x_{j}}\hat{\mu}_{1,0}^\eps(z,\xi_{j})\prod_{l\not\in\Lambda_{\kappa,2}}
            {\hat{\mu}_{0,1}^\eps}(z,\zeta_l)  \frac{e^{-i\zeta_{l}\cdot y_{l}}dv}{(2\pi)^{pd+qd}}
            +\int_{\Rm^{d(p+q)}}  \prod_{j=1}^pe^{i\xi_{j}\cdot x_{j}}
            \hat{\mu}^\eps_{1,0}(z,\xi_{j})\prod_{l=1}^q {\hat{\mu}_{0,1}^\eps}(z,\zeta_l)  \frac{e^{-i\zeta_{l}\cdot y_{l}}dv}{(2\pi)^{pd+qd}}\\
            =\mathscr{F}(\mu^\eps_{1,0}(z,x_1),\ldots,\mu^\eps_{1,0}(z,x_p),\mu^\eps_{0,1}(z,y_1),\ldots,\mu^\eps_{0,1}(z,y_q),\mu^\eps_{1,1}(z,x_1,y_1),\ldots,\mu^\eps_{1,1}(z,x_p,y_q))\,,
        \end{aligned}
    \end{equation*}
    where $\mu_{0,1}^\eps=\mu_{1,0}^{\eps*}$. The decomposition in~\eqref{eqn:centred_mu_pq_approx} is derived similarly using Corollary~\ref{corro:centred_mom}.
  \end{proof}
\begin{remark}(Gaussian summation rule) \label{rem:GSR}
If $\mathbf{Z}=(Z_1,\cdots,Z_N)$ is a circularly symmetric Gaussian random vector, all its moments can be completely described using second moments as~\cite{reed1962moment} 
\begin{equation}\label{eqn:Z_mom}
    \mathbb{E}[\prod_{j=1}^pZ_{s_j}\prod_{l=1}^qZ^\ast_{t_l}]=\begin{cases}
        0,\quad p\neq q\\
        \sum_{\pi_p}\prod_{j=1}^p\mathbb{E}[Z_{s_j}Z^\ast_{t_{\pi_p(j)}}],\quad p=q\,.
    \end{cases}
\end{equation}
Here, $s_j$ and $t_l$ are integers drawn from $\{1,\cdots,N\}$ for $1\le j\le p, 1\le l\le q$ and the sum is over all permutations $\pi_p$ of $\{1,\cdots,p\}$. This precisely corresponds to the arrangement in~\eqref{eqn:bar_F}.    
\end{remark}
We conclude this section with the following corollaries of Theorem \ref{thm:mupqphys}:
\begin{corollary}\label{cor:duhamel}
    In addition to the hypotheses of the above theorem, let $S^\eps(s,X,Y)$ be a measurable function such that $\hat S^\eps(s,v) \in C^0(\Rm_+;{\mathcal M}_B)(\Rm^{(p+q)d})$ with $\sup_{s\geq0}\|\hat S^\eps(s)\|\leq C$ uniformly in $\eps$. Let $\mu^\eps(z,X,Y)$ be the solution of
     \begin{equation}\label{eq:mupqgals}
    (\partial_z - {\mathcal L}^\eps_{p,q}) \mu^\eps =S^\eps(z) ,\qquad \mu^\eps(0)= \mu^\eps_0.
\end{equation}
Then we have
\begin{equation}\label{eq:bdduhamel}
 \|\mu^\eps(z)\|_\infty \leq c(p,q,z)  \big(\sup_{0<\eps\leq1}\|\hat \mu^\eps_0\|+\sup_{0\leq s\leq z; 0<\eps\leq1}\|\hat S^\eps(s)\| \big).
\end{equation}
\end{corollary}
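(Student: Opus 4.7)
The approach is Duhamel's principle applied in the physical variables. Let $\mu^\eps_{\mathrm{hom}}(z)$ denote the solution of the homogeneous equation $(\partial_z - \mathcal{L}^\eps_{p,q})\mu = 0$ with $\mu(0) = \mu^\eps_0$, and for each $s \in [0,z]$ let $\nu^\eps(\cdot\,;s)$ denote the solution of the same homogeneous equation on $[s,\infty)$ with initial datum $S^\eps(s)$ prescribed at $z = s$. Since $\mathcal{L}^\eps_{p,q}$ is independent of $z$, Duhamel's identity reads
\begin{equation*}
    \mu^\eps(z) = \mu^\eps_{\mathrm{hom}}(z) + \int_0^z \nu^\eps(z;s)\,\mathrm{d}s,
\end{equation*}
and the integral is well-defined pointwise in $(X,Y)$ thanks to the $C^0$ hypothesis on $s\mapsto\hat S^\eps(s)$.

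I then apply Theorem \ref{thm:mupqphys} to each piece separately. For the first, the estimate \eqref{eq:bdmueps} directly yields $\|\mu^\eps_{\mathrm{hom}}(z)\|_\infty \le c(p,q,z)\sup_{0<\eps\le 1}\|\hat\mu^\eps_0\|$. For each fixed $s\in[0,z]$, time-translation invariance identifies $\nu^\eps(z;s)$ with the homogeneous solution at time $z-s$ started from initial data $S^\eps(s)$, whose Fourier transform has total variation $\|\hat S^\eps(s)\|$. A second application of \eqref{eq:bdmueps} then produces $\|\nu^\eps(z;s)\|_\infty \le c(p,q,z-s)\|\hat S^\eps(s)\|$ uniformly in $s$.

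Finally, the pointwise triangle inequality
\begin{equation*}
    |\mu^\eps(z,X,Y) - \mu^\eps_{\mathrm{hom}}(z,X,Y)| \le \int_0^z |\nu^\eps(z,X,Y;s)|\,\mathrm{d}s \le \int_0^z \|\nu^\eps(z;s)\|_\infty\,\mathrm{d}s
\end{equation*}
followed by a supremum over $(X,Y)$ gives
\begin{equation*}
    \|\mu^\eps(z)\|_\infty \le c(p,q,z)\sup_{0<\eps\le 1}\|\hat\mu^\eps_0\| + z\,\sup_{0\le s\le z}c(p,q,s)\,\sup_{\substack{0\le s\le z\\0<\eps\le 1}}\|\hat S^\eps(s)\|,
\end{equation*}
which is \eqref{eq:bdduhamel} after absorbing the prefactors into a single $(p,q,z)$-dependent constant. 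The only real subtlety is the rigorous justification of Duhamel's identity in this setting, which I would carry out in the phase-compensated variables $\psi^\eps = (\Pi^\eps_{p,q})^{-1}\hat\mu^\eps$ of \eqref{eqn:phase_correc}: the evolution equation is then driven by the uniformly bounded operator $L^\eps_{p,q}$ of Lemma \ref{lemma:L_pq_bound}, Picard iteration on $\mathcal{M}_B(\Rm^{(p+q)d})$ produces the propagator and hence the Duhamel formula for $\psi^\eps$, and undoing the phase compensation and inverse Fourier transform transfers the identity to $\mu^\eps$.
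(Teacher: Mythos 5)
Your proposal is correct and follows essentially the same route as the paper: Duhamel's formula $\mu^\eps(z)=\tilde U^\eps(z,0)\mu^\eps_0+\int_0^z\tilde U^\eps(z,s)S^\eps(s)\,\mathrm{d}s$, the bound $\|\mu^\eps(z)\|_\infty\le\|\tilde U^\eps(z,0)\mu^\eps_0\|_\infty+z\sup_{0\le s\le z}\|\tilde U^\eps(z,s)S^\eps(s)\|_\infty$, and then Theorem \ref{thm:mupqphys} applied to each homogeneous piece. Your extra remarks on time-translation invariance of $\mathcal{L}^\eps_{p,q}$ and on justifying Duhamel via Picard iteration in the phase-compensated variables are sound elaborations of details the paper leaves implicit.
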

Indeed, as a direct application of the Duhamel principle, we have
\begin{equation*}
\mu^\eps(z)=\tilde{U}^\eps(z,0)\mu^\eps_0+\int_{0}^z\tilde{U}^\eps(z,s)S^\eps(s)\mathrm{d}s\,,
\end{equation*}
where $\tilde{U}^\eps(z,s)$ is the solution operator to the homogeneous problem with source placed at $z=s$. This is bounded as
$\|\mu^\eps(z)\|_\infty\le\|\tilde{U}^\eps(z,0)\mu^\eps_0\|_\infty+z\sup_{0\le s\le z}\|\tilde{U}^\eps(z,s)S^\eps(s)\|_\infty$ and
Theorem \ref{thm:mupqphys} then provides the necessary bound.

The uniform bound on the moments $\mu^\eps_{p,q}(z,X,Y)$ of $u^\eps(z,x)$ directly translate into uniform bounds on the moments $m^\eps(z,X,Y)$ of $\phi^\eps(z,r,x)$ given in \eqref{eq:phieps}. We thus obtain from Theorem \ref{thm:mupqphys} the following corollary on the $p+q$th moments of the random vector $\Phi^\eps$.
\begin{corollary}\label{coro:M_pq_moments}
For the random vector $\Phi^\eps$ with elements $\{\phi^\eps_j\}_{j=1}^N$,   define the tensor
    \begin{equation*}
        \mathbf{M}_{p,q}^\eps(z,r,X)=\mathbb{E}[\underbrace{\Phi^\eps\otimes\cdots\otimes\Phi^\eps}_{p \text{ terms}}\otimes\underbrace{{\Phi^{\eps\ast}}\otimes\cdots\otimes{\Phi^{\eps\ast}}}_{q \text{ terms}}](z,r,X)\,.
    \end{equation*}
    Then, the elements of $\mathbf{M}_{p,q}^\eps$ are given by
    \begin{equation*}
        \mathbb{E}[\prod_{j=1}^p\phi^\eps_{s_j}\prod_{l=1}^q{\phi^{\eps\ast}_{t_l}}]=\mathscr{F}\big(m^\eps_{1,0}(z,{x_{s_j}})_j,m^\eps_{0,1}(z,{x_{t_l}})_l,(m^\eps_{1,1}(z,{x_{s_j}},{x_{t_l}}))_{j,l}\big) + \mathcal{O}(c(p,q,z)\eps^{\frac13})\,,
    \end{equation*}
    where $s_j$ and $t_l$ are integers in $\{1,\cdots,N\}$. Moreover, the tensor of central moments
        \begin{equation*}
        \widetilde{\mathbf{M}}_{p,q}^\eps(z,r,X)=\mathbb{E}[\underbrace{\tilde{\Phi}^\eps\otimes\cdots\otimes\tilde{\Phi}^\eps}_{p \text{ terms}}\otimes\underbrace{{\tilde{\Phi}^{\eps\ast}}\otimes\cdots\otimes{\tilde{\Phi}^{\eps\ast}}}_{q \text{ terms}}](z,r,X)
    \end{equation*}
    has elements given by
    $\mathbb{E}[\prod_{j=1}^p\tilde{\phi}^\eps_{s_j}\prod_{l=1}^q{\tilde{\phi}^{\eps\ast}_{t_l}}]=\widetilde{\mathscr{F}}(\tilde{m}^\eps_{1,1}(z,{x_{s_j}},{x_{t_l}})_{j,l}) + \mathcal{O}(c(p,q,z)\eps^{\frac13})$.
\end{corollary}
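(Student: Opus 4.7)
The plan is to reduce Corollary \ref{coro:M_pq_moments} to a direct application of Theorem \ref{thm:mupqphys} together with the definition of the rescaled vector $\Phi^\eps$. First I would observe that by \eqref{eq:phieps}--\eqref{eq:Phieps}, each component $\phi^\eps_j = \phi^\eps(z,r,x_j) = u^\eps(z,\eps^{-\beta}r + \eta x_j)$. Hence for arbitrary indices $s_1,\ldots,s_p$ and $t_1,\ldots,t_q$ in $\{1,\ldots,N\}$,
\begin{equation*}
  \mathbb{E}\Big[\prod_{j=1}^p \phi^\eps_{s_j}\prod_{l=1}^q \phi^{\eps\ast}_{t_l}\Big] = \mu^\eps_{p,q}(z, X^r, Y^r),\qquad X^r_j = \eps^{-\beta}r+\eta x_{s_j},\ \ Y^r_l = \eps^{-\beta}r+\eta x_{t_l}.
\end{equation*}
Thus it suffices to evaluate the representation of $\mu^\eps_{p,q}$ provided by Theorem \ref{thm:mupqphys} at these points.

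To apply that theorem, I would check that $\|\hat{\mu}^\eps_{p,q}(0,\cdot)\|$ is bounded uniformly in $\eps$. Since $\hat{\mu}^\eps_{p,q}(0,v) = \prod_{j=1}^p \hat u^\eps_0(\xi_j)\prod_{l=1}^q \hat u^{\eps\ast}_0(\zeta_l)$ is a product measure in the dual variables, its total variation is the product of the total variations of its factors. By the discussion following \eqref{eq:u0}, both incident profiles \eqref{eq:u0plane} and \eqref{eq:u0} yield $\widehat{u_0^\eps}(\xi)d\xi$ with uniformly finite total variation in $\eps$, so $\|\hat{\mu}^\eps_{p,q}(0)\|\le C(p,q)$ uniformly in $\eps$. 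Theorem \ref{thm:mupqphys} then gives, uniformly in $(X,Y)\in\Rm^{(p+q)d}$,
\begin{equation*}
\mu_{p,q}^\eps(z,X,Y) = \mathscr{F}\big(\mu^\eps_{1,0}(z,x_1),\ldots,\mu^\eps_{0,1}(z,y_q),\mu^\eps_{1,1}(z,x_1,y_1),\ldots,\mu^\eps_{1,1}(z,x_p,y_q)\big) + \mathscr{E}^\eps_{p,q}(z,X,Y)
\end{equation*}
with $\|\mathscr{E}^\eps_{p,q}(z)\|_\infty \le c(p,q,z)\eps^{1/3}$. Evaluating the identity at $(X^r,Y^r)$ and using that $\mu^\eps_{1,0}(z,\eps^{-\beta}r+\eta x_{s_j}) = m^\eps_{1,0}(z,r,x_{s_j})$ and similarly for the mixed second moments immediately yields the first assertion of the corollary, with the error $\mathcal{O}(c(p,q,z)\eps^{1/3})$ uniform in $(r,X)$ since $\|\cdot\|_\infty$ is a supremum over all argument values.

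For the centered tensor $\widetilde{\mathbf{M}}^\eps_{p,q}$, I would proceed identically using the decomposition \eqref{eqn:centred_mu_pq_approx} in place of the $\mathscr{F}$ identity. Each entry of $\widetilde{\mathbf{M}}^\eps_{p,q}$ equals $\tilde\mu^\eps_{p,q}(z,X^r,Y^r)$ by the same shift argument, and Theorem \ref{thm:mupqphys} supplies
\begin{equation*}
  \tilde\mu^\eps_{p,q}(z,X,Y) = \widetilde{\mathscr{F}}\big(\tilde\mu^\eps_{1,1}(z,x_1,y_1),\ldots,\tilde\mu^\eps_{1,1}(z,x_p,y_q)\big) + \tilde{\mathscr{E}}^\eps_{p,q}(z,X,Y),
\end{equation*}
with $\sup_{0\le z'\le z}|\tilde{\mathscr{E}}^\eps_{p,q}(z',X,Y)| \le \|\hat\mu^\eps_{p,q}(0)\|\eps^{1/3}$, giving the stated bound after substitution $(X,Y)\mapsto(X^r,Y^r)$.

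There is no real obstacle here beyond the verification of the uniform total variation bound on $\hat{\mu}^\eps_{p,q}(0)$; the only mild subtlety is allowing the indices $s_j, t_l$ to repeat, but this only means some of the arguments $(x_{s_j})$ or $(x_{t_l})$ coincide, which is permitted since both Theorem \ref{thm:mupqphys} and the functional $\mathscr{F}$ are defined for arbitrary $(X,Y)\in\Rm^{(p+q)d}$ (including diagonal configurations) and the bound on $\mathscr{E}^\eps_{p,q}$ is in the uniform norm.
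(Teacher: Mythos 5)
Your proposal is correct and follows exactly the route the paper intends: the corollary is stated as a direct consequence of Theorem \ref{thm:mupqphys} obtained by writing each entry of $\mathbf{M}^\eps_{p,q}$ as $\mu^\eps_{p,q}$ evaluated at the shifted points $\eps^{-\beta}r+\eta x_{s_j}$, using the uniform total variation bound on the product initial data and the $\sL^\infty$ control of $\mathscr{E}^\eps_{p,q}$. Your additional remarks on the product structure of $\hat\mu^\eps_{p,q}(0)$ and on repeated indices are consistent with, and slightly more explicit than, the paper's (essentially omitted) argument.
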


%%%%
\section{Statistics in the scintillation regime}\label{sec:final}
%%%
We are now in a position to prove the main theorems stated in section \ref{sec:process}.
    \begin{proof}[Proof of Theorem \ref{thm:kinetic}.]
    As the second moments of the random variables $\{\tilde{\Phi}^\eps\}$ are uniformly bounded in $\eps$ thanks to Lemma \ref{lem:limmts}, the family of distributions $\{\tilde{\Phi}^\eps\}$ is tight \cite{billingsley2017probability}. From Corollary \ref{coro:M_pq_moments}, Lemma \ref{lem:limmts}, and the continuity of $\widetilde{\mathscr{F}}$, we deduce that the moments of $\tilde{\Phi}^\eps$ converge to those of $\tilde{\Phi}$ in all settings considered (kinetic and diffusive regime, incident profiles \eqref{eq:u0} and \eqref{eq:u0plane}, as well as $\beta=1$ and $\beta>1$).

The limiting vector-valued variable $\tilde{\Phi}$ has complex Gaussian distribution. As an application of a Carleman criterion \cite{billingsley2017probability}, such variables are characterized by their moments since the latter of order $p+q$ do not grow faster than $C^{p+q}e^{(p+q)\ln(p+q)}$. This shows that the limiting distribution $\tilde{\Phi}$ is uniquely characterized as circularly symmetric complex Gaussian and that the whole sequence $\tilde{\Phi}^\eps$ converges in distribution to $\tilde{\Phi}$ \cite{billingsley2017probability}.

The results of Remark \ref{rem:cv} are proved similarly. For an incident beam profile \eqref{eq:u0}, all moments $m^\eps_{p,q}$ in \eqref{eq:mupq} converge to their limit as $\eps\to0$. We may thus replace $\tilde{\Phi}^\eps$ by ${\Phi}^\eps$ and use the continuity of $\mathscr{F}$to obtain the results stated in the remark.
\end{proof}

\begin{proof}[Proof of Theorem \ref{thm:diffusive}]
        The proof is similar to that of Theorem \ref{thm:kinetic}. The mean value $\mathbb{E}[\Phi^\eps]$ drops to zero when $\eta=(\ln\ln(1/\eps))^{-1}\ll1$ from~\eqref{eqn:mu_1_limit} and we have that the limiting distribution of $\Phi^\eps$ is indeed a mean zero circularly symmetric Gaussian random vector.
    \end{proof}
The following results are a direct consequence of Theorem~\ref{thm:diffusive}.

\begin{proof}[Proof of Corollary \ref{cor:scint}]
From Theorem \ref{thm:diffusive} and noting that $\sum\limits_{\pi_p}=p!$, we have
\begin{equation*}
    \mathbb{E}[I^\eps(z,r,x)^p]=\mathbb{E}[|\phi^\eps(z,r,x)|^{2p}]\to p!M_{1,1}^p(z,r,x,x)\,.
\end{equation*}
This immediately gives that $I^\eps(z,r,x)$ converges to an exponentially distributed random variable $I(z,r)$ as stated in the Corollary. In particular, the scintillation index of the random vector $\phi^\eps$ given by
\begin{equation*}
    \sS^\eps(z,r,x)=\frac{\mathbb{E}[I^\eps(z,r,x)^2]-\mathbb{E}[I^\eps(z,r,x)]^2}{\mathbb{E}[I^\eps(z,r,x)]^2}\to \frac{\mathbb{E}[I(z,r)^2]-\mathbb{E}[I(z,r)]^2}{\mathbb{E}[I(z,r)]^2}:=\sS(z,r)\,.
\end{equation*}
Noting that $\mathbb{E}[I(z,r)^2]=2\mathbb{E}[I(z,r)]^2>0$ strictly positive for $z>0$ as a solution of a heat equation with non-negative sources, we have that $\sS(z,r)=1$ identically.
\end{proof}

    \begin{proof}[Proof of Corollary \ref{cor:selfaver}]

   {\color{black} We have that}
    \begin{equation*}
    \begin{aligned}
                \mathbb{E}[|I^{\eps}_D(z,r,x)|^2]&=\frac{1}{|D|^2}\int_{D\times D}\mathbb{E}[I^\eps(z,r+r_1,x)I^\eps(z,r+r_2,x)]\mathrm{d}r_1\mathrm{d}r_2\\
                &=\frac{1}{|D|^2}\int_{D\times D}\mathbb{E}[I^\eps(z,r,x+\eps^{-\beta}\eta^{-1}r_1)I^\eps(z,r,x+\eps^{-\beta}\eta^{-1}r_2)]\mathrm{d}r_1\mathrm{d}r_2{\color{black}\,.}
    \end{aligned}
    \end{equation*}
       
    {\color{black} Note that from Corollary~\ref{coro:M_pq_moments}, 
    \begin{equation*}
    \begin{aligned}
      &\mathbb{E}I^\eps(z,r,x+\eps^{-\beta}\eta^{-1}r_1)I^\eps(z,r,x+\eps^{-\beta}\eta^{-1}r_1)\\
      &=\mathbb{E}[|\phi^\eps(z,r,x+\eps^{-\beta}\eta^{-1}r_1)|^2|\phi^\eps(z,r,x+\eps^{-\beta}\eta^{-1}r_1)|^2]= m_{1,1}^\eps(z,r+r_1,x,x)m_{1,1}^\eps(z,r+r_2,x,x)\\
      &+m_{1,1}^\eps(z,r,x+\eps^{-\beta}\eta^{-1} r_1,x+\eps^{-\beta}\eta^{-1} r_2)m_{1,1}^\eps(z,r,x+\eps^{-\beta}\eta^{-1} r_2,x+\eps^{-\beta}\eta^{-1} r_1)+\mathcal{O}(\eps^{\frac13})\,,
    \end{aligned}
    \end{equation*}
    with the cross term $m_{1,1}^\eps(z,r,x+\eps^{-\beta}\eta^{-1} r_j,x+\eps^{-\beta}\eta^{-1} r_l)\to 0$ as $\eps^{-\beta}\eta^{-1}|r_j-r_l|\to\infty$ for $j\neq l$. This gives 
   \begin{equation*}
   \begin{aligned}
         \mathbb{E}[|I^{\eps}_D(z,r,x)|^2] \xrightarrow{\eps\to 0} &\lim_{\eps\to 0}\frac{1}{|D|^2}\int_{D\times D} 
 m_{1,1}^\eps(z,r+r_1,x,x)m_{1,1}^\eps(z,r+r_2,x,x) \mathrm{d}r_1\mathrm{d}r_2\\
 =&\lim_{\eps\to 0}\Big(\frac{1}{|D|}\int_{D} m_{1,1}^\eps(z,r+r_1,x,x)\mathrm{d}r_1\Big)^2.
   \end{aligned}
   \end{equation*}
   When $a_\eps=a$, this gives 
   \begin{equation*}
       \mathbb{E}[|I^{\eps}_D(z,r,x)|^2] \xrightarrow{\eps\to 0}\Big(\frac{1}{|D|}\int_{D} \mathbb{E}I(z,r+r_1)\mathrm{d}r_1\Big)^2.
   \end{equation*}
   Now suppose $a_\eps\to 0$. We show the proof only for sources of the form~\eqref{eq:u0} with $\beta=1$. The other cases can be treated in a similar manner. From~\eqref{eqn:meps11_f0},
   \begin{equation*}
       \begin{aligned}
           &\frac{1}{|D|}\int_{D} m_{1,1}^\eps(z,r+r_1,x,x)\mathrm{d}r_1\\
           &=\int_{\mathbb{R}^{2d}}\Big(\frac{1}{|D|}\int_{D}e^{ir_1\cdot(\xi-\zeta)}\mathrm{d}r_1\Big)e^{ir\cdot(\xi-\zeta)}e^{i\eta\eps x\cdot(\xi-\zeta)}e^{-\frac{iz\eta\eps}{2k_0}(|\xi|^2-|\zeta|^2)}\hat{\sff}(\xi)\hat{\sff}^\ast(\zeta)\mathcal{Q}(0,\eta(\xi-\zeta))\frac{\mathrm{d}\xi\mathrm{d}\zeta}{(2\pi)^{2d}}\\
           &=\int_{\mathbb{R}^{2d}}\Big(\prod_{j=1}^d \mathrm{sinc}\big(\frac{a_\eps(\xi_j-\zeta_j)}{2}\Big)e^{ir\cdot(\xi-\zeta)}e^{i\eta\eps x\cdot(\xi-\zeta)}e^{-\frac{iz\eta\eps}{2k_0}(|\xi|^2-|\zeta|^2)}\hat{\sff}(\xi)\hat{\sff}^\ast(\zeta)\mathcal{Q}(0,\eta(\xi-\zeta))\frac{\mathrm{d}\xi\mathrm{d}\zeta}{(2\pi)^{2d}}\,,
           \end{aligned}
   \end{equation*}
   where $\mathrm{sinc}(x)=\frac{\sin x}{x}$. Since for fixed $(\xi_j,\zeta_j)$, $\lim_{a_\eps\to 0}\mathrm{sinc}\big(\frac{a_\eps(\xi_j-\zeta_j)}{2}\Big)\to 1$, the proof is completed by noting that
   \begin{equation*}
       \frac{1}{|D|}\int_{D} m_{1,1}^\eps(z,r+r_1,x,x)\mathrm{d}r_1 \xrightarrow{\eps\to 0}\mathbb{E}I(z,r)\,.
   \end{equation*}
   }
\end{proof}
We now come to the proof of tightness and stochastic continuity of the process $x\mapsto\phi^\eps(z,r,x)$.
\begin{proof}[Proof of Theorem \ref{thm:tightness}.]
We break the derivation of the result into several steps.
\paragraph{Equation for $\phi^\eps$. }
For $r$ fixed, we introduce the dilation-translation operator
\[
   \phi^\eps(z,r,x) = u^\eps(z,\eps^{-\beta}r+\eta x) = ({\mathcal S}^\eps u^\eps) (z,x).
\]
The moments of $\phi^\eps$ and $u^\eps$ are then related by
\[
 m_{p,q}^\eps(z,r,X,Y) =: {\mathcal S}^\eps_{p,q} \mu_{p,q}^\eps(z,X,Y),
\]
which defines ${\mathcal S}^\eps_{p,q}$ implicitly.
We observe that in the Fourier domain,
\begin{equation}\label{eq:mum}
  \|\hat m_{p,q}^\eps(z,r,\cdot)\| = \|\hat \mu_{p,q}^\eps(z,\cdot)\|.
\end{equation}
Let us define
\begin{equation}\label{eq:scalingL}
  {\mathfrak L}^\eps_{p,q} = {\mathcal S}^\eps_{p,q}  {\mathcal L}^\eps_{p,q}  ({\mathcal S}^\eps_{p,q})^{-1}
\end{equation}
where ${\mathcal L}^\eps_{p,q}$ is the operator in \eqref{eq:mupqpde} defining the moments $\mu^\eps_{p,q}$.
As a consequence, we deduce from Corollary \ref{cor:duhamel} that the solution of 
\begin{equation}\label{eq:meps}
  (\partial_z - {\mathfrak L}^\eps_{n,n} )m^\eps = S^\eps,\quad m^\eps(0)=m_0
\end{equation}
for any $n\geq1$ admits a unique solution that satisfies
\begin{equation}\label{eq:bdmeps}
  \|m^\eps(z,\cdot)\|_\infty + \|\hat m^\eps(z,\cdot)\| \leq C (\|\hat m_0\| + \sup_{0\leq s\leq z} \|\hat S^\eps(s)\|)
\end{equation}
with a bound uniform in $\eps$ both in the kinetic and diffusive regimes.   This is simply recasting the results for $\mu^\eps_{p,q}$ in section \ref{sec:highermoments} to moments $m^\eps_{p,q}$ of the process $\phi^\eps$. 

\paragraph{Equations for finite differences.}
  
Let $h\in\Rm^d$ with $|h|\leq1$ and $n\geq1$. We denote throughout
\[
  \delta f(x) \equiv \delta_h f(x) := f(x+h) - f(x),\qquad f_h(x) := f(x+h).
\]  
Let $X=(x_1,\ldots,x_{2n}) \in \Rm^{2nd}$. To simplify notation, we denote $\varphi^\eps_j(z,x_j)=\phi^\eps(z,r,x_j)$ with $\epsilon_j=1$ for $1\leq j\leq n$ and $\varphi_j^\eps(z,x_j)={\phi^{\eps*}}(z,r,x_j)$ with $\epsilon_j=-1$ for $n+1\leq j\leq 2n$. 

From \eqref{eq:uSDEd} conjugated by ${\mathcal S}^\eps$, we deduce the equation (with $k_0$ set to $1$ to simplify notation)
\begin{equation}\label{eq:varphi}
  d\varphi^\eps_j = \big(\frac{i\epsilon_j}2\frac{\eta^3}{\eps} \Delta_{x_j} -\frac{R^\eps(0)}{8} \big)\varphi^\eps_j dz + \frac i2 \epsilon_j \varphi^\eps_j dB^\eps
\end{equation}
where we have defined (dropping the irrelevant shift by $\eps^{-\beta}r$ in $B^\eps$ since it will not appear in $R^\eps$)
\[ 
  B^\eps(x)= \frac 1\eta B(\eta x),\qquad R^\eps(x)=\frac{1}{\eta^{2}}R(\eta x).
\] 
With this notation we obtain for the finite differences that 
\begin{equation}\label{eq:deltavarphi}
 d\delta\varphi^\eps_j =  \big(\frac{i\epsilon_j}2 \frac{\eta^3}{\eps} \Delta_{x_j} -\frac{R^\eps(0)}{8} \big)  \delta\varphi^\eps_j dz + \frac {i\epsilon_j}2 (\delta\varphi^\eps_j dB^\eps + \varphi^\eps_{jh} d \delta B^\eps).
\end{equation}
It seems difficult to analyze $\E|\delta \varphi^\eps(z,x)|^{2n}$ directly. We can, however, see the latter object as a $2n-$ moment in $2n-$ variables  $X$. 

Let $\sigma$ be a permutation of the $2n$ variables in $X$. We define the products
\begin{eqnarray}\label{eqn:del_Psi}
       \delta_h^q\Psi^\eps_\sigma(z,X,h) := \prod_{j\in J_1} \varphi_j^\eps(z,\sigma(x_j)) \prod_{l\in J_2} \delta\varphi_l^\eps (z,\sigma(x_l)),
\end{eqnarray}
with $J_1=\{1,\ldots,p\}$ and $J_2=\{p+1,\ldots,p+q\}$ for $0\leq q\leq 2n$ and $p+q=2n$. In other words, $p$ terms in the above product are of the form $\phi^\eps$ or ${\phi^{\eps*}}$ while the remaining $q$ terms are of the form  $\delta \phi^\eps$ or $\delta {\phi^{\eps*}}$. %The values of the terms $\epsilon_j$ are assumed to be permuted accordingly. 
We observe that for $\sigma$ the identity, we have our primary object of interest
\[
   \delta_h^{2n}\Psi^\eps(z,(x,\ldots, x),h) =  |\delta \phi^\eps(z,r,x)|^{2n}.
\]
We will devise an equation for $\E\delta_h^{2n}\Psi^\eps(z,X,h)$, which, however, will involve source terms $\E\delta_h^q\Psi^\eps_\sigma(z,X,h)$. The latter terms then also satisfy similar equations with source terms involving similar terms with different values of $q$ and $\sigma$. After an inductive process, we will arrive at \eqref{eq:tightness}, at least in the kinetic regime $\eta(\eps)=1$. In the diffusive regime, we will obtain the $\eta-$dependent bound \eqref{eq:tighteta} and a further step will be necessary.

To simplify notation, we drop the dependence in $\sigma$, i.e., replace $\sigma(x_j) \to x_j$. We assume that the coefficients $\epsilon_j$ are permuted accordingly when looking for equations involving $\varphi^\eps_j(z,x_j)$. We next compute 
\begin{eqnarray} \nonumber
  d \delta_h^q\Psi^\eps &=& \dsum_{j\in J_1} d \varphi^\eps_j(z,x_j)  \prod_{j'\in \tilde J_1} \varphi_{j'}^\eps(z,x_{j'}) \prod_{l\in J_2} \delta\varphi_l^\eps (z,x_l) + \sum_{l\in J_2} d  \delta\varphi^\eps_l(z,x_l)  \prod_{j\in  J_1} \varphi_j^\eps(z,x_j) \prod_{l'\in \tilde J_2} \delta\varphi_{l'}^\eps (z,x_{l'})
  \\[0mm] \nonumber
  & + & \dfrac{1}2 \dsum_{j\neq j'\in J_1} d \varphi^\eps_j (z,x_j)d\varphi^\eps_{j'} (z,x_{j'}) \prod_{j''\in \tilde J_1} \varphi_{j''}^\eps(z,x_{j''}) \prod_{l\in J_2} \delta\varphi_l^\eps (z,x_l) \\[0mm]\nonumber 
  & + & \dfrac{1}2\dsum_{l\neq l'\in J_2} d \delta\varphi^\eps_l(z,x_l)d \delta\varphi^\eps_{l'}(z,x_{l'})   \prod_{j\in J_1} \varphi_j^\eps(z,x_j) \prod_{l''\in \tilde J_2} \delta\varphi_{l''}^\eps (z,x_{l''})\\
&+&  \dsum_{j\in J_1} \dsum_{l\in J_2} d\varphi^\eps_j(z,x_j)d\delta \varphi^\eps_l(z,x_l)   \prod_{j'\in \tilde J_1} \varphi_{j'}^\eps(z,x_{j'}) \prod_{l'\in \tilde J_2} \delta\varphi_{l'}^\eps (z,x_{l'}),
\label{eq:dPsi}
\end{eqnarray}
where $\tilde J_l$ involves the set deleting any index that already appeared to the left in any product of $p+q$ terms (and hence could be empty in which case the corresponding term vanishes). We have used here the It\^o calculus anticipating from \eqref{eq:varphi} and \eqref{eq:deltavarphi} that all the above terms possibly contribute.

It\^o calculus then provides an equation for the moments of $\delta_h^q\Psi^\eps$ as follows.  We first compute
\[\begin{array}{rcl}
  \E dB^\eps(x_j) dB^\eps(x_l)&=&R^\eps(x_l-x_j){\color{black}\mathrm{d}z} = \eta^{-2} R(\eta(x_l-x_j)){\color{black}\mathrm{d}z} \\[1mm] 
  \E dB^\eps(x_j) d\delta B^\eps(x_l) &=& \delta_{h}R^\eps_{lj}(X;h){\color{black}\mathrm{d}z}:= {\color{black}[}R^\eps(x_l+h-x_j)-R^\eps(x_l-x_j) {\color{black}]\mathrm{d}z}, \\[1mm]
  \E d\delta B^\eps(x_j) d\delta B^\eps(x_l) &=& \delta^2_h R^\eps_{jl}(X;h){\color{black}\mathrm{d}z} := {\color{black}[}2R^\eps(x_l-x_j) - R^\eps(x_l+h-x_j)-R^\eps(x_l-x_j-h){\color{black}]\mathrm{d}z}.
\end{array}\]
All terms in \eqref{eq:dPsi} of the form $d\varphi^\eps_j$, $d\delta\varphi^\eps_j$, $d\varphi^\eps_jd\varphi^\eps_l$ contribute terms that are the same as in the derivation of the operator ${\mathcal L}^\eps_{n,n}$ in \eqref{eq:mupqpde}.  The remaining terms produce the following equation with source terms:
\begin{equation}\label{eq:deltameps}\begin{array}{rcl}
   (\partial_z- {\mathfrak L}^\eps_{n,n}) \delta_h^q m^\eps &=& \dfrac{-1}8 \dsum_{l\neq l'\in J_2}  \epsilon_l\epsilon_{l'} \delta^2_h R^\eps_{ll'} \   \E \varphi^\eps_{lh}(z,x_l)\varphi^\eps_{l'h}(z,x_{l'}) \prod_{j\in  J_1}\varphi_j^\eps(z,x_j) \prod_{l''\in \tilde J_2}  \delta\varphi_{l''}^\eps (z,x_{l''})\\
&& -\dfrac{1}4  \dsum_{j\in J_1} \dsum_{l\in J_2}   \epsilon_j\epsilon_l \delta_h R^\eps_{lj} \   \E   \varphi^\eps_{j}(z,x_j)\varphi^\eps_{lh}(z,x_l) \prod_{j'\in \tilde J_1} \varphi_{j'}^\eps(z,x_{j'}) \prod_{l'\in \tilde J_2} \delta\varphi_{l'}^\eps (z,x_{l'})\\
&& -\dfrac{1}4  \dsum_{l\neq l'\in J_2}  \epsilon_l\epsilon_{l'} \delta_h R^\eps_{ll'} \   \E   \varphi^\eps_{lh}(z,x_l) \delta\varphi^\eps_{l'}(z,x_{l'}) \prod_{j\in  J_1} \varphi_{j}^\eps(z,x_{j}) \prod_{l''\in \tilde J_2} \delta\varphi_{l''}^\eps (z,x_{l''}),
\end{array}\end{equation}
where we have defined $\delta_h^q m^\eps = \E \delta_h^q\Psi^\eps$.

This is an equation for $\delta_h^qm^\eps$ involving a finite sum of source terms, which up to a permutation of the variables $X$ and a shift of some of them by $h$ that do not change the TV norm in the Fourier variables, have either the form
\[
   \delta_h R^\eps_{12}  \delta_h^{q-1}m^\eps,\quad \delta_h^{q-1}m^\eps = \E \prod_{j\in \tilde J_1}\varphi_j^\eps(z,x_j) \prod_{l\in \tilde J_2}  \delta\varphi_l^\eps (z,x_l),
\]
or the form
\[
  \delta^2_h R^\eps_{12}  \delta_h^{q-2}m^\eps,\quad \delta_h^{q-2}m^\eps = \E \prod_{j\in \check J_1}\varphi_j^\eps(z,x_j) \prod_{l\in \check J_2}  \delta\varphi_l^\eps (z,x_l).
\]
In the above expressions, $|\check J_2|=|J_2|-2$ (unless $|J_2|\leq1$ and this term does not appear) while $|\check J_1|=|J_1|+2$. Similarly, $|\tilde J_2|=|J_2|-1$ (unless $|J_2|=1$ and this term does not appear) while $|\tilde J_1|=|J_1|+1$.

\paragraph{Control of source terms.}   Each one of the source terms in \eqref{eq:deltameps} is of the form $\delta^j_h R^\eps(x-y)$ for $j=1,2$ times a function of the form $\varphi(x,y,X')$ with $X'$ a set of variables in $\Rm^{2(n-1)d}$. These products are controlled thanks to the following lemma.
Define
\begin{equation}\label{eq:deltahR}\begin{array}{rcl}
  \delta_h R^\eps(\tau;h) &=& \dfrac1{\eta^2} ( R(\eta(\tau+h)) - R(\eta \tau))
\\
  \delta^2_h R^\eps(\tau;h) &=& \dfrac1{\eta^2} ( 2R(\eta\tau) - R(\eta(\tau+h)) - R(\eta (\tau-h))).
\end{array}\end{equation}
We consider the operators which to $\varphi(x,y)$ associate $\delta^j_h R^\eps(x-y;h)\varphi(x,y)$ for $j=1,2$, and  show that they are bounded in the TV sense in dual variables. 
%%%%
\begin{lemma}\label{lem:bddhR}
%%%
  Let $\psi_1(x,y)=\delta_h R^\eps(x-y;h)\varphi(x,y)$ and $\psi_2(x,y)=\delta_h^2 R^\eps(x-y;h)\varphi(x,y)$.
  
  Assume that for $\alpha\in (0,1]$, $\aver{k}^{2\alpha} \hat R(k) \in \sL^1(\Rm^d)$. Then 
  \[
     \|\hat \psi_1\|  \leq C \frac{|h|^\alpha}{\eta^{2-\alpha}} \|\hat\varphi\|,\qquad \|\hat\psi_2\|  \leq C \frac{|h|^{2\alpha}}{\eta^{2-2\alpha}} \|\hat\varphi\|,
  \]
  with $C$ uniform in $\eps\in(0,1]$ and $h\in(0,1]$.
\end{lemma}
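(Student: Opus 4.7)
\medskip

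\noindent\textbf{Proof proposal.} The plan is to reduce the estimate to a one-dimensional Fourier calculation on the scalar difference kernels $\delta_h^j R^\eps(\cdot;h)$. Since these depend only on $x-y$, multiplication by $\varphi(x,y)$ becomes a partial convolution in dual variables. With the symmetrized convention $\hat f(\xi,\zeta)=\int f(x,y) e^{-i(\xi\cdot x-\zeta\cdot y)}dxdy$ used throughout the paper, a direct computation should give
\begin{equation*}
  \hat\psi_j(\xi,\zeta) = \frac{1}{(2\pi)^d}\int_{\Rm^d} \widehat{\delta_h^j R^\eps}(k;h)\, \hat\varphi(\xi-k,\zeta-k)\, dk, \qquad j=1,2,
\end{equation*}
(the same shift in both variables because $\delta_h^j R^\eps$ depends on $x-y$). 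From the standard bound on convolution of a bounded measure against an $\sL^1$ function, this yields
\begin{equation*}
    \|\hat\psi_j\| \;\leq\; C\,\|\widehat{\delta_h^j R^\eps(\cdot;h)}\|_{\sL^1(\Rm^d)}\,\|\hat\varphi\|,
\end{equation*}
so the whole problem reduces to controlling these two scalar $\sL^1$ norms.

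Next I compute the Fourier transforms of the differences explicitly:
\begin{equation*}
  \widehat{\delta_h R^\eps(\cdot;h)}(k) = (e^{ik\cdot h}-1)\,\hat R^\eps(k),\qquad \widehat{\delta_h^2 R^\eps(\cdot;h)}(k) = 2(1-\cos(k\cdot h))\,\hat R^\eps(k),
\end{equation*}
and use the rescaling $\hat R^\eps(k)=\eta^{-2-d}\hat R(k/\eta)$ coming from $R^\eps(x)=\eta^{-2}R(\eta x)$. After a change of variables $k=\eta k'$, both $\sL^1$ norms collapse to $\eta^{-2}\int |\hat R(k')|\Phi(\eta k'\cdot h)\,dk'$ with $\Phi(t)=|e^{it}-1|$ or $\Phi(t)=|1-\cos t|$ respectively.

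I will then invoke the elementary interpolation inequalities
\begin{equation*}
    |e^{it}-1|\le 2|t|^{\alpha},\qquad |1-\cos t|\le 2|t|^{2\alpha},\qquad \alpha\in[0,1],\ t\in\Rm,
\end{equation*}
each of which is immediate by splitting $|t|\le 1$ and $|t|\ge 1$. Applied with $t=\eta k'\cdot h$ and $|h|\le 1$, $\eta\le 1$, this produces factors $(\eta|h|)^\alpha|k'|^\alpha$ and $(\eta|h|)^{2\alpha}|k'|^{2\alpha}$ in the integrand. The remaining integrals $\int |k'|^\alpha|\hat R(k')|\,dk'$ and $\int |k'|^{2\alpha}|\hat R(k')|\,dk'$ are both finite under the hypothesis $\aver{k}^{2\alpha}\hat R(k)\in \sL^1(\Rm^d)$ (using $|k'|^\beta\le \aver{k'}^\beta$ for $\beta=\alpha,2\alpha$). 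Combining with the prefactor $\eta^{-2}$ gives the claimed powers $\eta^{-2+\alpha}|h|^\alpha$ and $\eta^{-2+2\alpha}|h|^{2\alpha}$, with constants uniform in $\eps,h\in(0,1]$.

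There is no essential obstacle here: the proof is a routine Fourier calculation. The only places that require care are the bookkeeping of the powers of $\eta$ under the scaling $R\mapsto R^\eps$, and the sign convention of the Fourier transform inherited from the definition of $\hat\mu^\eps_{p,q}$, so that the convolution identity above really features the same shift $k$ in both arguments of $\hat\varphi$ and so that the total-variation estimate on the convolution is directly applicable.
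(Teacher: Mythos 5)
Your proposal is correct and follows essentially the same route as the paper: the same convolution identity in the symmetrized dual variables, the same total-variation-against-$\sL^1$ bound, the same rescaling of $\hat R^\eps$, and the same interpolation inequalities $|e^{it}-1|\le C|t|^\alpha$, $|1-\cos t|\le C|t|^{2\alpha}$. No gaps.
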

%%%%%%
%%%%%%%
\begin{proof}
  We first observe that 
  \[
     |e^{ih\cdot k} -1| \leq 2 \wedge C |h| |k|\quad \mbox{ and hence } \quad  |e^{ih\cdot k} -1| \leq C |h|^{\alpha} |k|^{\alpha},
  \]
 % as well as
  \[
     |e^{ih\cdot k} + e^{-ih\cdot k} -2| \leq 4 \wedge C|h|^2 |k|^2\quad \mbox{ and hence } \quad  |e^{ih\cdot k} + e^{-ih\cdot k} -2|  \leq C |h|^{2\alpha} |k|^{2\alpha}.
  \]
  We compute in the Fourier domain
  \[
    \hat \psi_j(\xi,\zeta) = \dint_{\Rm^d}  \widehat{\delta_h^jR^\eps}(k) \hat\varphi(\xi-k,\zeta-k) \dfrac{dk}{(2\pi)^d},
  \]
  for $j=1,2$, so that as already used earlier,
  $ \|\hat \psi_j \| \leq C \|\widehat{\delta_h^jR^\eps}\|_{1} \|\hat \varphi\|$. Now
  \[
     |\widehat{\delta_hR^\eps}(k) |=| e^{ih\cdot k}-1| \eta^{-2} \eta^{-d} \hat R(\eta^{-1}k) \leq C \eta^{-2+\alpha} |h|^\alpha \eta^{-d}  |\eta^{-1} k|^{\alpha} \hat R(\eta^{-1}k).
  \]
  When $\eta=1$, we deduce that the above is integrable and independent of $\eps$ with bound of order $|h|^\alpha$. When $\eta\to0$, the bound cannot be independent of $\eta$ even if we choose $\alpha=1$. 
  Similarly,
  \[
     |\widehat{\delta_h^2R^\eps}(k)| =|2-e^{ih\cdot k}-e^{-ih\cdot k}| \eta^{-2} \eta^{-d} \hat R(\eta^{-1}k) \leq C \eta^{-2+2\alpha} |h|^{2\alpha} \eta^{-d}  |\eta^{-1}k|^{2\alpha} \hat R(\eta^{-1}k).
  \]
  This yields the result after integration in $k\in\Rm^d$.
\end{proof}
\paragraph{Iterative stability.} Combining Lemma \ref{lem:bddhR} and the stability result \eqref{eq:bdmeps} for solutions of \eqref{eq:meps} with the equation \eqref{eq:deltameps}, we deduce that for $q\geq1$ and $\delta^q_h m^\eps_\sigma := \E \delta^q_h\Psi^\eps_\sigma$, then
\begin{equation}\label{eq:stabmts}
 \sup_{\sigma,\,0\leq s\leq z} \| \widehat{\delta^q_h m ^\eps_\sigma}(s)\| \leq C(z)   \sup_\sigma 
 \Big(  \frac{(\eta|h|)^\alpha}{\eta^2} \sup_{s\in[0,z]} \| \widehat{\delta_h^{q-1} m_\sigma^\eps}(s)\| + \frac{(\eta|h|)^{2\alpha}}{\eta^2} %%|h|^{2\alpha} 
 \sup_{s\in[0,z]} \| \widehat{\delta_h^{q-2} m_\sigma^\eps}(s)\| +  \| \widehat{\delta^q_h\mu^\eps_\sigma}(0)\| \Big).
\end{equation}
Iterating over $q$ from $2n$ to $0$ gives the worst-case estimate:
\begin{equation}\label{eq:stabmtsn}
 \sup_{\sigma,\,0\leq s\leq z} \|  \widehat{\delta^{2n}_h m^\eps_\sigma}(s)\| \leq \frac{C(z)}{\eta^{2n(2-\alpha)}} \Big(   \sup_\sigma
\dsum_{q=0}^{2n} |h|^{(2n-q)\alpha}  \| \widehat{\delta_h^q m_\sigma^\eps}(0)\| + |h|^{2n\alpha}  \sup_{s\in[0,z] }\|\hat m^\eps(s)\|\Big). 
\end{equation}
We deduce from Theorems \ref{thm:kinetic} and \ref{thm:diffusive} and \eqref{eq:mum} that $\sup_{s\in[0,z] }\|\hat m^\eps(s)\|\leq C\|\hat u_0^\eps\|^{2n}$ is bounded uniformly in $\eps$.
%\tb{Proofs of IC modified:}
It remains to analyze the incident beam moment $\| \widehat{\delta_h^q m_\sigma^\eps}(0)\|$. Consider one of the terms in the incident beam $\phi^\eps(0,r,x)$ given by $\phi^\eps_\sm(x):=\sff_\sm(r+\theta x)e^{i \sk_\sm\cdot (\eps^{-\beta}r+\eta x)}$ where we use the shorthand $\theta=\eta\eps^\beta$ to simplify. We find
\[ \hat \phi^\eps_\sm(k) = e^{i{k}\cdot r \theta^{-1}} \theta^{-d} \hat \sff_\sm(\theta^{-1}(k-\eta \sk_\sm)),\]
so that 
\[ \widehat{\delta_h \phi^\eps_\sm}(k) = (e^{ik\cdot h}-1) e^{i{k}\cdot r \theta^{-1}} \theta^{-d} \hat \sff_\sm(\theta^{-1}(k-\eta \sk_\sm)).\]
For $\theta\leq1$ and $\eta \sk_\sm$ uniformly bounded, we find that 
\[ \aver{k}^{2\alpha} \leq C \aver{k-\eta \sk_\sm}^{2\alpha} \leq  C\aver{\theta^{-1}(k-\eta \sk_\sm)}^{2\alpha}\]
so that, using that $|e^{ik\cdot h}-1|\aver{k}^{-{2\alpha}}\leq C|h|^\alpha$,
\[ | \widehat{\delta_h \phi^\eps_\sm}(k)| \leq C |h|^{\alpha} \theta^{-d} \aver{\theta^{-1}(k-\eta \sk_\sm)}^{2\alpha} |\hat \sff_\sm(\theta^{-1}(k-\eta \sk_\sm))|. \]
By assumption on the incident beam that $\| \aver{k}^{2\alpha}\hat \sff_\sm \|\leq C_\alpha$, we deduce that 
\[
\| \widehat{\delta_h \phi^\eps_\sm} \| \leq C C_\alpha |h|^{\alpha}.
\]
Since $\widehat{\delta_h^q m_\sigma^\eps}(0)$ involves a finite sum of products including $q$ terms of the form $\widehat{\delta_h \phi^\eps_\sm}$ as above, we therefore obtain that
\[
   \sup_\sigma  \| \widehat{\delta_h^q m_\sigma^\eps}(0)\| \leq C |h|^{q\alpha} . % \|\aver{k}^{2\alpha}\hat u_0(k)\|^q.
\]
These estimate combined with \eqref{eq:stabmtsn} show that 
\begin{equation}\label{eq:tighteta}
 \sup_{s\in[0,z]} \| \delta^{2n}_h m^\eps(s)\|_\infty \leq \sup_{s\in[0,z]} \| \widehat{\delta^{2n}_h m^\eps}(s)\|  \leq C \frac{|h|^{2n\alpha}}{\eta^{2n(2-\alpha)}}.
\end{equation}
When $\eta=1$ in the kinetic regime, the proof of the theorem is complete since \eqref{eq:tighteta} is exactly \eqref{eq:tightness} in that case as we may choose $n$ such that $2n\alpha\geq d+2\alpha_- n$, ensuring stochastic continuity and tightness of $\phi^\eps$ as a field with values in $C^{0,\alpha_-}(\Rm^d)$ \cite{kunita1997stochastic}.

\paragraph{Approximate Gaussianity for sufficiently large $h$.} When $\eta\to0$, the loss of a term $\eta$ in the analysis of $\delta_hR^\eps$ implies that the bound is useful only for $|h|$ sufficiently small compared to $\eta$. {\color{black} So a proof as in the previous section leading to~\eqref{eq:tighteta} will not be sufficient to show tightness when $\eta\ll 1$. However, heuristically the asymptotic limit is Gaussian, which is smooth in $x$.} To complete the proof of tightness in the diffusive regime, we now use the almost complex Gaussian statistics of $\phi^\eps$ to estimate \eqref{eq:tightness} when $h$ is sufficiently large. 

Indeed, for $(r,s)$ fixed, we proved in Theorem \ref{thm:mupqphys} that $\mu^\eps_{p,q}$ uniformly, or equivalently $m^\eps_{p,q}$ uniformly, was given by a functional of its first and second moments up to a term bounded by $C\eps^{\frac13}$. Since \eqref{eq:tightness} is composed of a sum with $2^{2n}$ terms of products of moments of the form $m^\eps_{n,n}$, we deduce from complex Gaussian statistics \cite{reed1962moment} that 
\[ \E |\phi^\eps(s,r,x+h) - \phi^\eps(s,r,x)|^{2n} = n! \Big( \E |\phi^\eps(s,r,x+h) - \phi^\eps(s,r,x)|^{2}\Big)^n + O(\eps^{\frac13}).\]
In the diffusive regime,  we have
\[ \E |\phi^\eps(z,r,x+h) - \phi^\eps(z,r,x)|^{2} = m^\eps_{1,1}(z,r,x+h,x+h) + m^\eps_{1,1}(z,r,x,x) - m^\eps_{1,1}(z,r,x,x+h)-m^\eps_{1,1}(z,r,x+h,x). \]

We start from the simpler setting of an incident beam of the form \eqref{eq:u0}, where
\begin{align*}
    m^\eps_{1,1}(z,r,x,x+h)= \int_{\mathbb{R}^{2d}} & \hat\sff(\xi)\hat\sff^\ast(\zeta)e^{ir\cdot(\xi-\zeta)}e^{i\eps^\beta\eta x\cdot(\xi-\zeta)}e^{-i\eps^{\beta}\eta h\cdot\zeta}e^{-\frac{i\eta z}{2k_0}\eps^{2\beta-1}(|\xi|^2-|\zeta|^2)}
    \\ & \times e^{\frac{k_0^2z}{4\eta^2}\int\limits_{0}^1Q\big(\eta h +\eta\eps^{\beta-1}\frac{(\xi-\zeta)sz}{k_0}\big)\mathrm{d}s}\frac{\mathrm{d}\xi\mathrm{d}\zeta}{(2\pi)^{2d}}.
\end{align*}
We treat the harder case $\beta=1$ to simplify notation and leave the case $\beta>1$ to the reader.  We write
\begin{equation}\label{eq:decphase}
    e^{-i\eps^{\beta}\eta h\cdot\zeta} = 1 \ + \ (e^{-i\eps^{\beta}\eta h\cdot\zeta}-1)
\end{equation}
with the latter term bounded by $\eps \eta |h||\zeta|$, which provides a negligible contribution to the above integral over $\Rm^{2d}$ assuming $\aver{\xi}\hat\sff(\xi)$ integrable. A similar argument applies to $m^\eps_{1,1}(z,r,x+h,x)$ and $m_{1,1}^\eps(z,r,x+h,x+h)$ as well. The exponential factor involving $|\xi|^2-|\zeta|^2$ is independent of $h$ and of modulus (bounded by) $1$. Define $\sC=\frac{k_0^2 z}{4}$ and $\sth=\frac{(\xi-\zeta)z}{k_0}$.
A bound on $\E|\delta\phi^\eps|^2$ is thus obtained by bounding the term
\[
  T := 2e^{\frac{\sC}{\eta^2} \int_0^1 Q(\eta\sth s) ds} - e^{\frac{\sC}{\eta^2} \int_0^1 Q(\eta(\sth s+h)) ds} - e^{\frac{\sC}{\eta^2} \int_0^1 Q(\eta(\sth s-h)) ds}.
\]
In the diffusive regime, we assume that $\aver{\xi}^2 \hat R(\xi)\in \sL^1(\Rm^d)$, which implies that $\nabla^2 Q(x) = \nabla^2 R(x)$ is uniformly bounded. We also recall that $\nabla Q(0)=0$ since $R(x)$ is maximal at $x=0$. By assumption, we have
\[Q(\eta \sth s+\eta h) = Q(\eta\sth s) + \eta h\cdot\nabla Q(\eta \sth s) + \frac12 \eta^2 (h \cdot\nabla)^2 Q(\eta \sth s +\tau \eta h)\]
for some $0\leq \tau\leq1$. By uniform bound on $\nabla^2 Q(x)$, we may approximate $e^{\frac C2 (h\cdot\nabla)^2 Q(\eta \sth s +\tau \eta h)}$ by $1$ up to an error of order $O(|h|^2)$ and thus obtain (uniformly in $\sth$)
\[T = e^{\frac \sC{\eta^2}\int_0^1 Q(\eta \sth s)ds} (2- e^{\frac \sC{\eta} h\cdot\int_0^1 \nabla Q(\eta \sth s)ds} -e^{-\frac \sC{\eta} h\cdot\int_0^1 \nabla Q(\eta\sth s)ds}) + O(|h|^2).\]

We next use that for $a\in\Rm$, then $|2-e^a-e^{-a}|\leq Ca^2 e^{|a|}$ and since $Q(x)\leq0$ that we have $Q(\eta\theta s) \pm \eta h\cdot\nabla Q(\eta \sth s)\leq O(\eta^2|h|^2)$, again uniformly in $\sth$. This shows that 
\[ |T| \leq C \Big| \frac{1}{\eta} h\cdot \int_0^1 \nabla Q(\eta\sth s)\Big|^2 + C |h|^2.\]
Since $\nabla Q$ is globally Lipschitz and $\nabla Q(0)=0$, we deduce from $|\nabla Q(\eta\sth s)|\leq C\eta |\sth|s$ that 
\[ |T| \leq C |h|^2 (1+  |\xi|^2+|\zeta|^2).\]
Upon integrating the above inequality in $(\xi,\zeta)\in\Rm^{2d}$, we obtain the bound:
\begin{equation}\label{eq:bdsquareu0}
    \E |\phi^\eps(x+h) - \phi^\eps(x)|^{2} \leq C( |h|^2 + |h|\eps).
\end{equation}

The incident condition \eqref{eq:u0plane} is treated similarly with the following extensions. We recall that $m^\eps_{1,1}(z,r,x,y)$ is given in \eqref{eqn:m_11_plane_wave}-\eqref{eqn:I_mn} where we have for $\beta=1$:
     \[ \begin{aligned}
    \mathcal{I}_{\sm,\sn}^\eps(z,r,x,y)
    = & \int_{\mathbb{R}^{2d}}
    \hat{\sff}_\sm(\xi)\, \hat{\sff}_{\sn}^*(\zeta)
    e^{\frac{-iz\eta}{2k_0\eps}(|\eps\xi+\sk_\sm|^2-|\eps\zeta+\sk_{\sn}|^2)}  e^{ir\cdot(\xi-\zeta)} e^{i\frac{r}{\eps}\cdot(\sk_\sm-\sk_\sn)} e^{i\eps\eta (\xi\cdot x-\zeta\cdot y)}
    \\ &\quad \times  
    e^{i \eta (\sk_\sm \cdot x- \sk_\sn\cdot y)}
    \mQ\big(\eta(y-x), \eta(\xi-\zeta) + \frac \eta \eps (\sk_\sm-\sk_\sn)\big)
    \frac{\mathrm{d}\xi \mathrm{d}\zeta}{(2\pi)^{2d}}.
    \end{aligned}\]
   
    Start with the case $\sm=\sn$. Up to a negligible term as in \eqref{eq:decphase}, we may replace $e^{i\eps\eta (\xi\cdot x-\zeta\cdot y)}$ by $1$. The quadratic terms still do not depend on $h$.  Then, the above term $T$ is being replaced by
    \[
    T_\sm = 2e^{\frac{\sC}{\eta^2} \int_0^1 Q(\eta\sth s) ds} - e^{\frac{\sC}{\eta^2} \int_0^1 Q(\eta(\sth s+h)) ds} e^{-i\eta \sk_\sm\cdot h} - e^{\frac{\sC}{\eta^2} \int_0^1 Q(\eta(\sth s-h)) ds}e^{i\eta \sk_\sm\cdot h}.
    \]
    We have the approximation
    \[T_\sm = e^{\frac \sC{\eta^2}\int_0^1 Q(\eta \sth s)ds} (2- e^{\frac \sC{\eta} h\cdot\int_0^1 \nabla Q(\eta \sth s)ds} e^{-i\eta \sk_\sm\cdot h}-e^{-\frac \sC{\eta} h\cdot\int_0^1 \nabla Q(\eta\sth s)ds}e^{i\eta \sk_\sm\cdot h}) + O(|h|^2).\]
    The linear terms in $h$ still cancel out so that $|T_\sm| \leq C |h|^2 (1+|\xi|^2+|\zeta|^2)$ as for $T$. 

    It remains to consider $\sm\not=\sn$ with terms of the form
    \[\begin{aligned}
    \mathcal{I}_{\sm,\sn}^\eps(z,r,x,y)
    = & \int_{\mathbb{R}^{2d}}
    \hat{\sff}_\sm(\xi)\, \hat{\sff}_{\sn}^*(\zeta)
    \ \breve\Phi\ 
    \mQ\big(\eta(y-x), \eta(\xi-\zeta) + \frac \eta \eps (\sk_\sm-\sk_\sn)\big)
    \frac{\mathrm{d}\xi \mathrm{d}\zeta}{(2\pi)^{2d}},
    \end{aligned}\]
    where $\breve\Phi\in\C$ satisfies $|\breve\Phi|=1$.

    Consider the domain $D$ 
    where $|\xi|+|\zeta|\leq \eps^{-\frac12}|\sk_\sm-\sk_\sn|$ and its complementary $\Rm^{2d}-D$. Since $|\mQ|\leq1$ and $|\xi|\hat{\sff}_\sm(\xi)\in \sL^1(\Rm^d)$, the contribution to $\mathcal{I}_{\sm,\sn}^\eps$ on $\Rm^{2d}-D$ is of order $\eps^{\frac12} |\sk_\sm-\sk_\sn|^{-1}$ using that $1\leq \eps^{\frac12} |\sk_\sm-\sk_\sn|^{-1} (|\xi|+|\zeta|)$ on $\Rm^{2d}-D$.

    On $D$, we have $|\eta(\xi-\zeta) + \frac \eta \eps (\sk_\sm-\sk_\sn)|\geq C \eta \eps^{-\frac12}$ for $C>0$.
    We thus observe from 
    \[ \mQ(\tau_1,\tau_2)=e^{-\frac{k_0^2 z}{4\eta^2} R(0)} e^{\frac{k_0^2 z}{4\eta^2} \int_0^1 R(\tau_1+\frac{zs}{k_0} \tau_2) ds} =e^{-\frac{k_0^2 z}{4\eta^2} R(0)} e^{\frac{k_0^3}{4\eta^2|\tau_2|} \int_0^{\frac{|\tau_2|z}{k_0}} R(\tau_1+\hat\tau_2 s) ds} \]
    for $|\tau_2|\geq C_1\eta\eps^{-\frac12}\geq C \eps^{-\frac13}$  for $C>0$ and from the line integrability of $R(x)$ that 
    \[ \mQ\big(\eta(y-x), \eta(\xi-\zeta) + \frac \eta \eps (\sk_\sm-\sk_\sn)\big) = e^{-\frac{k_0 z}{4\eta^2} R(0)} + O_{\|\cdot\|_\infty}(\eps^{\frac13}).\]
    Up to a term of order $\eps^{\frac13}$, we may thus replace $\mQ$ by a constant term in the analysis of a term $T_{\sm,\sn}$ generalizing the term $T$ above. The only remaining dependence in $h$ is a phase term treated as \eqref{eq:decphase}. The contribution to $\E |\phi^\eps(z,r,x+h) - \phi^\eps(z,r,x)|^{2}$ stemming from the cross-terms $\sk_\sm\not=\sk_\sn$ is therefore bounded by $C\eps^{\frac13}$.

Summarizing the above, we obtain for the incident beam in \eqref{eq:u0plane} an estimate of the form
\begin{equation}\label{eq:bdsquareu0plane}
    \E |\phi^\eps(z,r,x+h) - \phi^\eps(z,r,x)|^{2} \leq C( |h|^2 + |h|\eps + \eps^{\frac13}).
\end{equation}

We deduce from the above and \eqref{eq:tighteta} with $\alpha=1$ (since we require that both $\aver{\xi}^2\hat \sff_\sm$ and $\aver{\xi}^2\hat R$ be integrable), that for an incident beam of the form \eqref{eq:u0plane} or \eqref{eq:u0}, we have
\[ \E |\phi^\eps(s,r,x+h) - \phi^\eps(s,r,x)|^{2n} \leq C_n \Big(\big[(|h|^{2}+|h|\eps + \eps^{\frac13})^n + \eps^{\frac13}\big] \wedge \frac{|h|^{2n}}{\eta^{2n}}\Big).\]
It remains to choose $h_0=\eps^{\frac{1}{6n}}$, use the first estimate for $|h|\geq h_0$ and the second estimate for $|h|\leq h_0$ to observe that since $\eta^{-1}=\ln\ln\eps^{-1}$,
\[
\E |\phi^\eps(s,r,x+h) - \phi^\eps(s,r,x)|^{2n} \leq C_g |h|^{2n-g}
\]
for any $g>0$ with $C_g$ independent of $h\in(0,1]$ and $\eps$ sufficiently small.

This concludes the derivation of \eqref{eq:tightness} and the proof of the theorem for the process $\phi^\eps$.
The deterministic function $\E\phi^\eps(z,r,x)$ is also clearly uniformly bounded and appropriately H\"older continuous (since the initial conditions are as we just saw and evolution consists of a simple multiplication by a $z-$dependent term) so that the continuity and tightness properties we obtained for $\phi^\eps$ extend to the centered process $\phi^\eps-\E\phi^\eps$.
This concludes the proof of the Theorem.
\end{proof}
%
%
%%%%%%%%%%
\paragraph{Comparison with existing work.} The authors in \cite{gu2021gaussian} analyze the kinetic regime of the scintillation scaling for the same It\^o-Schr\"odinger paraxial model as in this work. They analyze a compensated wave field in the {\em Fourier} variables, which in the kinetic regime takes the form $(z,\zeta)\to\hat u^\eps(z,\xi+\eps\zeta)e^{\frac{i}{\eps k_0}z|\xi+\eps\zeta|^2}$. Using the martingale structure of the random wave process and a reasonable amount of diagrammatic expansions, they show that this process is tight on continuous functions $(z,\zeta)\in\Rm_+\times\Rm^d$ and converges in distribution as $\eps\to0$ to a limit $\hat\phi$ such that $\hat\phi-\E\hat\phi$ is complex Gaussian. The main difference with our current work is in the regime of incident beams, which in \cite{gu2021gaussian} takes the form of \eqref{eq:u0} with $\beta=0$. Such narrow incident beams experience significantly more dispersion than for larger values of $\beta$. It is not clear whether complex Gaussian structures in the {\em physical} variables as we displayed in Theorems \ref{thm:kinetic} and \ref{thm:diffusive} may be obtained for narrow beams corresponding to small values of $\beta$. The information they obtain on $\hat u^\eps(z,\xi+\eps\zeta)e^{\frac{i}{\eps k_0}z|\xi+\eps\zeta|^2}$ is, however, sufficient to infer statistical stability of mesoscopic kinetic quantities defined in phase space.

We note that analysis of $(z,\xi)\to\hat u^\eps(z,\xi)e^{\frac{i}{\eps k_0}z|\xi|^2}$ has been carried out in a general paraxial model in \cite{bal2011asymptotics}. The convergence as $\eps\to0$ to a limit $\hat\phi$ such that $\hat\phi-\E\hat\phi$ is complex Gaussian is then established using diagrammatic expansions.

\medskip

As we indicated in the introduction section, the kinetic regime of the scintillation scaling for the It\^o-Schr\"odinger paraxial model for broad beams is analyzed in detail in a series of papers  \cite{garnier2014scintillation,garnier2016fourth,garnier2018noninvasive,garnier2022scintillation}. Their theory focuses on the analysis of the partial differential equations \eqref{eq:mupqF} for first moments, second moments  $p=q=1$, and fourth moments $p=q=2$ and their limits as $\eps\to0$ when augmented with initial conditions of the form \eqref{eq:u0} or \eqref{eq:u0plane} with $\beta=1$. In particular, they prove that (in our notation) for the {\em physical} variables $X=(x_1,x_2)$ and $Y=(y_1,y_2)$
\[
  \mu^\eps_{2,2}(X,Y)= \mu^\eps_{1,1}(x_1,y_1)\mu^\eps_{1,1}(x_2,y_2) + \mu^\eps_{1,1}(x_1,y_2)\mu^\eps_{1,1}(x_2,y_1) - \mu^\eps_{1,0}(x_1)\mu^\eps_{1,0}(x_2)\mu^\eps_{0,1}(y_1)\mu^\eps_{0,1}(y_2) + o(1)
\]
with an error term $o(1)\to0$ in the uniform sense. The above estimate is readily derived from Theorem \ref{thm:kinetic}. One of our main contributions is to show that this error term up to log factors is of order $O(\eps)$ when $d\geq2$, which allows us to consider the diffusive regime, and to extend the analysis to arbitrary moments $\mu^\eps_{p,q}$. While the derivations are slightly different, many of our convergence results in section \ref{sec:firstmoments} appear in a similar form in \cite{garnier2014scintillation,garnier2016fourth,garnier2018noninvasive,garnier2022scintillation}.  This allows these works to show that in the scintillation regime $\eps\to0$ and formally sending the rescaled distance $z$ to infinity, then the scintillation index \eqref{eq:scintindex} indeed converges to $1$ as obtained in Corollary \ref{cor:scint}. This result provides significant evidence for the complex Gaussianity of the limiting process we obtained in Theorem \ref{thm:diffusive} and the corresponding exponential distribution for the energy density (irradiance) as shown in Corollary \ref{cor:scint}. We refer to these aforementioned works for many explicit expressions for second moments and scintillation indices associated to $u^\eps$ in different contexts. See also \cite{hislop2019transport} for an analysis of second moments of similar equations beyond the scintillation regime.

\paragraph{Natural extensions.} The results stated in Theorems \ref{thm:kinetic} and \ref{thm:diffusive} show that $u^\eps$ converges to a complex Gaussian process after appropriate rescaling for {\em coherent} incident beams $u_0$. However, the theory presented in section \ref{sec:highermoments} is more general.  In particular,  the estimates \eqref{eq:bdmueps} for solutions of \eqref{eq:mupqgal} in Theorem \ref{thm:mupqphys} hold for arbitrary incident conditions at $z=0$, which do not need to be of the form given in \eqref{eq:mupqpde}. The theory of this paper allows us to analyze partially coherent beams and obtain regimes where the scintillation index is significantly smaller than unity, as a violation of the results of  Theorems \ref{thm:kinetic} and \ref{thm:diffusive}{\color{black}~\cite{bal2025long}}.  The analysis of the partial differential equations satisfied by the statistical moments also extends to random media $B(z,x;t)$ that depend slowly on a macroscopic parameter $t$ and allow us to consider time-averaged irradiance measurements. {\color{black} Also see} \cite{garnier2018noninvasive,garnier2022scintillation} for analyses of the scintillation index in these physical settings.
As supported by extensive experimental studies and optimization strategies~\cite{gbur2014partially, nelson2016scintillation, nair2023scintillation}, partially coherent beams are widely acknowledged to have a superior performance in mitigating the effects of turbulence. 

We note also that the limiting diffusive model of Theorem \ref{thm:diffusive} required the spatial correlation function $R(x)$ to be sufficiently smooth at $x=0$. Such an assumption is not necessary to establish convergence of finite moments in Theorem \ref{thm:kinetic} or tightness in Theorem \ref{thm:tightness}. It is possible that an algebraic decay $\hat R(k)\sim |k|^{-\gamma}$ as $|k|\to\infty$ leads to another limit than the standard diffusion limit given in section \ref{sec:firstmoments}. Such a regime will be analyzed elsewhere. 
 
When the medium has long range correlations, an It\^{o}-Schr\"{o}dinger approximation with the standard Brownian motion as discussed here is no longer valid. However we note that it is possible that the results presented here can be extended to cases as in~\cite{borcea2023paraxial}, where a similar limiting model has been obtained after an appropriate random phase transformation.

\section*{Acknowledgments} This work was supported in part by NSF Grant DMS-2306411. {\color{black} We also thank the two anonymous referees for their valuable comments in improving the presentation of this paper.}
\section*{Data Availability Statement} Data sharing is not applicable to this article as no datasets were generated or analyzed during the current study.

\bibliographystyle{siam}
\bibliography{Reference}

\begin{thebibliography}{10}

\bibitem{andrews2001laser}
{\sc L.~C. Andrews, R.~L. Phillips, and C.~Y. Hopen}, {\em Laser Beam
  Scintillation with Applications}, vol.~99, SPIE press, 2001.

\bibitem{bailly1996parabolic}
{\sc F.~Bailly, J.-F. Clouet, and J.-P. Fouque}, {\em Parabolic and {G}aussian
  white noise approximation for wave propagation in random media}, SIAM Journal
  on Applied Mathematics, 56 (1996), pp.~1445--1470.

\bibitem{bal2004self}
{\sc G.~Bal}, {\em On the self-averaging of wave energy in random media},
  Multiscale Modeling \& Simulation, 2 (2004), pp.~398--420.

\bibitem{B-WM-05}
\leavevmode\vrule height 2pt depth -1.6pt width 23pt, {\em Kinetics of scalar
  wave fields in random media}, Wave Motion, 43(2) (2005), pp.~132--157.

\bibitem{bal2015limiting}
{\sc G.~Bal and Y.~Gu}, {\em Limiting models for equations with large random
  potential: A review}, Communications in Mathematical Sciences, 13 (2015),
  pp.~729--748.

\bibitem{BKR-liouv}
{\sc G.~Bal, T.~Komorowski, and L.~Ryzhik}, {\em {Self-averaging of Wigner
  transforms in random media}}, Comm. Math. Phys., 242(1-2) (2003),
  pp.~81--135.

\bibitem{BKR-KRM-10}
\leavevmode\vrule height 2pt depth -1.6pt width 23pt, {\em {Kinetic limits for
  waves in random media}}, Kinetic Related Models, 3(4) (2010), pp.~529 -- 644.

\bibitem{bal2011asymptotics}
\leavevmode\vrule height 2pt depth -1.6pt width 23pt, {\em Asymptotics of the
  solutions of the random {S}chr{\"o}dinger equation}, Archive for Rational
  Mechanics and Analysis, 200 (2011), pp.~613--664.

\bibitem{bal2025long}
{\sc G.~Bal and A.~Nair}, {\em Long distance propagation of light in random
  media with partially coherent sources}, Waves in Random and Complex Media,
  (2025), pp.~1--33.

\bibitem{bal2024long}
\leavevmode\vrule height 2pt depth -1.6pt width 23pt, {\em Long distance
  propagation of wave beams in paraxial regime}, Multiscale Modeling \&
  Simulation, 23 (2025), pp.~1209--1235.

\bibitem{bal2025splitting}
\leavevmode\vrule height 2pt depth -1.6pt width 23pt, {\em Splitting algorithms
  for paraxial and {I}t\^{o}-{S}chr\"{o}dinger models of wave propagation in
  random media}, arXiv preprint arXiv:2503.00633,  (2025).

\bibitem{bal2010dynamics}
{\sc G.~Bal and O.~Pinaud}, {\em Dynamics of wave scintillation in random
  media}, Communications in Partial Differential Equations, 35 (2010),
  pp.~1176--1235.

\bibitem{billingsley2017probability}
{\sc P.~Billingsley}, {\em Probability and Measure}, John Wiley \& Sons, 2017.

\bibitem{borcea2023paraxial}
{\sc L.~Borcea, J.~Garnier, and K.~S{\o}lna}, {\em Paraxial wave propagation in
  random media with long-range correlations}, SIAM Journal on Applied
  Mathematics, 83 (2023), pp.~25--51.

\bibitem{carminati2021principles}
{\sc R.~Carminati and J.~C. Schotland}, {\em Principles of Scattering and
  Transport of Light}, Cambridge University Press, 2021.

\bibitem{dawson1984random}
{\sc D.~A. Dawson and G.~C. Papanicolaou}, {\em A random wave process}, Applied
  Mathematics and Optimization, 12 (1984), pp.~97--114.

\bibitem{erdHos2008quantum}
{\sc L.~Erd{\H{o}}s, M.~Salmhofer, and H.-T. Yau}, {\em Quantum diffusion of
  the random {S}chr{\"o}dinger evolution in the scaling limit}, Acta Math, 200
  (2008), pp.~211--277.

\bibitem{erdHos2000linear}
{\sc L.~Erd{\H{o}}s and H.-T. Yau}, {\em {Linear Boltzmann equation as the weak
  coupling limit of a random Schr{\"o}dinger equation}}, Communications on Pure
  and Applied Mathematics, 53 (2000), pp.~667--735.

\bibitem{fouque2007wave}
{\sc J.-P. Fouque, J.~Garnier, G.~Papanicolaou, and K.~S{\o}lna}, {\em Wave
  Propagation and Time Reversal in Randomly Layered Media}, vol.~56, Springer
  Science \& Business Media, 2007.

\bibitem{fouque1998forward}
{\sc J.-P. Fouque, G.~Papanicolaou, and Y.~Samuelides}, {\em Forward and
  {M}arkov approximation: the strong-intensity-fluctuations regime revisited},
  Waves in Random Media, 8 (1998), p.~303.

\bibitem{furutsu1972statistical}
{\sc K.~Furutsu}, {\em Statistical theory of wave propagation in a random
  medium and the irradiance distribution function}, Journal of the Optical
  Society of America, 62 (1972), pp.~240--254.

\bibitem{furutsu1973spot}
{\sc K.~Furutsu and Y.~Furuhama}, {\em Spot-dancing and related saturation
  phenomenon of irradiance scintillation of optical beams in a random medium},
  Optica Acta: International Journal of Optics, 20 (1973), pp.~707--719.

\bibitem{garnier2018multiscale}
{\sc J.~Garnier}, {\em Multiscale analysis of wave propagation in random
  media}, in Proceedings of the International Congress of Mathematicians: Rio
  de Janeiro 2018, World Scientific, 2018, pp.~2877--2902.

\bibitem{garnier2009coupled}
{\sc J.~Garnier and K.~S{\o}lna}, {\em Coupled paraxial wave equations in
  random media in the white-noise regime}, The Annals of Applied Probability,
  (2009), pp.~318--346.

\bibitem{garnier2014scintillation}
\leavevmode\vrule height 2pt depth -1.6pt width 23pt, {\em Scintillation in the
  white-noise paraxial regime}, Communications in Partial Differential
  Equations, 39 (2014), pp.~626--650.

\bibitem{garnier2016fourth}
\leavevmode\vrule height 2pt depth -1.6pt width 23pt, {\em Fourth-moment
  analysis for wave propagation in the white-noise paraxial regime}, Archive
  for Rational Mechanics and Analysis, 220 (2016), pp.~37--81.

\bibitem{garnier2018noninvasive}
\leavevmode\vrule height 2pt depth -1.6pt width 23pt, {\em Noninvasive imaging
  through random media}, SIAM Journal on Applied Mathematics, 78 (2018),
  pp.~3296--3315.

\bibitem{garnier2022scintillation}
\leavevmode\vrule height 2pt depth -1.6pt width 23pt, {\em Scintillation of
  partially coherent light in time-varying complex media}, Journal of the
  Optical Society of America A, 39 (2022), pp.~1309--1322.

\bibitem{gbur2014partially}
{\sc G.~Gbur}, {\em Partially coherent beam propagation in atmospheric
  turbulence}, Journal of the Optical Society of America A, 31 (2014),
  pp.~2038--2045.

\bibitem{goodman1976some}
{\sc J.~W. Goodman}, {\em Some fundamental properties of speckle}, Journal of
  the Optical Society of America, 66 (1976), pp.~1145--1150.

\bibitem{gu2021gaussian}
{\sc Y.~Gu and T.~Komorowski}, {\em Gaussian fluctuations from random
  {S}chr{\"o}dinger equation}, Communications in Partial Differential
  Equations, 46 (2021), pp.~201--232.

\bibitem{hernandez2024quantum}
{\sc F.~Hern{\'a}ndez}, {\em Quantum diffusion via an approximate semigroup
  property}, Probability and Mathematical Physics, 5 (2024), pp.~1039--1184.

\bibitem{hislop2019transport}
{\sc P.~D. Hislop, K.~Kirkpatrick, S.~Olla, and J.~Schenker}, {\em Transport of
  a quantum particle in a time-dependent white-noise potential}, Journal of
  Mathematical Physics, 60 (2019).

\bibitem{ishimaru1978wave}
{\sc A.~Ishimaru}, {\em Wave Propagation and Scattering in Random Media},
  vol.~2, Academic Press New York, 1978.

\bibitem{kunita1997stochastic}
{\sc H.~Kunita}, {\em Stochastic Flows and Stochastic Differential Equations},
  vol.~24, Cambridge University Press, 1997.

\bibitem{LS-ARMA-07}
{\sc J.~Lukkarinen and H.~Spohn}, {\em Kinetic limit for wave propagation in a
  random medium}, Archive for Rational Mechanics and Analysis, 183(1) (2007),
  pp.~93--162.

\bibitem{miyahara1982stochastic}
{\sc Y.~Miyahara}, {\em Stochastic evolution equations and white noise
  analysis}, Ottawa: Carleton Mathematical Lecture Notes No. 42, 1982.

\bibitem{nair2023scintillation}
{\sc A.~Nair, Q.~Li, and S.~N. Stechmann}, {\em Scintillation minimization
  versus intensity maximization in optimal beams}, Optics Letters, 48 (2023),
  pp.~3865--3868.

\bibitem{nelson2016scintillation}
{\sc C.~Nelson, S.~Avramov-Zamurovic, O.~Korotkova, S.~Guth, and
  R.~Malek-Madani}, {\em Scintillation reduction in pseudo {M}ulti-{G}aussian
  {S}chell {M}odel beams in the maritime environment}, Optics Communications,
  364 (2016), pp.~145--149.

\bibitem{papanicolaou2007self}
{\sc G.~Papanicolaou, L.~Ryzhik, and K.~S{\o}lna}, {\em Self-averaging from
  lateral diversity in the {I}t{\^o}--{S}chr{\"o}dinger equation}, Multiscale
  Modeling \& Simulation, 6 (2007), pp.~468--492.

\bibitem{reed1962moment}
{\sc I.~Reed}, {\em On a moment theorem for complex {G}aussian processes}, IRE
  Transactions on Information Theory, 8 (1962), pp.~194--195.

\bibitem{RPK-WM}
{\sc L.~Ryzhik, G.~C. Papanicolaou, and J.~B. Keller}, {\em {Transport
  equations for elastic and other waves in random media}}, Wave Motion, 24(4)
  (1996), pp.~327--370.

\bibitem{sheng1990scattering}
{\sc P.~Sheng}, {\em Scattering and Localization of Classical Waves in Random
  Media}, vol.~8, World Scientific, 1990.

\bibitem{Spohn}
{\sc H.~Spohn}, {\em Derivation of the transport equation for electrons moving
  through random impurities}, Journal of Statistical Physics, 17(6) (1977),
  pp.~385--412.

\bibitem{valley1976application}
{\sc G.~C. Valley and D.~L. Knepp}, {\em Application of joint {G}aussian
  statistics to interplanetary scintillation}, Journal of Geophysical Research,
  81 (1976), pp.~4723--4730.

\bibitem{yakushkin1978moments}
{\sc I.~G. Yakushkin}, {\em Moments of field propagating in randomly
  inhomogeneous medium in the limit of saturated fluctuations}, Radiophysics
  and Quantum Electronics, 21 (1978), pp.~835--840.

\end{thebibliography}

\end{document}